\documentclass[10pt,a4paper]{article}
\usepackage[margin=1in]{geometry}
\usepackage{graphicx}
\usepackage[colorlinks=true, allcolors=black]{hyperref}
\usepackage{amsmath,amsthm,amsfonts,amssymb,amscd,mathrsfs}
\usepackage{enumerate}
\usepackage{mathrsfs}
\usepackage{xcolor}
\usepackage{graphicx}
\usepackage{hyperref}
\usepackage{enumitem}
\usepackage{bbm}
\usepackage{float}
\usepackage{fancyvrb}
\usepackage{booktabs}
\usepackage{subcaption}
\usepackage{algpseudocode,algorithmicx,algorithm}
\usepackage{etoolbox}
\patchcmd{\abstract}{\small}{}{}{}
\usepackage{arydshln}

\def\algbackskip{\hskip-\ALG@thistlm}
\newtheorem{theorem}{Theorem}[section]
\newtheorem{definition}[theorem]{Definition}
\newtheorem{corollary}[theorem]{Corollary}

\newtheorem{lemma}[theorem]{Lemma}

\newtheorem{assumption}[theorem]{Assumption}
\newtheorem{problem}[theorem]{Problem}
\newtheorem{example}[theorem]{Example}
\newtheorem*{theorem*}{Problem}
\newtheorem{remark}[theorem]{Remark}

\DeclareMathOperator{\rank}{rank}
\usepackage{mathrsfs}
\usepackage{mathtools}

\newcommand{\A}{{\mathcal A}}

 % vec 算子

\sloppy
\allowdisplaybreaks[4]
\raggedbottom

\setlength{\parskip}{0.2em}

\title{An Efficient Data-Driven Framework for Linear Quadratic Output Feedback Control\footnote{Corresponding author: Yuan-Hua Ni, and Yiqin Yang}
}
\date{\today}
\author{Jun Xie\footnote{College of Artificial Intelligence, Nankai University, China. Email: xiejun@mail.nankai.edu.cn}~~~~  Yuan-Hua Ni\footnote{College of Artificial Intelligence, and Tianjin Key Laboratory of Interventional Brain-Computer Interface and Intelligent Rehabilitation, Nankai University, China. Email: yhni@nankai.edu.cn}~~~~Yiqin Yang\footnote{The Key Laboratory of Cognition and Decision Intelligence for Complex Systems, Institute of Automation, Chinese Academy of Sciences, China. Email: yiqin.yang@ia.ac.cn}~~~~Bo Xu\footnote{The Key Laboratory of Cognition and Decision Intelligence for Complex Systems, Institute of Automation, Chinese Academy of Sciences, China. Email: xubo@ia.ac.cn}}
\begin{document}

\maketitle

\begin{abstract}
Linear quadratic regulator with unmeasurable states and unknown system matrix parameters better aligns with practical scenarios. 
However, for this problem, balancing the optimality of the resulting controller and the leniency of the algorithm's feasibility conditions remains a non-trivial challenge, as no well-established general method has yet been developed to address this trade-off. To address this gap, this study first develops a comprehensive theoretical framework for state parameterization that equivalently substitutes for unknown states. By analyzing the controllability of consistent systems satisfied by substitute states, this framework quantifies the capability of substitute state data matrices to parameterize unknown closed-loop systems and output feedback controllers, thereby constructing a modified state parameterization form that meets the complete data parameterization condition of Willems’ Fundamental Lemma.  Leveraging this framework, this study proposes efficient model-free off-policy policy iteration and value iteration algorithms with theoretical guarantees to solve for the optimal output feedback controller, while eliminating reliance on the traditional least squares numerical solution paradigm.  Compared with existing studies, particularly for multi-output problems where existing model-free reinforcement learning algorithms may fail, the proposed method removes redundant information in substitute states and the additional full row rank condition on regression matrices, thereby ensuring the solution of optimal output feedback controllers equivalent to optimal state feedback controllers for multi-output systems. Furthermore, this study pioneers a comprehensive and highly scalable theoretical analysis of state parameterization from a data-driven viewpoint, and the proposed algorithms exhibit significant advantages in implementation conditions, data demand, unknown handling, and convergence speed.
   
   \textbf{Keywords:}  direct data-driven control, reinforcement learning, linear-quadratic optimal control, output feedback, state parameterization.
\end{abstract}

%%%%%%%%%%%%%%%%%%%%%%%%%%%%%%%%%%%%%%%%%%%

\section{Introduction}\label{section1}
Linear quadratic regulator (LQR) is a foundational optimal control method that enables stable and optimal control of linear systems by optimizing a quadratic performance index under measurable states. It plays a critical role in ensuring precise and efficient process control across key fields, such as robotic control \cite{Robot}, aircraft control \cite{Deng-CT-output}, and autonomous driving \cite{Autonomous}. However, in practice, LQR faces two realities that are more aligned with real-world applications yet are inherently challenging: 1) the matrix parameters of linear systems are seldom known a priori; 2) system states often cannot be directly measured due to cost-effectiveness and engineering practicality constraints (for instance, only output data is transmitted in remote network transmission to protect sensitive information \cite{Chen-LQG}). These realities underscore the significant practical value of research on model-free output feedback LQR. 
Moreover, there exist key theoretical hurdles that need to be addressed. First, since system states are unmeasurable, the state-feedback-related Riccati equation that characterizes the optimal LQR controller becomes unsolvable. In such cases, even if controllers are derived via static output feedback (SOF) or dynamic output feedback under the separation principle \cite{Separation}, they do not necessarily yield the optimal solution to LQR problem \cite{Sof,dLQR,Deng-DT-output}. Furthermore, unknown system matrix parameters add a further layer of difficulty to the theoretical solution. In this combined scenario, independently designing state observers for dynamic output feedback controllers becomes extremely challenging. Notably, while some model-free algorithms adopt policies that redundantly utilize output data and can yield optimal solutions, their convergence conditions for multi-output (MO) systems remain overly strict, even to the point of being unachievable.

Some existing data-driven methods have attempted to address this LQR problem with unmeasurable states and unknown system parameters.
However, first-principles modeling or system identification is prone to complex operations and high costs; furthermore, the lack of a separation principle between system identification and model-based controller design may result in suboptimal control policies \cite{Ivan-data-driven}. Utilizing policy optimization algorithms in reinforcement learning (RL) to solve optimal SOF or dynamic controllers encounters disconnected feasible regions and complex optimization structures, making it difficult to guarantee algorithmic optimality \cite{Sof,dLQR}. While calculating substitute vectors for unmeasurable states and directly applying them to RL-based policy iteration (PI) or value iteration (VI) algorithms can ensure the optimality, such approaches remain largely infeasible for MO problems, which stems from their neglect of the intrinsic structure among available data and reliance on least squares (LS) numerical methods \cite{DT-output,Deng-DT-output}. Within the behavioral systems framework \cite{Ivan-Behavioral}, characterizing unknown systems and controllers via Willems’ Fundamental Lemma \cite{Willems} enables more comprehensive integration of data information to overcome limitations of traditional methods; however, to date, such techniques still struggle to extend to MO scenarios \cite{De-Formulas}.

To address these gaps, this paper proposes general and efficient direct data-driven algorithms that address the potential failure of the aforementioned RL algorithms in MO problems and extend the applicability of Willems’ Fundamental Lemma to MO settings, while ensuring controller optimality.
More specifically, to obtain an efficient substitute form for unmeasurable states, we first develop a comprehensive and highly scalable state parameterization framework along with its corresponding control theory. Within this framework, we revisit two types of equivalence relationships between substitute state vectors (generated from input-output data) and unmeasurable states, and focus on analyzing the capacity of the substitute state data matrices to parameterize unknown closed-loop systems and output feedback controllers. This analysis yields a key conclusion: the data parameterization ability is strongly correlated with the controllability of the consistent systems exhibited by the substitute states. 
Building on this conclusion, we eliminate redundant information in substitute states and propose a modified state parameterization method that not only ensures complete data parameterization but is also guaranteed to apply to MO problems, thereby enabling the feasibility of Willems’ Fundamental Lemma in MO settings.
Second, utilizing the substitute state data matrix generated via the proposed state parameterization, we re-represents the Bellman iteration equation and further proposes off-policy PI and VI algorithms for solving the output feedback LQR problem. These algorithms exhibit higher efficiency in terms of data demand, handling of unknowns, and convergence speed. 
Moreover, as we leverage the data matrix to fully exploit the representation capability of Willems’ Fundamental Lemma, there is no need to convert the iteration equations into LS problems for solution, which avoids additional data requirements for regression matrices—requirements that are difficult to satisfy in MO settings.

\subsection{Related Works}\label{section1_1}		
\textbf{(i) Behavioral Systems Theory.}	
Behavioral systems theory \cite{Ivan-Behavioral} focuses on system trajectories and eliminates the need for a state-space representation, with each input-output trajectory termed a ``behavior''. As the core result of this theory, Willems' Fundamental Lemma demonstrates that for a controllable discrete-time linear time-invariant (DT-LTI) system, any valid trajectory can be expressed as a linear combination of time-shifted persistently exciting (PE) measured trajectories \cite{Willems}. This lemma has found extensive applications in autonomous driving \cite{DeeP}, power grid \cite{Ivan-data-driven} and quadrotor control \cite{DeePc}.
%Inspired by Willems' Fundamental Lemma, the work in \cite{New-Per} shows that the solvability condition of model-free LQR is equivalent to the identifiability condition. Meanwhile, 
Leveraging subspace relationships, the research in \cite{De-Formulas} demonstrates that under a specific rank condition, closed-loop systems and controllers can be parameterized via input-state data. For single-output (SO) systems, sufficiently long historical input-output trajectories can directly replace unknown states without altering the rank condition \cite{De-Formulas}, though this conclusion cannot be readily extended to MO systems. To address MO problems, the work in \cite{Notes-MO} proposes an intuitive data matrix construction method for characterizing unknown systems by leveraging Gauss transformations. However, this approach lacks comprehensive control-theoretic foundations and is restricted to DT systems.
%; this paper seeks to address this limitation by presenting a more universal and comprehensive method and theory.

\textbf{(ii) Model-Free State Feedback LQR.} 
Model-free state feedback results serve as foundational and inspirational references for model-free output feedback LQR problems. %Studies in \cite{Low-SDP, Robustness-SDP} reformulate model-free state feedback LQR as linear matrix inequalities (LMIs) and solve them non-iteratively via semi-definite programming (SDP). 
In contrast to linear matrix inequalities methods \cite{Low-SDP, Robustness-SDP}, more ``brute-force'' RL approaches may offer better computational efficiency. The work in \cite{LQR-PO} reformulates LQR as an optimization problem and establishes the gradient domination condition, a key criterion for the global convergence of policy gradient. Building on this result, the research in \cite{Deepo} applies Willems' Fundamental Lemma to propose the data-enhanced policy optimization algorithm for adaptively solving state feedback LQR problems. 
Studies in \cite{PI-noises} and \cite{VI-noises} develop model-free algorithms for stochastic state feedback LQR via PI and VI, respectively. Furthermore, the research in \cite{Efficient-Q} integrates Willems' Fundamental Lemma to propose an off-policy Q-learning algorithm for state feedback LQR, characterized by fewer unknowns and higher computational efficiency.

\textbf{(iii) Output Feedback LQR.}
Regarding model-based results, the work in \cite{Guass-output} provides the necessary and sufficient condition for solving the optimal SOF controller, and the work in \cite{dLQR} concludes that the cost of dynamic output feedback controllers varies with similarity transformations. Owing to the complex topological structure of output feedback LQR \cite{Topo-MO}, gradient domination conditions are hard to satisfy for both SOF and dynamic output feedback, resulting in a lack of theoretical guarantees for the global convergence of policy gradient \cite{dLQR,Sof}. 
An alternative approach is to reconstruct or estimate states, which can also be applied to model-free problems. For instance, the research in \cite{Lewis-IOH} uses historical input-output trajectories as a substitute for states in PI and VI; studies in \cite{DT-output,CT-output} employ data generated by a user-defined internal model to replace states in off-policy PI and VI, though the iteration process may be affected by observation errors. This issue is addressed in \cite{Deng-DT-output,Deng-CT-output} through incorporating observation error replication into the internal model. 
%The work in \cite{State-Para} further shows that the former state parameterization method is a special case of the latter, with the state parameterization matrix requiring full row rank to ensure equivalence between state feedback and output feedback. %%%删？
However, the aforementioned value-based iterative methods focus more on algorithm design and fail to address MO problems comprehensively; moreover, as these algorithms rely on LS, they may require more stringent data conditions and incur higher computational costs. This paper aims to address these issues; meanwhile, the results obtained may contribute to the implementation of policy optimization in output feedback LQR \cite{ZHAO}.

\subsection{Comparisons and Contributions}\label{section1_2}
This paper investigates the data-driven output feedback LQ optimal control.
Compared with existing results, the main contributions of this paper are as follows.

\begin{itemize}
	\item[(i)] A general framework for learning optimal output feedback controllers is established using only input-output data. This framework exhibits high applicability and scalability, stemming from three key advantages: first, it eliminates additional data requirements; second, it is applicable to both SO and MO problems; third, it can be easily extended to other optimal control problems such as continuous-time settings.
	
	This framework obviates the need for multiple attempts to ensure the full row rank property of the regression matrix in \cite{Deng-DT-output} and is guaranteed to solve MO problems optimally. It also addresses the challenge of extending the state feedback results in \cite{Efficient-Q} and \cite{VI-noises} to output feedback scenarios.
	
	\item[(ii)] A generalized state parameterization method is proposed along with the corresponding control-theoretic support, demonstrating that the controllability of consistent systems is pivotal to the full characterization of output feedback controllers using input-output data. This state parameterization method enables seamless integration with Willems' Fundamental Lemma and RL algorithms, making it applicable to both SO and MO problems. This theoretical support not only facilitates efficient data-based characterization of closed-loop systems and output feedback controllers in noise-free settings but also guides the enhancement of algorithmic robustness under small noise perturbations. 
	
	Notably, the findings in \cite{Notes-MO} and the vector autoregressive framework with exogenous input (VARX) \cite{Varx} can be regarded as special cases of this work. In contrast, the proposed state parameterization framework is more general and supported by a comprehensive theoretical basis.
		
	\item[(iii)] Data-efficient off-policy PI and VI algorithms are proposed for solving the optimal output feedback controller of LQR problems in a model-free setting,
    breaking through the traditional paradigm of dependence on LS. Moreover, the corresponding complete proof system for stability, convergence, optimality, and robustness is established. 
	
	Compared with \cite{Deng-DT-output}, in terms of data utilization, the proposed algorithms relax the rank conditions on data matrices required for convergence in \cite{Deng-DT-output}, and require a smaller amount of data; in terms of computational efficiency, the proposed algorithms involve fewer unknown parameters and exhibit faster convergence rates.	
	
\end{itemize}

The rest of the paper is organized as follows.  Section \ref{section2} introduces the output feedback LQR problem, along with model-based LQR results and Willems' Fundamental Lemma.  Section \ref{section3} presents the generalized state parameterization method and the corresponding control theory, and establishes results on data-parameterized output feedback controllers. Section \ref{section4} proposes efficient PI and VI algorithms that are applicable to MO problems.  Section \ref{section5} gives the robustness analysis and more detailed discussions. Section \ref{section6} demonstrates the feasibility and advantages of the proposed algorithms through numerical experiments. Finally, the paper ends with the conclusion in Section \ref{section7}.

\textbf{Notations.}
$\mathbb{R}$ denotes the set of real numbers. $\mathbb{S}^n$ denotes the set of $n\times n$ symmetric matrices; $\mathbb{S}^n_{++}$ ($\mathbb{S}^n_+$) is defined as the set of $n\times n$ positive (semi-)definite matrices. $A\succ B(A\succeq B)$ means that matrix $A-B$ is positive (semi-)definite; $A^{\frac{1}{2}}$ denotes the unique symmetric positive semi-definite square root of $A\in\mathbb{S}_{+}^{n}$. $I_n$ is defined as a $n\times n$ identity matrix, and $\mathbf{0}$ denotes a zero vector or matrix with proper dimension. Let $\Vert\cdot\Vert$ denote the vector Euclidean norm or matrix spectral norm. 
For matrix $A$, 
%$\rank(A)$ denotes its rank; $\rho(A)$ denotes its spectral radius; $A^\top$ and $A^\dagger$ denote its transpose and the Moore-Penrose pseudo-inverse, respectively; $\sigma_{\min}(A)$ and $\sigma_{\max}(A)$ represent its minimum and maximum singular values, respectively; $\mathrm{im}(A)$ and $\mathrm{(left)ker}(A)$ are defined as its image space and the (left-)kernel space, respectively.
$\rank(A)$, $\rho(A)$, $A^\top$, $A^\dagger$, $\sigma_{\min}(A)$,  $\sigma_{\max}(A)$, $\mathrm{im}(A)$, and $\mathrm{(left)ker}(A)$ denote its rank, spectral radius, transpose, Moore-Penrose pseudo-inverse, minimum singular value, maximum singular value, image space, and (left-)kernel space, respectively.
For square matrix $B$, $\mathrm{det}(B)$ and $\mathrm{adj}(B)$ denote its determinant and adjugate matrix, respectively; $\lambda(B)$ is defined as the vector consisting of all the eigenvalues of $B$. For real symmetric matrix $C$, $\lambda_{\min}(C)$ and $\lambda_{\max}(C)$ denote its minimum and maximum eigenvalues, respectively. 
Define $\mathrm{diag}(A,B)$ as the block diagonal matrix with main matrix blocks $A$ and $B$.
$\mathcal{Z}(\cdot)$ denotes the z-transform; $\mathcal{Z}^{-1}(\cdot)$ denotes the inverse z-transform. For $A=(a_1,\cdots,a_m)\in\mathbb{R}^{n\times m}$ and $B=(b_{ij})\in\mathbb{S}^n$, define $\mathrm{vec}(A):=(a_1^\top,\cdots, a_m^\top)^\top\in\mathbb{R}^{mn}$ and $\mathrm{vech}(B):=[b_{11},\sqrt{2}b_{12},\cdots,\sqrt{2}b_{1n},b_{22},\sqrt{2}b_{23},\cdots,\sqrt{2}b_{n-1,n},b_{nn}]^\top$.
The notation $\otimes$ is defined as the Kronecker product.
For a signal sequence $\{u_t\}$, define its stacked window vector as $u_{[i,j]}:=[u_i^\top,u_{i+1}^\top,\cdots,u_{j}^\top]^\top$ with $i<j$. Accordingly, a Hankel matrix of depth $N$ is defined as $\mathcal{H}_{N}(u_{[0,T-1]}):=[u_{[0,N-1]},u_{[1,N]},\cdots,u_{[T-N,T-1]}]$ with positive integers $N,T$ and $N<T$. A standalone triple $(\mathcal{A},\mathcal{B},\mathcal{C})$ represents a DT-LTI system.
Define
$\mathcal{R}_{N}(A,B):=[A^{N-1}B,\cdots,AB,B]$, $\mathcal{O}_N(A,C):=[C^\top,(CA)^\top,\cdots,(CA^{N-1})^\top]^\top$, and
$$\mathcal{T}_N(A,B,C):=\begin{bmatrix}
	\mathbf{0}&&&\\
	CB&\mathbf{0}&&\\
	\vdots&\ddots&\ddots&\\
	CA^{N-2}B&\cdots&CB&\mathbf{0}
\end{bmatrix};$$
the subscript $N$ will be omitted when the context is clear, and the notations will be simplified to $\mathcal{R}(A,B)$, $\mathcal{O}(A,C)$ and $\mathcal{T}(A,B,C)$.

%%%%%%%%%%%%%%%%%%%%%%%%%%%%%%%%%%%%%%%%%%%%%%%%%%%%%%%%

\section{Problem Formulation and Preliminaries}\label{section2}
This section introduces the problem formulation, the results of model-based LQR, and the outcomes of direct data-driven control.

\subsection{Problem Formulation}\label{section2_1}
Consider a DT-LTI system of the form
\begin{equation}\label{system}
	\begin{aligned}
		x_{t+1}&=Ax_{t}+Bu_{t},\quad y_{t}=Cx_{t}
	\end{aligned}
\end{equation}
with unmeasurable state $x_{t}\in\mathbb{R}^n$, measurable input $u_{t}\in\mathbb{R}^m$ and output $y_{t}\in\mathbb{R}^p$. The constant matrices $A\in\mathbb{R}^{n\times n}$, $B\in\mathbb{R}^{n\times m}$ and $C\in\mathbb{R}^{p\times n}$ denote the dynamics matrix, input matrix and output matrix, respectively, with $p\leq n$. Without loss of generality, it is assumed that $C$ has full row rank, otherwise, the output contains redundant information. %%%后半句删？
It is necessary to define an admissible controller set $\mathcal{U}_{ad}$ based on the adopted controller structure to guarantee the stability of the target controller.

Define an infinite horizon quadratic cost functional as
\begin{equation}\label{J}
	\begin{aligned}
		J(x_{0}, u)&=\sum_{t=0}^{\infty}c(y_{t},u_{t})=\sum_{t=0}^{\infty}\left(y_{t}^\top Qy_{t}+u_{t}^\top Ru_{t}\right)
	\end{aligned}
\end{equation}
with constant cost weighting matrices $Q\in\mathbb{S}^{p}_{+}$ and $R\in\mathbb{S}^{m}_{++}$. The function $c(y_{t},u_{t})$ denotes the one-step cost when using the input $u_t$ at state $x_t$. Additionally, $Q_x:=C^\top QC\in\mathbb{S}^{n}_{+}$ can be expressed as the state cost weighting matrix.

This paper aims to address the model-free output feedback LQR problem, which is described as follows.
\begin{problem}\label{P1}
	Considering system (\ref{system}), where $A$, $B$, $C$ are unknown and states $\{x_t\}$ are unmeasurable, find an optimal control policy $u^*$ that satisfies
	\begin{equation}\label{pmin}
		u^*=\arg\min_{u\in\mathcal{U}_{ad}} J(x_{0},u).
	\end{equation}
\end{problem}

We introduce the following standard assumption throughout this paper.
\begin{assumption}[Controllability and observability]\label{as1}
	The pair $(A, B)$ is controllable, and the pairs $(A, C)$, $(A, Q^{\frac{1}{2}})$ are observable.
\end{assumption}

\begin{remark}
	In general, the stabilizability of $(A,B)$ suffices for the existence of optimal feedback controller \cite{LinearSys}. Since this paper employs the data parameterization method based on Willems' Fundamental Lemma, we assume $(A,B)$ is controllable here; this is consistent with key literature in this field \cite{Ivan-Behavioral,De-Formulas,Efficient-Q}. Relaxation of this assumption will be further discussed in subsequent sections.
	The observability of $(A,C)$ guarantees that the input-output data of system (\ref{system}) contains all the information about unknown states. From Lyapunov control theory, the observability of $(A,Q^{\frac{1}{2}})$ is a prerequisite for the convergence of iterative-based schemes \cite{con-AQ}. 			
\end{remark}

\subsection{Model-Based LQR}\label{section2_2}
This subsection assumes that system matrices $A$, $B$, $C$ are known.
 
\textbf{(i) State feedback controller.} When the states are measurable, i.e., $C=I_n$, Problem \ref{P1} is equivalent to finding the optimal state feedback controller of the form $u_t=K_xx_t$, where $K_x$ is referred to as the state feedback gain. Substituting this feedback law into system (\ref{system}), the closed-loop system is $x_{t+1}=(A+BK_x)x_{t}$, the admissible control set is $\mathcal{U}_{ad}=\{K_x\vert \rho(A+BK_x)<1\}$, and the cost functional (\ref{J}) can be rewritten as the quadratic form $J(x_0,K_x)=x_0^\top P^{K_x}x_0$, where $P^{K_x}\in\mathbb{S}^{n}_{++}$ is the unique positive definite solution to the following Lyapunov equation:
\begin{equation}\label{lyap}
	P^{K_x}=(A+BK_x)^{\top}P^{K_x}(A+BK_x)+Q_x+K_x^\top RK_x.
\end{equation}
The corresponding greedy optimal gain is
\begin{equation}\label{K}
	K_x=-(B^\top P^{K_x}B+R)^{-1}B^\top P^{K_x}A.
\end{equation}
Then, the optimal state feedback controller is uniquely characterized by the discrete algebraic Riccati equation (DARE), i.e., $u^*=K_x^*x$, $	K_x^*=-(B^\top P^{K_x^*}B+R)^{-1}B^\top P^{K_x^*}A$, where $P^{K_x^*}\in\mathbb{S}^{n}_{++}$ is the unique positive definite solution to the following DARE: 
\begin{equation}\label{DARE}
P^{K_x^*}=Q_x + A^{\top}P^{K_x^*}A- A^{\top}P^{K_x^*}B(R + B^{\top}P^{K_x^*}B)^{-1}B^{\top}P^{K_x^*}A.
\end{equation}
Directly solving the above nonlinear algebraic equation remains non-trivial. Instead, we can compute the cost parameter matrix $P^{K_x}$ of the current gain via equation (\ref{lyap}), then perform policy improvement via equation (\ref{K}). Starting from an initial stabilizing gain, model-based PI is guaranteed to converge to the optimal $P^{K_x^*}$ and $K^*_x$. Alternatively, based on fixed-point iteration theory and contractivity, model-based VI can be conducted via equation (\ref{DARE}) to approximate the optimal gain.

Furthermore, if $C\neq I_n$ and is invertible, the optimal output feedback controller can be solved analogously to the state feedback case. For an underdetermined $C$, the following two typical controllers are often employed.

\textbf{(ii) SOF controller.} For the SOF controller $u_t=K_yy_t$, it holds that $x_{t+1}=(A+BK_yC)x_t$, $\mathcal{U}_{ad}=\{K_y\vert \rho(A+BK_yC)<1\}$, and the cost functional (\ref{J}) can be rewritten as $J(x_0,K_y)=x_0^\top P^{K_y}x_0$, where $P^{K_y}\in\mathbb{S}^n_{++}$ is the solution to the Lyapunov equation 
$$P^{K_y}=(A+BK_yC)^\top P^{K_y}(A+BK_yC)+Q_x+C^\top K_y^\top RK_yC.$$ 
However, solving for the optimal SOF gain $K_y^*$ requires considering the non-full column rank issue of the output matrix $C$ \cite{Guass-output}. Specifically, there exists a matrix $G\in\mathbb{R}^{m\times n}$ such that $K_y^*C+(B^\top P^{K_y^*}B+R)^{-1}B^\top P^{K_y^*}A=G$, where $P^{K_y^*}$ is the solution to the following equation:
$$\begin{aligned}
	P^{K_y^*}=-A^{\top}P^{K_y^*}B(R + B^{\top}P^{K_y^*}B)^{-1}B^{\top}P^{K_y^*}A+Q_x + A^{\top}P^{K_y^*}A +G^\top (R + B^{\top}P^{K_y^*}B)G.
\end{aligned}$$
The matrix $G$ can be regarded as the pseudo-inverse error, and the above equation describing the optimal SOF controller may not have a solution. If the solution exists, the obtained $u=K_{y}^*y$ might still be a suboptimal solution to Problem \ref{P1}, as can be observed from the defining equation $K_y^\prime:=-(B^\top P^{K_y^*}B+R)^{-1}B^\top P^{K_y^*}A$ and the following equation \cite{Deng-DT-output}:
$$\begin{aligned}
	P^{K_y^*}-P^{K_x^*}=&(A+BK_y^\prime)^\top (P^{K_y^*}-P^{K_x^*})(A+BK_y^\prime)\\
	&+(K_x^*-K_y^\prime)^\top(R + B^{\top}P^{K_x^*}B)(K_x^*-K_y^\prime)+G^\top(R + B^{\top}P^{K_y^*}B)G,
\end{aligned}
$$
and given $R\succ0$, $P^{K_x^*}\succ0$,  $P^{K_y^*}\succeq P^{K_x^*}$ holds, which implies $J^*(x_0,K_y^*)\geq J^*(x_0,K_x^*)$.

\textbf{(iii) Dynamic output feedback controller.} Consider the full-order dynamic output feedback controller 
\begin{equation}\label{hatK}
	\hat{K}: \quad x^c_{t+1}=A_Kx^c_{t}+B_Ky_t,\quad u_t=C_Kx^c_{t},
\end{equation}
where $x^c_t\in\mathbb{R}^n$ is the state of the controller, $A_K\in\mathbb{R}^{n\times n}$, $B_K\in\mathbb{R}^{n\times p}$ and $C_K\in\mathbb{R}^{m\times n}$ are the controller parameter matrices. 
Denote $\epsilon_t:=x_t-x^c_t$ and $\hat{x}_t:=[x_t^{\top},\epsilon_t^\top]^\top$, then the corresponding closed-loop system is
$$\hat{x}_{t+1}=\begin{bmatrix}
		A+BC_K&-BC_K\\A-B_KC+BC_K-A_K&A_K-BC_K
\end{bmatrix}\hat{x}_t:=\mathcal{A}_{\hat{K}}\hat{x}_t;$$
the admissible control set is $\mathcal{U}_{ad}=\{A_K,B_K,C_K\vert \rho(\mathcal{A}_{\hat{K}})<1\}$, and the one step cost is
$$c(y_t,u_t)=\hat{x}_t^\top\begin{bmatrix}
		Q_x+C_K^\top R C_K&-C_K^\top R C_K\\-C_K^\top R C_K&C_K^\top R C_K
\end{bmatrix}\hat{x}_t:=\hat{x}_t^\top \mathcal{Q}_{\hat{K}}\hat{x}_t.$$ 
Therefore, the cost functional can be rewritten as $J(x_0,\hat{K})=\hat{x}_0^\top P^{\hat{K}}\hat{x}_0$, where $P^{\hat{K}}\in\mathbb{S}^{2n}_{+}$ is the solution to the Lyapunov equation $P^{\hat{K}}=\mathcal{A}_{\hat{K}}^\top P^{\hat{K}}\mathcal{A}_{\hat{K}}+\mathcal{Q}_{\hat{K}}$.
By the separation principle \cite{Separation}, the optimal dynamic output feedback controller satisfies the following standard form:
$$x^c_{t+1}=(A+BK_x-LC)x^c_{t}+Ly_t,\quad u_t=K_xx^c_{t},$$
where $K_x\in\mathbb{R}^{m\times n}$ and $L\in\mathbb{R}^{n\times p}$ are the state feedback and Luenberger observer gains rendering $A+BK_x$ and $A-LC$ Schur stable, respectively. However, an improperly chosen $L$ may induce observation errors, making the resulting optimal dynamic feedback controller a suboptimal solution to Problem \ref{P1}. Specifically, by partitioning $P^{\hat{K}}$ into four blocks according to the structure of $\hat{x}_t$, the Bellman equation gives that $P^{\hat{K}}_{11}$ corresponds to $P^{K_x}$, $P^{\hat{K}}_{12}=\mathbf{0}$, and $P^{\hat{K}}_{22}$ is strictly positive definite when $A-LC\neq\mathbf{0}$, satisfying $P^{\hat{K}}_{22}=(A-LC)^\top P^{\hat{K}}_{22}(A-LC)+K_x^\top(R+B^\top P^{\hat{K}}_{11}B)K_x$ \cite{Deng-DT-output}.
Therefore, $J(x_0,\hat{K}^*)\geq J(x_0,K_x^*)$. 

Based on the above analysis, Section \ref{section3} will adopt a modified output feedback form that is equivalent to state feedback. While this form is comparable to dynamic output feedback, it offers stronger optimality guarantees and simpler design than SOF or direct use of dynamic output feedback.

\subsection{Willems' Fundamental Lemma}\label{section2_3}
This subsection presents the results of direct data-driven control under state feedback. When $C=I_n$, and matrices $A$ and $B$ are unknown, it is assumed that noise-free input-state data can be collected from the real physical system or its simulations. Consider the state, input, subsequent state sequences of length $T$,
\begin{equation}\label{data_mar}
	\begin{aligned}
		X_0&:=[x_0, x_1, \cdots, x_{T-1}]\in\mathbb{R}^{n\times T},\\
		U_0&:=[u_0, u_1, \cdots, u_{T-1}]\in\mathbb{R}^{m\times T},\\
		X_1&:=[x_1, x_2, \cdots, x_{T}]\in\mathbb{R}^{n\times T}.\\
	\end{aligned}
\end{equation}
System (\ref{system}) can be equivalently represented as $X_1=AX_0+BU_0$. In the system identification method, we first need to estimates the unknown system matrices by solving the LS problem: $[\hat{A},\hat{B}]=\arg\min_{A,B}\Vert X_1-[A, B][X_0^\top,U_0^\top]^\top\Vert$. The minimum-norm solution $[\hat{A},\hat{B}]=X_1\left([X_0^\top,U_0^\top]^\top\right)^\dagger$ exists if and only if $[X_0^\top,U_0^\top]^\top$ has full row rank, which can be ensured by applying PE inputs. However, due to inherent limitations of system identification as described in Section \ref{section1}, we prefer to directly design the target controller from data. 

\begin{definition}[Persistently exciting (PE)]\label{PE}
	A signal sequence $u_{[0,T-1]}$ is PE of order $N$, if the Hankel matrix $\mathcal{H}_{N}(u_{[0,T-1]})$ has full row rank, where $T\geq (m+1)N-1$.
\end{definition}

The Hankel matrix constructed from PE input-state data enables non-parametric full characterization of system (\ref{system}).

\begin{lemma}[Willems' Fundamental Lemma \cite{Willems}]\label{willems}
	Assume the DT-LTI system (\ref{system}) is controllable. If the input sequence $u_{[0,T-1]}$ of system (\ref{system}) is a PE signal of order $n+N$, with the corresponding state sequence $x_{[0,T]}$ and output sequence $y_{[0,T-1]}$, then the following assertions hold. 
	\begin{itemize}
		\item[\textup{(a)}] 	$\rank\left(\begin{bmatrix}
			\mathcal{H}_{N}(u_{[0,T-1]})\\\mathcal{H}_{1}(x_{[0,T-N-1]})
		\end{bmatrix}\right)=mN+n$.
		\item[\textup{(b)}] 
		Assume $N\geq \ell$ with $\ell$ being the observability index of system (\ref{system}). The sequence $(\bar{u}_{[0,N-1]},\bar{y}_{[0,N-1]})$ is an input-output trajectory of length $N$ for system (\ref{system}), if and only if there exists a vector $\alpha\in\mathbb{R}^{T-N-1}$ such that
		\begin{equation}\label{alp}
			\begin{bmatrix}
				\mathcal{H}_N(u_{[0,T-1]})\\\mathcal{H}_N(y_{[0,T-1]})
			\end{bmatrix}\alpha=\begin{bmatrix}
				\bar{u}_{[0,N-1]}\\\bar{y}_{[0,N-1]}
			\end{bmatrix}.
		\end{equation}
	\end{itemize}
\end{lemma}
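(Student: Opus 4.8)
The plan is to prove the rank identity (a) first and then derive the trajectory parameterization (b) as a linear-algebra consequence of (a) together with the observability hypothesis $N\geq\ell$. For (a), since the stacked matrix has exactly $mN+n$ rows, it suffices to show that its left kernel is trivial. I would take a candidate left-kernel vector written in row blocks $(a_0,\dots,a_{N-1},b)$, with $a_k\in\mathbb{R}^{1\times m}$ and $b\in\mathbb{R}^{1\times n}$, so that along the recorded data $\sum_{k=0}^{N-1}a_k u_{t+k}+b\,x_t=0$ for every admissible $t$. Using $x_{t+j}=A^{j}x_t+\sum_{i=0}^{j-1}A^{j-1-i}Bu_{t+i}$ together with the relation at the shifted time $t+j$, each quantity $bA^{j}x_t$ can be rewritten purely as a linear combination of the input window $u_{[t,\,t+j+N-1]}$. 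Forming the Cayley--Hamilton combination $\sum_{j=0}^{n}\chi_j\,bA^{j}x_t=b\,\chi(A)x_t=0$, where $\chi(\sigma)=\det(\sigma I-A)$, then yields a single linear relation among the order-$(n+N)$ shifted inputs that must vanish for all $t$. Inspecting the coefficients from the highest shift downward forces $a_{N-1}=\dots=a_0=\mathbf{0}$; the residual relation $b\,x_t=0$ then places $b$ in the left kernel of $\mathcal{H}_1(x_{[0,T-N-1]})$, and controllability of $(A,B)$, which ensures the recorded states span $\mathbb{R}^n$ under persistently exciting input, gives $b=\mathbf{0}$.

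The step I expect to be the main obstacle is precisely the passage from the finite, data-based relation to the structural conclusion: the manufactured Cayley--Hamilton combination is a nontrivial left annihilator of the order-$(n+N)$ input Hankel block $\mathcal{H}_{n+N}(u_{[0,T-1]})$ unless $(a,b)=\mathbf{0}$, and ruling this out is exactly where persistency of excitation of order $n+N$ is indispensable---it guarantees that $\mathcal{H}_{n+N}(u_{[0,T-1]})$ has full row rank, so no such nonzero annihilator exists. Equivalently, in the z-domain this is the statement that the rational identity $a(\sigma)+b(\sigma I-A)^{-1}B=0$ cannot hold nontrivially, which after clearing $\det(\sigma I-A)$ reduces to a degree count ($a(\sigma)\det(\sigma I-A)$ has degree $\leq n+N-1$ and is divisible by the degree-$n$ characteristic polynomial, whereas $b\,\mathrm{adj}(\sigma I-A)B$ has degree $\leq n-1$) that forces both terms to vanish. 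I would make the PE-to-identity bridge rigorous through the finite-data coefficient-peeling argument rather than the formal division.

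For (b), the ``if'' direction is immediate by linearity: each column of $\begin{bmatrix}\mathcal{H}_N(u_{[0,T-1]})\\\mathcal{H}_N(y_{[0,T-1]})\end{bmatrix}$ is a genuine length-$N$ input-output trajectory of (\ref{system}), and the set of such trajectories is a linear subspace, so any combination prescribed by (\ref{alp}) is again a trajectory. For the ``only if'' direction I would compare dimensions. The length-$N$ behavior is the image of the linear map $(x_0,\bar u_{[0,N-1]})\mapsto(\bar u_{[0,N-1]},\,\mathcal{O}_N(A,C)x_0+\mathcal{T}_N(A,B,C)\bar u_{[0,N-1]})$, which is injective exactly because $N\geq\ell$ renders $\mathcal{O}_N(A,C)$ of full column rank $n$; hence this behavior has dimension $mN+n$. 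The block-triangular factorization
$$\begin{bmatrix}\mathcal{H}_N(u_{[0,T-1]})\\\mathcal{H}_N(y_{[0,T-1]})\end{bmatrix}=\begin{bmatrix}I_{mN}&\mathbf{0}\\\mathcal{T}_N(A,B,C)&\mathcal{O}_N(A,C)\end{bmatrix}\begin{bmatrix}\mathcal{H}_N(u_{[0,T-1]})\\\mathcal{H}_1(x_{[0,T-N-1]})\end{bmatrix}$$
shows that the input-output Hankel matrix is obtained by left-multiplying the full-rank input-state Hankel matrix from (a) by a matrix of full column rank (again by $N\geq\ell$), so its column span also has dimension $mN+n$. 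Since this span is contained in the $(mN+n)$-dimensional behavior and has the same dimension, the two coincide, which means every length-$N$ trajectory admits a representation of the form (\ref{alp}).
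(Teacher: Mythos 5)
A preliminary remark: the paper does not prove this lemma at all --- it is quoted verbatim from the cited reference \cite{Willems} --- so your proposal can only be judged against the standard argument from that literature, which is indeed the route you chose (left-kernel annihilation plus Cayley--Hamilton plus persistency of excitation for (a); linearity plus a dimension count for (b)). Your part (b) is correct as written: the ``if'' direction by linearity of the trajectory set, and the ``only if'' direction by observing that the behavior has dimension $mN+n$ (injectivity of $(x_0,\bar u)\mapsto(\bar u,\;\mathcal{O}_N(A,C)x_0+\mathcal{T}_N(A,B,C)\bar u)$ under $N\geq\ell$) while the column span of the input-output Hankel matrix, by your factorization and by (a), has the same dimension and is contained in it.

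Part (a), however, has a genuine gap at its final step. After the coefficient peeling gives $a_{N-1}=\cdots=a_0=\mathbf{0}$, you conclude $b=\mathbf{0}$ from $b\,\mathcal{H}_1(x_{[0,T-N-1]})=\mathbf{0}$ by asserting that ``controllability of $(A,B)$, which ensures the recorded states span $\mathbb{R}^n$ under persistently exciting input, gives $b=\mathbf{0}$.'' That assertion cannot be invoked here: the full-row-rank property of the recorded state matrix is itself a nontrivial theorem, and it is in fact a \emph{consequence} of the very statement you are proving (if the stacked matrix in (a) has full row rank, then in particular any left annihilator of the form $(\mathbf{0},b)$ vanishes, i.e.\ the bottom block $\mathcal{H}_1(x)$ has full row rank). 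Every standard proof of the ``states span $\mathbb{R}^n$'' claim runs through exactly the Cayley--Hamilton/PE machinery you are in the middle of deploying, so as written the step is circular, or at best defers to an unproven lemma of the same depth as the target.

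The repair is already contained in the object you constructed but then discarded. Your Cayley--Hamilton combination is a left annihilator of $\mathcal{H}_{n+N}(u_{[0,T-1]})$ whose coefficients must \emph{all} vanish under PE of order $n+N$, not only those attached to shifts $s\geq n$. Writing $\chi(\sigma)=\sigma^n+\chi_{n-1}\sigma^{n-1}+\cdots+\chi_0$, the vanishing of the coefficient of $u_{t+s}$ for $s=0,\dots,n-1$ reads, once $a=\mathbf{0}$, as $b\,\Gamma_s B=\mathbf{0}$ with $\Gamma_s:=\sum_{j=s+1}^{n}\chi_j A^{j-1-s}$. Since $\Gamma_{n-1}=I_n$, $\Gamma_{n-2}=A+\chi_{n-1}I_n$, and in general $\Gamma_s=A^{n-1-s}+(\text{lower powers of }A)$, the family $\{\Gamma_s\}_{s=0}^{n-1}$ is a unitriangular recombination of $\{I_n,A,\dots,A^{n-1}\}$; hence $b\,\Gamma_sB=\mathbf{0}$ for all $s$ is equivalent to $bA^jB=\mathbf{0}$ for $j=0,\dots,n-1$, i.e.\ $b\,\mathcal{R}_n(A,B)=\mathbf{0}$. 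Controllability of $(A,B)$ then forces $b=\mathbf{0}$ directly, with no appeal to the recorded state data at all. With this substitution your proof of (a) is complete and coincides with the classical argument.
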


When $N=1$, Hankel matrices degenerate into the form (\ref{data_mar}); assertion (a) is often used to perform data parameterization \cite{De-Formulas}. From the full row rank property of $[X_0^\top, U_0^\top]^\top$, it follows that
$$\mathrm{im}([I_n,K_x^\top]^\top)\subset\mathrm{im}([X_0^\top, U_0^\top]^\top).$$
Let $X_0G=I_n$, then the closed-loop systems and controllers can be parameterized as $X_1G$ and $U_0G$, respectively. Assertion (b), a corollary of (a), not only indicates that the input-output Hankel matrix $[\mathcal{H}_N(u_{[0,T-1]})^\top,\mathcal{H}_N(y_{[0,T-1]})^\top]^\top$ contains all the information of system (\ref{system}), but also that any trajectory of system (\ref{system}) can be expressed as a linear combination of known trajectories with sufficient information content.

When the state is unmeasurable in Lemma \ref{willems}, we try to replace the state with output in assertion (a). However, as shown in Section \ref{section3}, the input-output data matrix after direct substitution does not necessarily have full row rank, introducing significant challenges for data parameterization. Notably, the input-output data matrix always satisfies assertion (b), containing all the information characterizing system (\ref{system}). Section \ref{section3} will synthesize these two assertions to explore how to balance the utilization of output data in the closed-loop systems.

%%%%%%%%%%%%%%%%%%%%%%%%%%%%%%%%%%%%%%%%%%%%%%%%%%%%%%%%%%%%%%%%%%%%%%

\section{State Parameterization}\label{section3}	
In this section, it is assumed that the state of system (\ref{system}) is unmeasurable, while its input-output data can be collected from the real physical system or its simulations. To find an equivalent form of the unmeasurable state, we revisit two commonly used state parameterization methods for DT systems in Subsection \ref{section3_1} and Subsection \ref{section3_2}, respectively; unlike existing studies, these subsections focus on their data parameterization capabilities and the underlying control theory. Building on these results, in Subsection \ref{section3_3}, we further construct a modified state parameterization form $x_t=Fv_t$ (where $v_t$ serves as a substitute state) that satisfies the following two requirements, and this form has stronger data parameterization capabilities and greater practicality.

\begin{itemize}
	\item 
	Under the state parameterization $x_t=Fv_t$, the output feedback controller $u_t=Kv_t$ is equivalent to the state feedback controller $u_t=K_xx_t$, and the substitute state $v_t$ can be obtained using only input-output data of system (\ref{system}).
	\item 
	The input data matrix $U_0$ and the substitute state data matrix $V_0:=[v_0,\cdots, v_{T-1}]$ can be used to directly parameterize the unknown system (\ref{system}) and the output feedback gain $K$.
\end{itemize}

The work in \cite{State-Para} provides an answer to the first controller equivalence requirement: if the parameterization matrix $F$ has full row rank, the control policy $u_t=Kv_t$ derived from the output feedback learning algorithm will achieve the same performance as the state feedback control policy $u_t=K_xx_t$, thereby guaranteeing the optimality of the resulting controller.
Additionally, some sufficient conditions are also provided to ensure that $F$ has full row rank, albeit these conditions involve user-defined parameter design considerations. 
In contrast, the state parameterizations presented in this section differ in that they necessarily satisfy the full row rank condition for $F$ under Assumption \ref{as1}, as formally established in Theorem \ref{thm1}, Theorem \ref{thm2}, and Lemma \ref{le4} below.

The motivation for the second requirement is as follows: for MO systems, if we directly use the substitute state matrix $V_0$ to replace $X_0$ in Subsection \ref{section2_3} without inspection or processing, $V_0$ does not necessarily have full row rank, invalidating assertion (a) in Lemma \ref{willems}. In this case, $V_0$ cannot fully characterize system (\ref{system}) and the output feedback gain $K$, potentially causing non-convergence, numerical instability, and other issues during iterations.
Therefore, the main contribution of this section is to establish verification conditions and processing methods for ensuring that $V_0$ has full row rank, thereby enabling complete data parameterization for both SO and MO problems, as Theorem \ref{thm3} demonstrates. However, as Lemmas \ref{le1}, \ref{le2}, and \ref{le4} show, the root cause of $V_0$ lacking full row rank lies in the uncontrollability of the consistent system with $v_t$ as its state. To facilitate the analysis, we first introduce the concept of system consistency.

\begin{definition}[Systems Consistency]\label{def1}
	A DT-LTI system $v_{t+1}=\mathcal{A}v_t+\mathcal{B}u_t, y_t=\mathcal{C}v_t$ is said to be consistent with system (\ref{system}) if for any input $\{u_t\}$,
    there exists a state trajectory $\{v_t\}$ such that the corresponding input-output trajectory $\{u_t,y_t\}$ generated by this system is identical to that generated by system (\ref{system}).  
	In this setup, it can be briefly denoted that $(\mathcal{A},\mathcal{B},\mathcal{C})$ is consistent with system (\ref{system}).
\end{definition}

In summary, we expect the system satisfied by the substitute state $v_t$ to be consistent with system (\ref{system}), and the data matrix $V_0$ to have full row rank. The conceptual block diagram of Section \ref{section3} is shown in Figure \ref{f1}. 

\begin{figure}[htbp]
	\centerline{\includegraphics[width=1.0\textwidth]{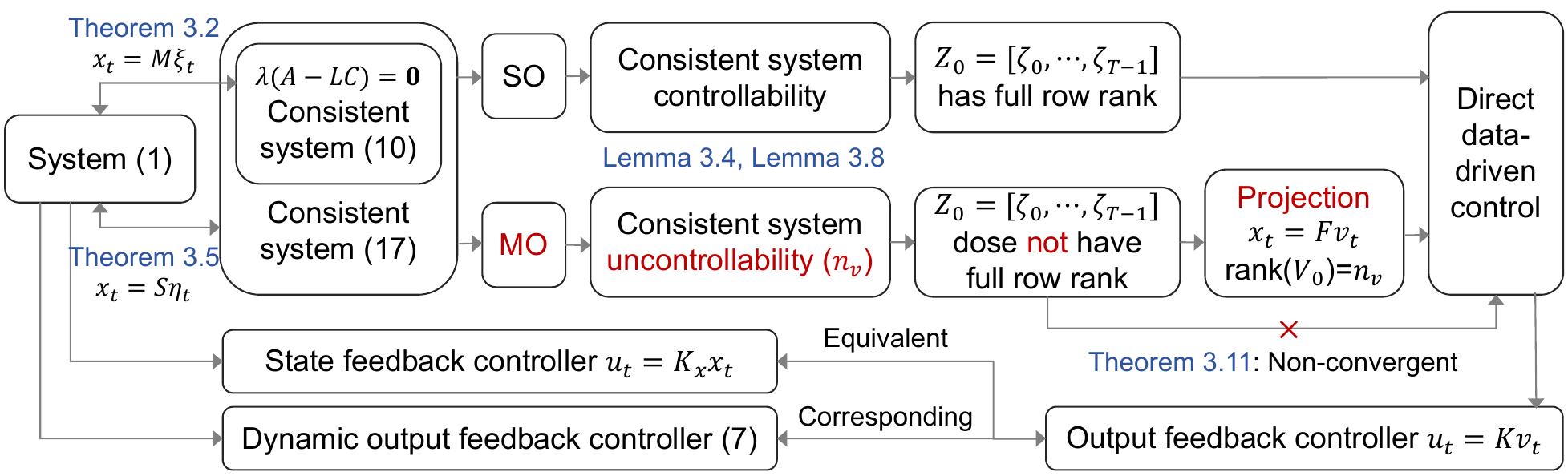}}
	\caption{Conceptual block diagram of state parameterization.}
	\label{f1}
\end{figure}

\subsection{Trajectory-Based State Parameterization}\label{section3_1}	
It is well known that the state $x_t$ of system (\ref{system}) can be recovered from the finite-length historical input-output trajectory $\xi_{t,N}:=[u_{t-N}^\top,\cdots, u_{t-1}^\top, y_{t-N}^\top, \cdots, y_{t-1}^\top]^\top$ \cite{Lewis-IOH}. While $\xi_{t,N}$ depends on the time window length $N$, we will omit the subscript $N$ and denote it as $\xi_t$ for notational brevity.

\begin{theorem}[State parameterization based on delayed input-output]\label{thm1}
	Consider system (\ref{system}). If Assumption \ref{as1} holds and $N\geq \ell$ with $\ell$ being the observability index of system (\ref{system}), then the following assertions hold.
	\begin{itemize}
		\item[\textup{(a)}] The state $x_t$ of system (\ref{system}) can be expressed in terms of the measurable historical input-output trajectory $\xi_t$ as
		\begin{equation}\label{x_t}
			x_t=[\mathcal{R}(A,B)-A^N\mathcal{O}^\dagger(A,C)\mathcal{T}(A,B,C),A^N\mathcal{O}^\dagger(A,C)]\xi_t:=M\xi_t.
		\end{equation}
		\item[\textup{(b)}] The parameterization matrix $M\in\mathbb{R}^{n\times (m+p)N}$ has full row rank.
	\end{itemize}
\end{theorem}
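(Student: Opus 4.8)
For part (a), the plan is to propagate the dynamics across the window $[t-N,t]$ in two directions and then eliminate the unmeasurable anchor state $x_{t-N}$. Iterating $x_{t+1}=Ax_t+Bu_t$ forward $N$ steps gives $x_t=A^N x_{t-N}+\mathcal{R}(A,B)\,u_{[t-N,t-1]}$, while stacking the outputs $y_{t-N},\dots,y_{t-1}$ and substituting the same recursion yields $y_{[t-N,t-1]}=\mathcal{O}(A,C)\,x_{t-N}+\mathcal{T}(A,B,C)\,u_{[t-N,t-1]}$; both identities are merely bookkeeping of the definitions of $\mathcal{R}$, $\mathcal{O}$, and $\mathcal{T}$. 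Since $N\geq\ell$ and $(A,C)$ is observable by Assumption \ref{as1}, $\mathcal{O}(A,C)$ has full column rank, so $\mathcal{O}^\dagger(A,C)\mathcal{O}(A,C)=I_n$; left-multiplying the output identity by $\mathcal{O}^\dagger(A,C)$ solves for $x_{t-N}=\mathcal{O}^\dagger(A,C)\big(y_{[t-N,t-1]}-\mathcal{T}(A,B,C)\,u_{[t-N,t-1]}\big)$. Substituting this into the forward expression for $x_t$ and collecting the coefficients of $u_{[t-N,t-1]}$ and $y_{[t-N,t-1]}$ reproduces exactly the matrix $M$ in (\ref{x_t}).

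For part (b), I would argue at the level of column spaces rather than with the entries of $M$. First, because $\mathcal{O}^\dagger(A,C)\mathcal{O}(A,C)=I_n$ forces $\mathcal{O}^\dagger(A,C)$ to be surjective onto $\R^n$, the second block satisfies $\mathrm{im}(A^N\mathcal{O}^\dagger(A,C))=A^N\R^n=\mathrm{im}(A^N)$. Next, since the correction term $A^N\mathcal{O}^\dagger(A,C)\mathcal{T}(A,B,C)$ in the first block has image contained in $\mathrm{im}(A^N)$, appending the columns of the second block absorbs it, so that $\mathrm{im}(M)=\mathrm{im}(\mathcal{R}(A,B))+\mathrm{im}(A^N)$. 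The claim $\rank(M)=n$ then reduces to showing this sum equals $\R^n$.

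The crux, and the step I expect to be least routine, is establishing $\mathrm{im}(\mathcal{R}(A,B))+\mathrm{im}(A^N)=\R^n$ when $N$ is smaller than $n$: in that regime $\mathcal{R}_N(A,B)$ alone need not span the reachable subspace, and a naive Cayley--Hamilton argument is unavailable, so the $\mathrm{im}(A^N)$ term must supply the missing directions. My plan is to set $W:=\mathrm{im}(\mathcal{R}(A,B))+\mathrm{im}(A^N)=\sum_{i=0}^{N-1}\mathrm{im}(A^iB)+\mathrm{im}(A^N)$ and prove $W$ is $A$-invariant: applying $A$ sends $\mathrm{im}(A^iB)$ to $\mathrm{im}(A^{i+1}B)$, which for $i\leq N-2$ remains inside $\mathrm{im}(\mathcal{R}(A,B))$ and for $i=N-1$ lands in $\mathrm{im}(A^NB)\subseteq\mathrm{im}(A^N)$, while $\mathrm{im}(A^N)$ maps into $\mathrm{im}(A^{N+1})\subseteq\mathrm{im}(A^N)$; hence $AW\subseteq W$. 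Since $W$ also contains $\mathrm{im}(B)$, it contains the smallest $A$-invariant subspace containing $\mathrm{im}(B)$, namely the reachable subspace, which equals $\R^n$ by controllability of $(A,B)$ in Assumption \ref{as1}. Therefore $\mathrm{im}(M)=\R^n$, i.e., $M$ has full row rank.
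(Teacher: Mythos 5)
Your proof is correct, and part (a) coincides with the paper's own argument: both propagate the state recursion forward over the window to get $x_t=A^Nx_{t-N}+\mathcal{R}(A,B)u_{[t-N,t-1]}$, stack the outputs to get $y_{[t-N,t-1]}=\mathcal{O}(A,C)x_{t-N}+\mathcal{T}(A,B,C)u_{[t-N,t-1]}$, and eliminate $x_{t-N}$ via the left inverse of $\mathcal{O}(A,C)$, which exists because $N\geq\ell$ and $(A,C)$ is observable.

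For part (b), your image-space absorption step is equivalent to the paper's factorization: the paper right-multiplies $M$ by the invertible unipotent block-triangular matrix in equation (\ref{eq1}) to obtain $[\mathcal{R}(A,B),A^N\mathcal{O}^\dagger(A,C)]$, which—combined with the surjectivity of $\mathcal{O}^\dagger(A,C)$—is exactly your identity $\mathrm{im}(M)=\mathrm{im}(\mathcal{R}(A,B))+\mathrm{im}(A^N)$. Where you genuinely go beyond the paper is the final step. The paper simply asserts that full row rank is ``directly obtained from the controllability in Assumption \ref{as1}''; that conclusion is immediate only when $N\geq n$, since then Cayley--Hamilton gives $\rank(\mathcal{R}_N(A,B))=n$ on its own. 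But the theorem's hypothesis only requires $N\geq\ell$, and for $p>1$ the observability index $\ell$ is typically strictly smaller than $n$, in which case $\mathcal{R}_N(A,B)$ alone may be rank deficient and the term $\mathrm{im}(A^N)$ must supply the missing directions. Your argument—that $W:=\mathrm{im}(\mathcal{R}_N(A,B))+\mathrm{im}(A^N)$ is $A$-invariant (since $A\cdot\mathrm{im}(A^{N-1}B)\subseteq\mathrm{im}(A^N)$ and $A\cdot\mathrm{im}(A^N)\subseteq\mathrm{im}(A^N)$), contains $\mathrm{im}(B)$, and hence contains the reachable subspace, which equals $\R^n$ by controllability—is precisely the justification the paper leaves implicit. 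So your proposal is not only correct but supplies a rigorous treatment of the regime $\ell\leq N<n$ that the paper's one-line conclusion glosses over.
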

\begin{proof}
	For assertion (a), the following equations can be derived from the expression of system (\ref{system}):
	$$x_t=A^Nx_{t-N}+\mathcal{R}(A,B)u_{[t-N,t-1]},$$
	$$y_{[t-N,t-1]}=\mathcal{O}(A,C)x_{t-N}+\mathcal{T}(A,B,C)u_{[t-N,t-1]}.$$
	Since system (\ref{system}) is observable and $N\geq \ell$, $\mathcal{O}(A,C)$ has full column rank with a unique left inverse. Rearranging the above two equations yields equation (\ref{x_t}). 
	
	For assertion (b), noting
	\begin{equation}\label{eq1}
		M\begin{bmatrix}
			I_{mN}&\mathbf{0}\\\mathcal{T}(A,B,C)&I_{pN}
		\end{bmatrix}=[\mathcal{R}(A,B),A^N\mathcal{O}^\dagger(A,C)],
	\end{equation}
	the full row rank property of $M$ is directly obtained from the controllability in Assumption \ref{as1}.
\end{proof}

\begin{remark}\label{re2}
	Suppose system (\ref{system}) is uncontrollable but stabilizable. If the dynamic matrix $A$ in system (\ref{system}) is nonsingular, $M$ still has full row rank according to equation (\ref{eq1}). If $A$ has zero eigenvalues, $M$ may not have full row rank; this is consistent with \cite{State-Para}.
\end{remark}

The following Lemma \ref{le1} shows that the expression between $x_t$ and $\xi_t$ is not necessarily unique, i.e., $M_0$ in equation (\ref{x_t1}) may not be a zero matrix; more importantly, when $p>1$ (i.e., in MO problems), the consistent system (\ref{M_0}) with $\xi_t$ as the state is uncontrollable, and this is the key factor that prevents the data matrix from fully parameterizing the closed-loop system and the output feedback gain.
\begin{lemma}\label{le1}
	Define $\mathcal{B}_m:=[\mathbf{0}_{m\times m(N-1)},I_{m},\mathbf{0}_{m\times pN}]^\top$, $\Gamma:=[\mathbf{0}_{p\times(mN+p(N-1))},I_{p}]^\top$,		
	$$\Omega:=\left[
	\begin{array}{c:c}
		\begin{array}{c:c}
			&I_{m(N-1)}\\\hdashline\mathbf{0}_{m\times m}&
		\end{array}&\mathbf{0}_{mN\times pN}\\\hdashline
		\mathbf{0}_{pN\times mN}&\begin{array}{c:c}
			&I_{p(N-1)}\\\hdashline\mathbf{0}_{p\times p}&
		\end{array}
	\end{array}\right],$$
    $M_0$ as a matrix satisfying $\mathrm{ker}(M_0)=\mathrm{im} (\mathcal{R}(\Omega+\Gamma CM,\mathcal{B}_m))$, and $\mathcal{A}_m:=\Omega+\Gamma C(M+M_0)$.
	If Assumption \ref{as1} holds and $N\geq \ell$, the following assertions hold.
	\begin{itemize}
	  \item[\textup{(a)}] The relationship between the state $x_t$ of system (\ref{system}) and the historical input-output trajectory $\xi_{t}$ can be expressed by the following equation:
	  \begin{equation}\label{x_t1}
	  	x_t=(M+M_0)\xi_t,
	  \end{equation}
      where $M$ is given by equation (\ref{x_t}), and $M_0$ may not be a zero matrix.
	  \item[\textup{(b)}] System 
	  \begin{equation}\label{M_0}
	  	\begin{aligned}
  		\xi_{t+1}&=\mathcal{A}_m\xi_t+\mathcal{B}_mu_t,\quad
	  		y_t=C(M+M_0)\xi_t
	  	\end{aligned}
	  \end{equation}
  is consistent with system (\ref{system}), i.e., system (\ref{M_0}) satisfies Definition \ref{def1}.
	  \item[\textup{(c)}]
	  The controllability matrix $\mathcal{R}(\mathcal{A}_m,\mathcal{B}_m)$ of system (\ref{M_0}) has rank $mN+n$. System (\ref{M_0}) is controllable if and only if $p=1$, or $p>1$ and $pN=n$; otherwise, it is uncontrollable.
	\end{itemize}
\end{lemma}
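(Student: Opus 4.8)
The plan is to organize the entire argument around a single geometric object: the subspace $\mathcal{V}:=\mathrm{im}(\mathcal{R}(\Omega+\Gamma CM,\mathcal{B}_m))=\ker(M_0)$, i.e. the reachable subspace of the ``nominal'' consistent pair $(\Omega+\Gamma CM,\mathcal{B}_m)$, and to identify it with the set of all \emph{valid} windows
$$\mathcal{W}:=\{[u^\top,y^\top]^\top:\ y-\mathcal{T}(A,B,C)\,u\in\mathrm{im}\,\mathcal{O}(A,C)\},$$
meaning those $\xi$ that actually arise as $[u_{[t-N,t-1]}^\top,y_{[t-N,t-1]}^\top]^\top$ for some initial state $x_{t-N}$. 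First I would record that, because $(A,C)$ is observable and $N\geq\ell$, we have $\rank\,\mathcal{O}(A,C)=n$ as in Theorem~\ref{thm1}, so for each fixed $u$ the admissible $y$ ranges over an $n$-dimensional affine set; hence $\dim\mathcal{W}=mN+n$.

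The crux is the identification $\mathcal{V}=\mathcal{W}$. For $\mathcal{V}\subseteq\mathcal{W}$ I would note that the all-zero window is valid (it corresponds to $x_{t-N}=0$ and zero inputs) and that one step of the nominal dynamics $\xi_{t+1}=(\Omega+\Gamma CM)\xi_t+\mathcal{B}_mu_t$ merely drops the oldest input-output pair and appends the consistent new pair $(u_t,Cx_t)$; thus validity is preserved along trajectories started from zero, and every reachable $\xi\in\mathcal{V}$ is valid. For the reverse inclusion $\mathcal{W}\subseteq\mathcal{V}$ I would invoke controllability of $(A,B)$ from Assumption~\ref{as1}: given any valid window with associated state $x_{t-N}$, steer the true system from the origin to $x_{t-N}$ in finitely many steps and then apply the window's inputs $u_{t-N},\dots,u_{t-1}$; the window produced at the end equals the prescribed one, and the whole input sequence drives $\xi$ from $0$ to it, so $\xi\in\mathcal{V}$. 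This two-sided argument is the step I expect to be the main obstacle, since it is precisely where both structural hypotheses are indispensable: observability for the dimension count, controllability for reachability of an arbitrary $x_{t-N}$.

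With $\mathcal{V}=\mathcal{W}=\ker(M_0)$ in hand, assertions (a) and (b) follow quickly. For (a): every window actually generated by system~(\ref{system}) lies in $\mathcal{W}=\ker(M_0)$, so $M_0\xi_t=0$, and Theorem~\ref{thm1} upgrades to $x_t=M\xi_t=(M+M_0)\xi_t$; moreover $M_0$ vanishes only on the subspace $\mathcal{V}$, which is proper exactly when $mN+n<(m+p)N$, i.e. $pN>n$, and this is precisely the regime where $M_0$ need not be the zero matrix. For (b): taking $\xi_t$ to be the true window, the shift-and-append structure reads $\xi_{t+1}=\Omega\xi_t+\mathcal{B}_mu_t+\Gamma y_t$; substituting $y_t=Cx_t=C(M+M_0)\xi_t$ from (a) turns this into $\xi_{t+1}=\mathcal{A}_m\xi_t+\mathcal{B}_mu_t$ with output $y_t=C(M+M_0)\xi_t$, so this window trajectory reproduces the input-output behaviour of~(\ref{system}) for every input, which verifies Definition~\ref{def1}.

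For (c) I would first show $\mathcal{R}(\mathcal{A}_m,\mathcal{B}_m)=\mathcal{R}(\Omega+\Gamma CM,\mathcal{B}_m)$ column by column: since $M_0$ annihilates $\mathcal{V}$, while $\mathcal{V}$ is $(\Omega+\Gamma CM)$-invariant and contains $\mathrm{im}(\mathcal{B}_m)$, a short induction gives $\mathcal{A}_m^k\mathcal{B}_m=(\Omega+\Gamma CM)^k\mathcal{B}_m$ for all $k$, so the two reachability matrices coincide and $\rank\,\mathcal{R}(\mathcal{A}_m,\mathcal{B}_m)=\dim\mathcal{V}=mN+n$. As system~(\ref{M_0}) has state dimension $(m+p)N$, it is controllable if and only if $mN+n=(m+p)N$, i.e. $pN=n$; otherwise the deficiency $(m+p)N-(mN+n)=pN-n>0$ certifies uncontrollability. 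Finally, for $p=1$ the observability index satisfies $\ell=n$, so at the window length $N=\ell$ one automatically has $pN=N=n$, which is why the single-output case is always controllable, whereas for $p>1$ the equality $pN=n$ is a genuine constraint.
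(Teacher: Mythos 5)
Your proof is correct and takes essentially the same route as the paper's: both identify the set of valid length-$N$ input-output windows with the reachable subspace of the consistent system (observability giving the dimension count $mN+n$ via the full-column-rank map $\mathcal{M}_N$, controllability of $(A,B)$ giving reachability of every valid window), and then compare $mN+n$ with the state dimension $(m+p)N$. Your explicit two-sided inclusion argument and the induction showing $\mathcal{R}(\mathcal{A}_m,\mathcal{B}_m)=\mathcal{R}(\Omega+\Gamma CM,\mathcal{B}_m)$ simply spell out steps the paper dismisses as ``easy'' or ``obvious,'' which is a difference of rigor rather than of approach.
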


\begin{proof}		
	From Theorem \ref{thm1}, it is easy to show that $(\Omega+\Gamma CM,\mathcal{B}_m,CM)$ is consistent with system (\ref{system}), and $\xi_{t}\in \mathrm{im}(\mathcal{R}(\Omega+\Gamma CM,\mathcal{B}_m))$. Then, $M_0\xi_t=\mathbf{0}$ is obtained by the definition of $M_0$, which proves assertion (a) and assertion (b).

	For assertion (c), define the behavioral subspace spanned by input-output trajectories of system (\ref{system}) with length $N$ as $\mathcal{V}_{N}$, and the state space of system (\ref{M_0}) as $\mathcal{V}_{N}^M$.%表示就相当于系统
	For any $\xi_t\in\mathcal{V}_N$, it holds that
	\begin{equation}\label{TO}
		\xi_t=\begin{bmatrix}
			I_{mN}&\mathbf{0}\\\mathcal{T}(A,B,C)&\mathcal{O}(A,C)
		\end{bmatrix}\begin{bmatrix}
			u_{[t-N,t-1]}\\x_{t-N}
		\end{bmatrix}:=\mathcal{M}_N\bar{x}_{t-N},
	\end{equation}
	where $\mathcal{M}_N$ has full column rank, i.e., $\rank(\mathcal{M}_N)=mN+n$.
	From the controllability in Assumption \ref{as1}, $\bar{x}_{t-N}$ can take any value, then $ \mathcal{V}_N= \mathrm{im}(\mathcal{M}_N)$.
	Assume that the initial state of system (\ref{M_0}) belongs to its controllable subspace, thus $\mathcal{V}_N^M= \mathrm{im}(\mathcal{R}(\mathcal{A}_m,\mathcal{B}_m))$. Moreover, by virtue of the systems consistency in assertion (b), it is obvious that $\mathcal{V}_N=\mathcal{V}_N^M$, i.e.,
	\begin{equation}\label{MAB}
		\nonumber\mathrm{im}(\mathcal{M}_N)=\mathrm{im}(R(\mathcal{A}_m,\mathcal{B}_m)).
	\end{equation}
	Thus, $\rank(\mathcal{R}(\mathcal{A}_m,\mathcal{B}_m))=\rank(\mathcal{M}_N)=mN+n$.
	We know that a system is controllable if and only if the rank of its controllability matrix equals the dimension of the state. The state dimension of system (\ref{M_0}) is $(m+p)N\geq mN+n$, with equality if and only if $pN=n$.
\end{proof}

Let's turn attention to the substitute state matrix $\Xi_0:=[\xi_{0},\cdots,\xi_{T-1}]$. Firstly, from the behavioral space  perspective of system (\ref{system}), with the controllability assumption and PE inputs of order $N+1+n$, analogously to equation (\ref{TO}), it follows that
\begin{equation}
	\nonumber
	\begin{split}
		&\rank\left(\begin{bmatrix}
			U_0\\\Xi_0
		\end{bmatrix}\right)
		=\rank\left(\begin{bmatrix}\mathcal{H}_{1}(u_{[0,T-1]})\\\mathcal{H}_{N}(u_{[-N,T-2]})\\\mathcal{H}_{N}(y_{[-N,T-2]})\end{bmatrix}\right)
		=\rank\left(\begin{bmatrix}
			\mathcal{H}_{N+1}(u_{[-N,T-1]})\\\mathcal{H}_{N}(y_{[-N,T-2]})
		\end{bmatrix}\right)\\
		&=\rank\left(\underbrace{\left[\begin{array}{c:c}
		    I_{m(N+1)} & \mathbf{0} \\\hdashline
		     \begin{array}{c:c}
		          \mathcal{T}(A,B,C)&\mathbf{0}
		     \end{array}& \mathcal{O}(A,C)
		\end{array}\right]}_{\mathcal{M}_{N+1}}\begin{bmatrix}
			\mathcal{H}_{N+1}(u_{[-N,T-1]})\\\mathcal{H}_{1}(x_{[-N,T-N-2]})
		\end{bmatrix}\right)=\rank(\mathcal{M}_{N+1})\\
        &=m(N+1)+n\leq m(N+1)+pN.
	\end{split}
\end{equation}
Thus, when $p>1$, $\Xi_0$ does not necessarily have full row rank even though it contains all the information about system (\ref{system}). Secondly, from the state space perspective of system (\ref{M_0}) and by Lemma \ref{le1}, the redundancy in the substitute state $\xi_{t}$ may render the consistent system (\ref{M_0}) uncontrollable under the MO setting. This violates the condition of Lemma \ref{willems}, and thus the full row rank property of $[\Xi_0^\top,U_0^\top]^\top$ cannot be directly established.
Generally speaking, direct data-driven methods may fail if $\Xi_0$ is directly used to replace $X_0$ to parameterize the controllers and the closed-loop systems.

\subsection{Observer-Based State Parameterization}\label{section3_2}
Since the derivation of state parameterization (\ref{x_t}) is essentially based on the DT system equation (\ref{system}), state parameterization (\ref{x_t}) is dead-beat and cannot be extended to continuous-time problems \cite{State-Para}. This subsection considers another state parameterization method with a wider scope of application, which estimates the true state via a Luenberger observer
\begin{equation}\label{x^c}
	x^c_{t+1}=(A-LC)x^c_{t}+Bu_t+Ly_t,
\end{equation}
where $x^c_t\in\mathbb{R}^{n}$ is the estimated state, and $L$ is a user-defined Luenberger observer gain. Let the characteristic polynomial of $A-LC$ be denoted as $\Lambda(z):=\mathrm{det}(zI_n-(A-LC)):=z^n+a_{n-1}z^{n-1}+\cdots+a_1z+a_0$, and the adjoint matrix of $zI_n-(A-LC)$  be denoted as $\mathrm{adj}(zI_n-(A-LC)):=D_{n-1}z^{n-1}+\cdots+D_1z+D_0$ with $D_i\in\mathbb{R}^{n\times n}, i=0,\cdots,n-1$. Define $\mathcal{D}:=[D_0,\cdots,D_{n-1}]$ and let $a_n=1$. Using the relation $\mathrm{adj}(zI_n-(A-LC))(zI_n-(A-LC))=\mathrm{det}(zI_n-(A-LC))I_n$, it holds that
$D_i=\sum_{j=1}^{n-i}a_{n-j+1}(A-LC)^{n-i-j}$, $i=0,\cdots,n-1$.

Define the observation error as $\epsilon_t:=x_t-x_t^c$, which satisfies
\begin{equation}\label{eps}
	\epsilon_{t+1}=(A-LC)\epsilon_t.
\end{equation}
Due to $\epsilon_t=(A-LC)^t\epsilon_0$, the observer-based state estimation has no error only when all the eigenvalues of $A-LC$ are zero. Otherwise, when all the eigenvalues of $A-LC$ are less than $1$ in magnitude but not all zero,
the observation error asymptotically converges to $\mathbf{0}$ only as time approaches infinity. Therefore, in order to estimate the true state $x_t=x_t^c+\epsilon_t$, we need to parameterize $x^c_t$ and $\epsilon_t$ separately using measurable input-output data of system (\ref{system}). 
To clarify the subsequent derivation, define $A_\epsilon$ as a user-defined dynamic matrix that shares the same eigenvalues as $A-LC$,
	$$A_s:=\left[\begin{array}{c:c}
		\mathbf{0}&I_{n-1}\\\hdashline
		-a_0&\begin{matrix}
			-a_1&\cdots&-a_{n-1}
		\end{matrix}
	\end{array}\right],\quad b_s:=\left[\begin{array}{c}
		\mathbf{0}_{(n-1)\times1}\\\hdashline1
	\end{array}\right], $$
$\mathcal{A}_s:=\mathrm{diag}(I_{m+p}\otimes A_s,A_\epsilon)$, and $\mathcal{B}_s:=\mathrm{diag}(I_{m+p}\otimes b_s,\mathbf{0}_{n\times1})$.  Replace $A-LC$ used in defining $\mathcal{D}$ with $A_\epsilon$, then define $\mathcal{D}^{\epsilon}$ analogously.
For $i=1,\cdots,m$ and $j=1,\cdots,p$, let $u_t^i$ and $y_t^j$ denote the $i$-th component of $u_t$ and the $j$-th component of $y_t$, respectively, with their z-transforms denoted as $U^i(z)$ and $Y^j(z)$. Let $B_i$ and $L_j$ denote the $i$-th column of $B$ and the $j$-th column of $L$, respectively; define $S_u^i:=\mathcal{D}(I_n\otimes B_i)$, $S_y^j:=\mathcal{D}(I_n\otimes L_j)$, $$\eta_t^{u^i}:=\mathcal{Z}^{-1}\left(\left[
	     \frac{U^i(z)}{\Lambda(z)},\frac{zU^i(z)}{\Lambda(z)},\cdots,\frac{z^{n-1}U^i(z)}{\Lambda(z)}
	     \right]^\top\right), ~\eta_t^{y^j}:=\mathcal{Z}^{-1}\left(\left[
	     \frac{Y^j(z)}{\Lambda(z)},\frac{zY^j(z)}{\Lambda(z)},\cdots,\frac{z^{n-1}Y^j(z)}{\Lambda(z)}
	     \right]^\top\right),$$
         where $\eta_t^{u^i}$ and $\eta_t^{y^j}$ satisfy systems $\eta_{t+1}^{u^i}=A_s\eta_{t}^{u^i}+b_su^i_t$, $\eta_{0}^{u^i}=\mathbf{0}$ and $\eta_{t+1}^{y^j}=A_s\eta_{t}^{y^j}+b_sy^j_t$, $\eta_{0}^{y^j}=\mathbf{0}$, respectively.

\begin{theorem}[State parameterization based on input-output filtering] \label{thm2}
Consider system (\ref{system}). If Assumption \ref{as1} holds, then the following assertions hold.
	\begin{itemize}
		\item[\textup{(a)}] 
		The state $x_t$ of system (\ref{system}) can be expressed in terms of the input-output filter vector $\eta_t\in\mathbb{R}^{(m+p+1)n}$ as 
		\begin{equation}\label{x_t4}
			x_t=[S_u,S_y,S_{\epsilon}][(\eta_t^u)^\top,(\eta_t^y)^\top,(\eta_t^{\epsilon})^\top]^\top:=S\eta_t,
		\end{equation}
		where $\eta_t^u:=[(\eta_t^{u^1})^\top,\cdots,(\eta_t^{u^m})^\top]^\top$, $\eta_t^y:=[(\eta_t^{y^1})^\top,\cdots,(\eta_t^{y^p})^\top]^\top$, $S_u:=[S_u^1,\cdots,S_u^m]$,  $S_y:=[S_y^1,\cdots,S_y^p]$,  $\eta_t^{\epsilon}$ satisfies the system
	     \begin{equation}\label{A_e}
	 	\eta_{t+1}^{\epsilon}=A_{\epsilon}\eta_t^{\epsilon}, \quad \eta_0^{\epsilon}\neq\mathbf{0},
	     \end{equation}
         $S_{\epsilon_x}:=\mathcal{D}(I_n\otimes\epsilon_0)$, $S_{\epsilon_{\eta}}:=\mathcal{D}^{\epsilon}(I_n\otimes\eta_0^\epsilon)$, and $S_{\epsilon}:=S_{\epsilon_x}S_{\epsilon_{\eta}}^{-1}$.
The substitute state $\eta_t$ can be generated in a model-free manner via the following user-defined system:
		\begin{equation}\label{S0}
    \eta_{t+1}=\mathcal{A}_s\eta_t+\mathcal{B}_s\left[\begin{array}{c:c}
				u_t^\top\quad y_t^\top
				&\mathbf{0}_{1\times n}
			\end{array}\right]^\top,\quad \eta_{0}=[\mathbf{0}_{1\times (m+p)n}, (\eta_{0}^{\epsilon})^\top]^\top\neq\mathbf{0}.
		\end{equation}
	
		\item[\textup{(b)}] The parameterization matrix $S$ has full row rank.
\end{itemize}
\end{theorem}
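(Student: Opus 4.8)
The plan is to establish assertion (a) through the decomposition $x_t = x_t^c + \epsilon_t$ into observer state and observation error, parameterizing each summand by measurable or synthetic filter signals, and then to deduce assertion (b) from the controllability structure that these filters inherit. Writing $M := A - LC$, the algebraic backbone is the resolvent expansion $(zI_n - M)^{-1} = \mathrm{adj}(zI_n - M)/\Lambda(z) = \sum_{i=0}^{n-1} D_i z^i/\Lambda(z)$.

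First I would parameterize the observer state. Running (\ref{x^c}) from $x_0^c = \mathbf{0}$ and z-transforming $x^c_{t+1} = Mx^c_t + Bu_t + Ly_t$ gives $X^c(z) = (zI_n - M)^{-1}\paren{BU(z) + LY(z)}$. Inserting the resolvent expansion and splitting $B$ and $L$ column-wise turns each term into $D_i B_k\, z^i U^k(z)/\Lambda(z)$ and $D_i L_j\, z^i Y^j(z)/\Lambda(z)$; since the scalar factors $z^i U^k(z)/\Lambda(z)$ and $z^i Y^j(z)/\Lambda(z)$ are exactly the entries of the filters $\eta_t^{u^k}$ and $\eta_t^{y^j}$, collecting coefficients produces $x_t^c = S_u \eta_t^u + S_y \eta_t^y$ with $S_u^k = \mathcal{D}(I_n \otimes B_k)$ and $S_y^j = \mathcal{D}(I_n \otimes L_j)$, as defined.

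The core of the argument, and the step I expect to be the main obstacle, is to represent the error $\epsilon_t = M^t \epsilon_0$ using only the synthetic, system-independent signal $\eta_t^\epsilon = A_\epsilon^t \eta_0^\epsilon$. The enabling identity is that the z-transform of the matrix sequence $\curlbrac{M^t}$ equals $z(zI_n - M)^{-1} = \sum_{i=0}^{n-1} D_i z^{i+1}/\Lambda(z)$, whence $M^t = \sum_{i=0}^{n-1} \phi_i(t) D_i$ with $\phi_i(t) := \mathcal{Z}^{-1}\paren{z^{i+1}/\Lambda(z)}$ depending only on $\Lambda$. Because $A_\epsilon$ is designed to share the eigenvalues, hence the characteristic polynomial, of $M$, the same coefficients give $A_\epsilon^t = \sum_{i=0}^{n-1} \phi_i(t) D_i^\epsilon$. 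Setting $\Phi(t) := \brac{\phi_0(t), \cdots, \phi_{n-1}(t)}^\top$ yields $\epsilon_t = S_{\epsilon_x}\Phi(t)$ and $\eta_t^\epsilon = S_{\epsilon_\eta}\Phi(t)$; choosing $(A_\epsilon, \eta_0^\epsilon)$ controllable makes $S_{\epsilon_\eta}$ invertible, and eliminating the common mode vector $\Phi(t)$ gives $\epsilon_t = S_{\epsilon_x} S_{\epsilon_\eta}^{-1} \eta_t^\epsilon = S_\epsilon \eta_t^\epsilon$. Adding the two parameterizations produces $x_t = S\eta_t$, while the model-free generation (\ref{S0}) is immediate: the block-diagonal $(\mathcal{A}_s, \mathcal{B}_s)$ runs $m+p$ decoupled companion filters driven by the scalars $u_t^k, y_t^j$, plus one autonomous block generating $\eta_t^\epsilon$, none of which requires $A$, $B$, or $C$.

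For assertion (b) I would argue algebraically. Because $D_i = \sum_{j=1}^{n-i} a_{n-j+1} M^{n-i-j}$ is a triangular, invertible combination of $I_n, M, \cdots, M^{n-1}$, one has $\mathrm{span}\curlbrac{D_i \tilde b : i} = \mathrm{span}\curlbrac{M^i \tilde b : i}$ for every vector $\tilde b$; applied to the columns of $B$ and $L$ this gives $\mathrm{im}(\brac{S_u, S_y}) = \mathrm{im}(\mathcal{R}(M, \brac{B, L}))$, the controllability subspace of $(A - LC, \brac{B, L})$. This pair is controllable: $(A, \brac{B, L})$ is controllable because $(A, B)$ is (Assumption \ref{as1}), and adding $-Cx_t$ to the auxiliary input channel that multiplies $L$ is a state feedback, which leaves controllability invariant and turns $(A, \brac{B, L})$ into $(A - LC, \brac{B, L})$. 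Hence $\mathrm{im}(\brac{S_u, S_y}) = \mathbb{R}^n$, so $S = \brac{S_u, S_y, S_\epsilon}$ has full row rank irrespective of $S_\epsilon$. Equivalently, controllability of $(A, B)$ makes every point of $\mathbb{R}^n$ a reachable state, and each reachable $x_t$ equals $S\eta_t$, forcing $\mathrm{im}(S) = \mathbb{R}^n$. The delicate point is thus the error step: matching the two autonomous systems through their shared characteristic polynomial and securing invertibility of $S_{\epsilon_\eta}$ via the design of $(A_\epsilon, \eta_0^\epsilon)$.
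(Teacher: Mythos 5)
Your proposal is correct, and it diverges from the paper's own proof in two respects worth recording. For assertion (a) the paper gives no derivation at all: it defers to \cite{Deng-DT-output} and merely fixes the parameter definitions, whereas you supply a self-contained argument. Your mechanism — the common coefficient functions $\phi_i(t)=\mathcal{Z}^{-1}\left(z^{i+1}/\Lambda(z)\right)$ that expand both $(A-LC)^t$ and $A_\epsilon^t$ because the two matrices share the characteristic polynomial $\Lambda$, together with invertibility of $S_{\epsilon_\eta}$ secured by choosing $(A_\epsilon,\eta_0^\epsilon)$ controllable — is exactly what the paper relies on implicitly elsewhere (compare Remark \ref{re3} and the invertibility of $\mathcal{G}_\epsilon$ inside the proof of Lemma \ref{le2}); your normalization $x_0^c=\mathbf{0}$, which makes $\epsilon_0=x_0$, is the right reading of the statement, since the filters in (\ref{S0}) start at rest. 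For assertion (b) both proofs perform the same reduction: the unipotent-triangular relation $D_i=\sum_{j=1}^{n-i}a_{n-j+1}(A-LC)^{n-i-j}$ converts the rank of $S$ into the rank of a controllability matrix of $A-LC$. They close the argument differently, however: the paper keeps the augmented input matrix $[B,L,\epsilon_0]$ and argues by contradiction via the Hautus test — a nonzero left eigenvector $q$ with $q^\top(A-LC)=\lambda q^\top$, $q^\top B=\mathbf{0}$, $q^\top L=\mathbf{0}$ forces $q^\top[A-\lambda I_n,B]=\mathbf{0}$, contradicting Assumption \ref{as1} — while you prove controllability of $(A-LC,[B,L])$ directly, using invariance of controllability under the state feedback $w\mapsto -Cx+w$ applied to the channel through $L$. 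The two are equivalent in substance; the Hautus route is marginally shorter, while your route is more structural: it identifies $\mathrm{im}([S_u,S_y])$ with the reachable subspace of $(A-LC,[B,L])$, which dovetails with the consistency and controllability analysis of Lemma \ref{le2}, and it makes explicit that the full row rank of $S$ holds irrespective of the error block $S_\epsilon$.
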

\begin{proof}
	The proof of assertion (a) can be found in \cite{Deng-DT-output}. For the sake of completeness, we present this result here and specify the parameters in more detail.

    %(b)和其他文献本质上就不同
    We now present the proof of assertion (b), which differs from other references in stating that the parameterization matrix 
$S$ must have full row rank under the controllability in Assumption \ref{as1}.
	Note that in equation (\ref{S0}), $\eta_t^u$ and $\eta_t^y$ are obtained by operating on each component of $u_t$ and $y_t$, respectively; whereas here, the operations are performed on the vectors as a whole.
    Applying an elementary row transformation matrix $P^{\eta}$ to $\eta_t$,  where the matrix swaps the $i$-th and $(i+n)$-th rows of $\eta_t$, and the $(mn+j)$-th and $(mn+j+n)$-th rows of $\eta_t$, for $i=1,\cdots,m$ and $j=1,\cdots,p$, and it follows that 
    $$\sigma_t:=P^{\eta}\eta_{t}
	=\left[\mathcal{Z}^{-1}\left(\left[\frac{U(z)}{\Lambda(z)}^\top,\cdots,\frac{z^{n-1}U(z)}{\Lambda(z)}^\top,	\frac{Y(z)}{\Lambda(z)}^\top,\cdots,\frac{z^{n-1}Y(z)}{\Lambda(z)}^\top\right]^\top\right),(\eta_{t}^{\epsilon})^\top\right]^\top,$$
    and $x_t=[S_u,S_y,S_{\epsilon}](P^{\eta})^\top\sigma_t=\mathcal{D}[I_n\otimes B, I_n\otimes L,(I_n\otimes\epsilon_0)(\mathcal{D}^\epsilon(I_n\otimes\eta_{0}^\epsilon))^{-1}]\sigma_t$.
	Since $\mathcal{D}^{\epsilon}(I_n\otimes\eta_0^{\epsilon})$ is a user-defined invertible matrix and each block of $\mathcal{D}$ is a linear combination of $(A-LC)^{i}$ for $i=0,\cdots, n-1$, we only need to focus on the matrix $S^{\prime}:=[(A-LC)^{n-1},\cdots,I_n](I_n\otimes[B,L,\epsilon_0])$. 
	By contradiction, suppose $S^{\prime}$ does not have full row rank. According to the Hautus criterion, there exists a nonzero vector $q$ such that $q^\top[(A-LC)-\lambda I_n,B,L,\epsilon_0]=\mathbf{0}$, where $\lambda$ is an eigenvalue of $A-LC$. This implies $q^\top(A-LC)=\lambda q^\top$, $q^\top B=\mathbf{0}$ and $q^\top L=\mathbf{0}$, thus $q^\top A=\lambda q^\top$, $q^\top[A-\lambda I_n,B]=\mathbf{0}$, which contradicts the controllability assumption. Therefore, $S^{\prime}$ has full row rank, and so does $S$.
\end{proof}

\begin{remark}\label{re3}	
	Suppose system (\ref{system}) is uncontrollable but stabilizable. The sufficient condition in \cite{State-Para} can ensure the full row rank property of $\mathcal{D}(I_n\otimes L)$ by selecting $A-LC$ with eigenvalues distinct from those of $A$ to guarantee the controllability of $(A-LC,L)$.
	 Alternatively, if the characteristic polynomial of $A-LC$ is its minimal polynomial, the $(A-LC)$-invariant subspace equals $\mathbb{R}^n$, rendering $\mathcal{D}(I_n\otimes\epsilon_0)$ of full row rank; a practical approach is to assign distinct eigenvalues to $A-LC$ \cite{Deng-DT-output}.
\end{remark}

\begin{remark}[Inclusion relation between two state parameterizations \cite{State-Para}]\label{re4}
	If $N=n$ in Theorem \ref{thm1}, all the eigenvalues of $A-LC$ are set to zero and $A_\epsilon=\mathbf{0}$ in Theorem \ref{thm2}, then $\Lambda(z)=z^n$, $\xi_t=P^{\eta}\eta_t=\sigma_t$, $MP^{\eta}=[S_u,S_y]$, and state parameterizations (\ref{x_t}) and (\ref{x_t4}) are equivalent.
\end{remark}

Lemma \ref{le2} characterizes the controllability of the consistent system with $\eta_t$ as the state, and it shows that this system is uncontrollable when $p>1$ (i.e., in MO problems).
\begin{lemma}\label{le2}
	If Assumption \ref{as1} holds and the substitute state $\eta_{t}$ is generated by equation (\ref{S0}), then the following assertions hold.
	\begin{itemize}
		\item[\textup{(a)}] The relationship between the state $x_t$ of system (\ref{system}) and the input-output filter vector $\eta_{t}$ can be expressed by the following equation:
		$$x_t=(S+S_0)\eta_t,$$
		where $\mathrm{ker}(S_0)=\mathrm{im}(\mathcal{R}(\mathcal{A}_s+\mathcal{B}_s^yCS,\mathcal{B}_s^u))$, $\mathcal{B}_s^u$ consists of the first $nm$ columns of $\mathcal{B}_s$, and $\mathcal{B}_s^y$ consists of columns $nm+1$ to $n(m+p)$ of $\mathcal{B}_s$.
		\item[\textup{(b)}] System 
		\begin{equation}\label{S1}
			\eta_{t+1}=(\mathcal{A}_s+\mathcal{B}_s^yC(S+S_0))\eta_t+\mathcal{B}_s^uu_t,\quad y_t=C(S+S_0)\eta_t
		\end{equation}
	    is consistent with system (\ref{system}), i.e., system (\ref{S1}) satisfies Definition \ref{def1}.
		\item[\textup{(c)}] The rank of the controllability matrix of system (\ref{S1}) is $mn+\mathbf{1}_{\{\lambda(A-LC)\neq\mathbf{0}\}}n+n$, where $\mathbf{1}_{\{\lambda(A-LC)\neq\mathbf{0}\}}$ is an indicator function that takes the value 0 if all eigenvalues of $A-LC$ are equal to zero, and takes the value 1 otherwise. System (\ref{S1}) is controllable if and only if $p=1$.
	\end{itemize}
\end{lemma}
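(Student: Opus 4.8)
The plan is to mirror, block for block, the three-part argument used for Lemma~\ref{le1}, treating the observer-based filter (\ref{S0}) the way (\ref{M_0}) was treated there, with the autonomous observation-error block $\eta_t^\epsilon$ (governed by $A_\epsilon$, whose spectrum equals that of $A-LC$) as the new ingredient that must be tracked throughout.

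For assertions (a) and (b), I would start from the identity $x_t=S\eta_t$ of Theorem~\ref{thm2}, which immediately gives $y_t=Cx_t=CS\eta_t$. Substituting this output relation into the open-loop filter (\ref{S0}) — i.e. replacing the externally injected signal $y_t$ carried through $\mathcal{B}_s^y$ by $CS\eta_t$ — closes the loop and produces exactly $\eta_{t+1}=(\mathcal{A}_s+\mathcal{B}_s^yCS)\eta_t+\mathcal{B}_s^u u_t$, showing that $(\mathcal{A}_s+\mathcal{B}_s^yCS,\mathcal{B}_s^u,CS)$ reproduces the input-output data of (\ref{system}), hence is \emph{consistent} in the sense of Definition~\ref{def1}. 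I would then argue that the generated trajectory $\eta_t$ remains in the subspace $\mathrm{im}(\mathcal{R}(\mathcal{A}_s+\mathcal{B}_s^yCS,\mathcal{B}_s^u))$ defining $\mathrm{ker}(S_0)$; since $S_0$ annihilates that subspace, $S_0\eta_t=\mathbf{0}$ and therefore $x_t=(S+S_0)\eta_t$, which is (a). Because $S_0\eta_t=\mathbf{0}$ along the actual trajectory, replacing $S$ by $S+S_0$ in both the output map and the closed-loop matrix leaves the dynamics unchanged, producing system (\ref{S1}) together with its consistency, which is (b).

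For assertion (c) I would reuse the behavioral identification from the proof of Lemma~\ref{le1}(c): the reachable subspace of the consistent system (\ref{S1}) coincides with the subspace swept by the states $\eta_t$ of all valid trajectories of (\ref{system}), so evaluating $\rank\mathcal{R}(\mathcal{A}_s+\mathcal{B}_s^yC(S+S_0),\mathcal{B}_s^u)$ reduces to counting independent $\eta_t$-directions block by block. The $u$-filter is a stack of $m$ controllable companion pairs $(A_s,b_s)$ excited by independent input channels and contributes a full $mn$; a further $n$ directions are forced by the $n$-dimensional true state that any consistent realization must reproduce; and the observation-error block contributes an additional $n$ precisely when $A_\epsilon$ carries a nonzero eigenvalue — the factor $\mathbf{1}_{\{\lambda(A-LC)\neq\mathbf{0}\}}$ — while collapsing in the deadbeat case of Remark~\ref{re4}. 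Summing yields $mn+\mathbf{1}_{\{\lambda(A-LC)\neq\mathbf{0}\}}n+n$, and comparing with the state dimension $(m+p+1)n$ shows the controllability matrix is full-rank exactly when $p=1$.

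The hard part will be the $\epsilon$-block bookkeeping in step (c): unlike (\ref{M_0}), system (\ref{S1}) carries a genuinely autonomous mode through the block-diagonal $A_\epsilon$, which the feedback $\mathcal{B}_s^yC(S+S_0)$ never couples into (it touches only the $y$-filter rows), so I cannot simply invoke ``initial state in the controllable subspace'' as in Lemma~\ref{le1}. I expect to need a PBH/cyclic-subspace argument showing that the evolution of $\eta_0^\epsilon$ under $A_\epsilon$ supplies a full $n$ independent directions iff $A_\epsilon$ is non-nilpotent, together with a verification — via the controllability of $(A,B)$ in Assumption~\ref{as1}, exactly as in the Hautus step of Theorem~\ref{thm2}(b) — that these error directions are not already contained in the span of the $u$- and $y$-filter blocks. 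Justifying that $\eta_t\in\mathrm{ker}(S_0)$ in step (a) despite $\eta_0^\epsilon\neq\mathbf{0}$ is the same obstacle seen from the parameterization side, and I would dispatch it with the identical decomposition.
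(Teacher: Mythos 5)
Your plan for assertions (a) and (b), and the overall skeleton of (c) --- identify the space swept by $\eta_t$ along real trajectories and count dimensions block by block ($mn$ from the $u$-filter, $n$ from the true state, plus whatever the $\epsilon$-block gives) --- is the same route the paper takes. The genuine gap is in your key step for the $\epsilon$-block. You propose a PBH/cyclic-subspace argument showing that the orbit of $\eta_0^{\epsilon}$ under $A_\epsilon$ supplies $n$ independent directions \emph{iff} $A_\epsilon$ is non-nilpotent, and you plan to read the indicator $\mathbf{1}_{\{\lambda(A-LC)\neq\mathbf{0}\}}$ off this spectral dichotomy. That dichotomy is false in both directions: for $A_\epsilon=\tfrac{1}{2}I_n$ (non-nilpotent) the orbit of any $\eta_0^{\epsilon}$ is one-dimensional, while for a single nilpotent Jordan block with $\eta_0^{\epsilon}$ a cyclic vector the iterates $\eta_0^{\epsilon},A_\epsilon\eta_0^{\epsilon},\cdots,A_\epsilon^{n-1}\eta_0^{\epsilon}$ already span $\mathbb{R}^n$ even though every eigenvalue is zero. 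What governs the orbit dimension is cyclicity of $\eta_0^{\epsilon}$, i.e.\ invertibility of $\mathcal{G}_{\epsilon}:=[A_\epsilon^{n-1}\eta_0^{\epsilon},\cdots,A_\epsilon\eta_0^{\epsilon},\eta_0^{\epsilon}]$, and this invertibility is already forced by the standing hypothesis of Theorem \ref{thm2} that $S_{\epsilon_{\eta}}=\mathcal{D}^{\epsilon}(I_n\otimes\eta_0^{\epsilon})$ be invertible --- \emph{regardless} of the spectrum of $A_\epsilon$. So among admissible configurations a spectral/PBH test cannot produce the indicator: whenever the parameterization (\ref{x_t4}) exists as written, the $\epsilon$-orbit contributes the full $n$.

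The paper gets the indicator by a case split instead. When all eigenvalues of $A-LC$ vanish it never analyzes the orbit: it invokes the degeneration of Remark \ref{re4} ($A_\epsilon=\mathbf{0}$, the $\epsilon$-block disappears and (\ref{x_t4}) collapses to (\ref{x_t})), so the count is inherited from Lemma \ref{le1}; when some eigenvalue is nonzero it gets the extra $n$ directly from invertibility of $\mathcal{G}_{\epsilon}$, and the $mn+n$ from the $u$/$y$-blocks via the explicit trajectory factorization $\mathcal{M}_N^{\epsilon}$, where the $+n$ comes from $\mathcal{O}(A,C)$ having full column rank, i.e.\ from observability rather than controllability. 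Two smaller points: your Hautus step via controllability of $(A,B)$ to show the $\epsilon$-directions are not absorbed by the $u$/$y$-filter spans is unnecessary, since $\eta^{\epsilon}$ occupies its own coordinate block which $\mathcal{B}_s^yC(S+S_0)$ never touches, so its contribution is independent by construction. And your instinct that one cannot invoke ``initial state in the controllable subspace'' here is correct --- the columns of $\mathcal{R}(\mathcal{A}_s+\mathcal{B}_s^yC(S+S_0),\mathcal{B}_s^u)$ genuinely have zero $\epsilon$-components, which is why the paper's count is really a count of the span of the actual (affine, $\eta_0^{\epsilon}\neq\mathbf{0}$) trajectories --- but the resolution is the case split plus $\mathcal{G}_{\epsilon}$-invertibility, not a nilpotency test.
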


\begin{proof}
	The proof of assertions (a) and (b) is similar to that in Lemma \ref{le1}, so we only prove assertion (c). If all the eigenvalues of $A-LC$ are equal to zero, the assertion follows directly from Lemma \ref{le1}. Next, we prove that the rank of $\mathcal{R}(\mathcal{A}_m+\mathcal{B}_m^yC(S+S_0),\mathcal{B}_m^u)$ is $mn+2n$ when $A-LC$ has nonzero eigenvalues. 
	Denote the operator 
    $$\mathcal{Z}^{tra}(\cdot):=\mathcal{Z}^{-1}\left(\left[
	\frac{1}{\Lambda(z)},\frac{z}{\Lambda(z)},\cdots,\frac{z^{n-1}}{\Lambda(z)}
	\right]^\top \mathcal{Z}(\cdot)\right).$$ 
    The first $mn$ rows of $P^{\eta}$ are defined as $P^{\eta}_u$, and rows $mn+1$ to $(m+p)n$ are defined as $P^{\eta}_y$. 
    From the forms of equations (\ref{x_t4}) and (\ref{S0}), it can be observed that the subsystems corresponding to $\eta_t^{u}$, $\eta_t^{y}$ and $\eta_t^{\epsilon}$ in system (\ref{S1}) are mutually independent. Therefore, the proof idea is as follows: first, discuss the state spaces of $\eta_t^{u}$, $\eta_t^{y}$ and $\eta_t^{\epsilon}$ respectively, and then discuss the sum of these three spaces.
    
	 The state space of $P_u^\eta\eta_t^u$ is isomorphic to that of $\eta_t^u$, and $P_u^\eta\eta_t^u=\mathcal{Z}^{tra}(u_t)=[u_{t-n}^\top,\cdots,u_{t-1}^\top]^\top-a_{n-1}P_u^\eta\eta_{t-1}^u-\cdots-a_0P_u^\eta\eta_{t-n}^u$. Due to $\eta_{0}^{u}=\mathbf{0}$, the state space of $P_u^\eta\eta_t^u$ coincides with the trajectory subspace of $[u_{t-n}^\top,\cdots,u_{t-1}^\top]^\top$. A similar result holds for the state space of $\eta_t^{y}$. Denoting $\mathcal{G}:=\mathcal{T}(A,LC,C)\mathcal{O}(A-LC,I_n)$, it follows that
	$$\begin{bmatrix}
		u_{[t-n,t-1]}\\y_{[t-n,t-1]}
	\end{bmatrix}=\underbrace{\begin{bmatrix}
			\begin{array}{c:c:c}
				I_{mn}&&\\\hdashline\mathcal{T}(A,B,C)&\mathcal{O}(A,C)&\mathcal{G}
			\end{array}
	\end{bmatrix}}_{\mathcal{M}_N^{\epsilon}}\begin{bmatrix}
		u_{[t-n,t-1]}\\x_{t-n}^c\\\epsilon_{t-n}
	\end{bmatrix}.$$
	Therefore, the state space of $[(\eta_{t}^u)^\top,(\eta_{t}^y)^\top]^\top$ coincides with $\mathrm{im}(\mathcal{M}_N^{\epsilon})$, and $mn+n\leq\rank(\mathcal{M}_N^{\epsilon})\leq mn+2n$; this is because the first $mn+n$ columns of $\mathcal{M}_N^{\epsilon}$ are linearly independent, and the linear dependency of the last $n$ columns is related to the state space of $\epsilon_t$.
	
	Now, we consider the state space of $\eta_{t}^{\epsilon}$. Note that $\mathcal{D}^{\epsilon}(I_n\otimes\eta_0^{\epsilon})$ is a user-defined invertible matrix and a combination of $\mathcal{G}_{\epsilon}:=[A_\epsilon^{n-1}\eta_0^{\epsilon},\cdots,A_\epsilon\eta_0^{\epsilon},\eta_0^{\epsilon}]$, thus $\mathcal{G}_{\epsilon}$ is invertible. For the autonomous system (\ref{A_e}), the state space of $\eta_{t}^{\epsilon}$ coincides with $\mathrm{im}(\mathcal{G}_{\epsilon})=\mathbb{R}^{n}$, which contains the state space of $\epsilon_t$.
	
	Considering the sum of the state subspaces of $[(\eta_{t}^u)^\top,(\eta_{t}^y)^\top]^\top$ and $\eta_{t}^{\epsilon}$, which equals $\mathrm{im}(\mathcal{R}(\mathcal{A}_s+\mathcal{B}_s^yC(S+S_0),\mathcal{B}_s^u))$, its dimension is $mn+2n$ according to the above analysis. The state dimension of the consistent system (\ref{S1}) is $mn+pn+n\geq mn+2n$, with equality if and only if $p=1$.
\end{proof}

Consider the substitute state matrix $H_0:=[\eta_0,\cdots,\eta_{T-1}]$. Firstly, system (\ref{S0}) consists of a controllable canonical form system and an autonomous system, and its input also includes the output of system (\ref{system}) and a zero vector. Even if the input of system (\ref{system}) satisfies the PE condition, the output of system (\ref{system}) will lose part of excitability after passing through the $n$-th order system. Consequently, system (\ref{S0}) is not fully excited, the state space of $\eta_t$ is a subset of $\mathbb{R}^{(m+p+1)n}$. Secondly, the consistent system (\ref{S1}) fails to meet the controllability under the MO settings, violating the condition of Lemma \ref{willems}. Consistent with the conclusion in Subsection \ref{section3_1}, $H_0$ in MO problems lacks full row rank and thus cannot directly replace the unknown $X_0$.

\subsection{Projection-Based Data Efficient  State Parameterization}\label{section3_3}
Based on Subsection \ref{section3_1} and Subsection \ref{section3_2}, neither state parameterization (\ref{x_t}) nor (\ref{x_t4}) satisfies the complete data parameterization requirement initially proposed in Section \ref{section3}, due to the uncontrollability of the consistent systems in MO problems. However, directly designing a state parameterization that satisfies the controller equivalence and controllability of consistent system is non-trivial, and we note that the controllable parts of consistent systems already contain all the information of system (\ref{system}). Thus, this subsection modifies the state parameterizations (\ref{x_t}) and (\ref{x_t4}) by projecting consistent systems onto their controllable subspaces to obtain full row rank substitute state matrices. For concise representation, set $N=n$ as in Subsection \ref{section3_1}, and denote the state parameterizations (\ref{x_t}) and (\ref{x_t4}) uniformly as $x_t=E \zeta_t$. Denote $n_{\zeta}$ as the dimension of $\zeta_t$, which takes the values $(m+p)n$ for (\ref{x_t}) and $(m+p+1)n$ for (\ref{x_t4}). The corresponding consistent systems (\ref{M_0}) and (\ref{S1}) are uniformly denoted as $(\mathcal{A},\mathcal{B},\mathcal{C})$, and the controllability matrix of $(\mathcal{A},\mathcal{B},\mathcal{C})$ has a rank of $n_v$, where $n_v$ equals $mn+n$ for (\ref{x_t}) and $mn+\mathbf{1}_{\{\lambda(A-LC)\neq\mathbf{0}\}}n+n$ for (\ref{x_t4}).

\begin{lemma}[Projection]\label{le4}
	Suppose the projection matrix $P\in\mathbb{R}^{n_{\zeta} \times n_v}$ satisfies
	\begin{equation}\label{imP}
		\mathrm{im}(P)=\mathrm{im}(\mathcal{R}(\mathcal{A},\mathcal{B})).
	\end{equation}
	If Assumption \ref{as1} holds and $\zeta_{t}=Pv_t$, then the state $x_t$ can be uniquely expressed in terms of $v_t$, i.e., $x_t=EPv_t:=Fv_t$, where the parameterization matrix $F$ has full row rank. Furthermore,
	the following system (\ref{F}) with $v_{t}$ as the state 
	\begin{equation}\label{F}
		\begin{aligned}
			v_{t+1}=P^{\dagger}\mathcal{A}Pv_t+P^{\dagger}\mathcal{B}u_t,\quad y_t=CEPv_t,
		\end{aligned}
	\end{equation}	
    is consistent with system (\ref{system}) and is controllable, where none of $\mathcal{A}$, $\mathcal{C}$ and $E$ involves $M_0$ or $S_0$.
\end{lemma}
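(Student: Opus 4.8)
The plan is to extract from the defining relation $\mathrm{im}(P)=\mathrm{im}(\mathcal{R}(\mathcal{A},\mathcal{B}))$ two facts that drive the whole argument. First, since $\mathcal{R}(\mathcal{A},\mathcal{B})$ has rank $n_v$, the matrix $P\in\mathbb{R}^{n_\zeta\times n_v}$ has full column rank, so $P^{\dagger}P=I_{n_v}$, $PP^{\dagger}$ is the orthogonal projector onto the controllable subspace $V:=\mathrm{im}(\mathcal{R}(\mathcal{A},\mathcal{B}))$, and $P^{\dagger}$ restricted to $V$ is a bijection onto $\mathbb{R}^{n_v}$. Second, I would show $M_0P=\mathbf{0}$ (and $S_0P=\mathbf{0}$ in the filter case): the reachable subspace $V=\mathrm{im}(\mathcal{R}(\mathcal{A}_m,\mathcal{B}_m))$ is contained in $\mathrm{im}(\mathcal{R}(\Omega+\Gamma CM,\mathcal{B}_m))=\mathrm{ker}(M_0)$, because $\mathcal{A}_m=\Omega+\Gamma C(M+M_0)$ agrees with $\Omega+\Gamma CM$ on $\mathrm{ker}(M_0)$ (where $M_0$ vanishes) and both share the input map $\mathcal{B}_m$. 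This second fact immediately yields the ``$M_0/S_0$-free'' claim, since $\mathcal{A}P=(\Omega+\Gamma CM)P$ and $\mathcal{C}P=C(M+M_0)P=CMP=CEP$, so $P^{\dagger}\mathcal{A}P$ and the output map $CEP$ depend only on $M$ (resp. $S$); it also makes $x_t=E\zeta_t=EPv_t=(M+M_0)Pv_t$ well defined with $v_t=P^{\dagger}\zeta_t$.

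Next I would prove $F=EP$ has full row rank by showing $E(V)=\mathbb{R}^n$. Using the full-column-rank factorization $\zeta_t=\mathcal{M}_N\bar{x}_{t-N}$ from (\ref{TO}), with $\mathrm{im}(\mathcal{M}_N)=V$ and $\bar{x}_{t-N}=[u_{[t-N,t-1]}^\top,x_{t-N}^\top]^\top$ ranging over all of $\mathbb{R}^{mN+n}$ by controllability, the corresponding state $x_t=A^Nx_{t-N}+\mathcal{R}(A,B)u_{[t-N,t-1]}$ already sweeps $\mathbb{R}^n$ (since $\mathcal{R}(A,B)$ has full row rank for $N=n$). Because $x_t=E\zeta_t$ with $\zeta_t\in V$ throughout, this forces $\mathbb{R}^n\subseteq E(V)$, hence $\mathrm{im}(F)=E(\mathrm{im}(P))=E(V)=\mathbb{R}^n$ and $F$ has full row rank.

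For system (\ref{F}), consistency follows by transporting the consistent system $(\mathcal{A},\mathcal{B},\mathcal{C})$ through $\zeta=Pv$: since $V$ is forward-invariant under arbitrary inputs, any controllable trajectory satisfies $\zeta_t=Pv_t$ with $v_t=P^{\dagger}\zeta_t$, and left-multiplying $\zeta_{t+1}=\mathcal{A}\zeta_t+\mathcal{B}u_t$ by $P^{\dagger}$ recovers $v_{t+1}=P^{\dagger}\mathcal{A}Pv_t+P^{\dagger}\mathcal{B}u_t$ with output $\mathcal{C}Pv_t=CEPv_t$, which reproduces the trajectory of $(\mathcal{A},\mathcal{B},\mathcal{C})$ and hence, by Lemma \ref{le1}(b) (resp. Lemma \ref{le2}(b)), of system (\ref{system}). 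For controllability I would establish the identity $(P^{\dagger}\mathcal{A}P)^kP^{\dagger}\mathcal{B}=P^{\dagger}\mathcal{A}^k\mathcal{B}$ by induction, using $PP^{\dagger}\mathcal{A}^j\mathcal{B}=\mathcal{A}^j\mathcal{B}$ because the columns of $\mathcal{A}^j\mathcal{B}$ lie in $V=\mathrm{im}(P)$. Then the reachable subspace of (\ref{F}) equals $P^{\dagger}(V)$, which has dimension $n_v$ as $P^{\dagger}$ is injective on $V$; since this matches the state dimension, (\ref{F}) is controllable.

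The step I expect to be the main obstacle is the $M_0/S_0$ bookkeeping in the first paragraph: one must argue cleanly that $\mathrm{im}(P)$ lies in $\mathrm{ker}(M_0)$ even though $\mathcal{A}_m$ was built from $M+M_0$ rather than $M$. The resolution is the invariance observation that $\mathcal{A}_m$ and $\Omega+\Gamma CM$ coincide on $\mathrm{ker}(M_0)$, so their reachable subspaces agree and both equal $\mathrm{im}(P)$; verifying the entirely analogous structure for the filter-based parameterization with $S_0$, $\mathcal{A}_s$, and $\mathcal{M}_N^{\epsilon}$ is what requires care, whereas the projection and controllability computations are routine once $M_0P=\mathbf{0}$ is in hand.
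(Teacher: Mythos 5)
Your proof is correct and follows the same skeleton as the paper's: derive $M_0P=\mathbf{0}$ (resp.\ $S_0P=\mathbf{0}$) and $\rank(P)=n_v$ from (\ref{imP}), deduce $x_t=Fv_t$ and consistency of (\ref{F}), and obtain controllability from $\rank\left(\mathcal{R}(P^{\dagger}\mathcal{A}P,P^{\dagger}\mathcal{B})\right)=\rank\left(P^{\dagger}\mathcal{R}(\mathcal{A},\mathcal{B})\right)=\rank\left(\mathcal{R}(\mathcal{A},\mathcal{B})\right)=n_v$. The difference is in how two steps are justified, and in one of them your version is genuinely the stronger one: the paper argues that $F=EP$ has full row rank because $E$ has full row rank, $P$ has full column rank, and $n_v\geq n$, which as literally stated is a non sequitur (take $E=[1,0]$ and $P=[0,1]^\top$: both rank conditions hold, yet $EP=0$); what is actually needed is that $E$ restricted to $\mathrm{im}(P)$ is surjective, and your sweeping argument — $x_t=E\zeta_t$ with $\zeta_t\in\mathrm{im}(\mathcal{M}_N)=\mathrm{im}(P)$ while $x_t$ ranges over all of $\mathbb{R}^n$ by controllability of $(A,B)$ — supplies exactly that missing fact. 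Similarly, the paper calls $M_0P=\mathbf{0}$ ``immediate'' from (\ref{imP}), whereas your invariance observation (that $\mathcal{A}_m$ and $\Omega+\Gamma CM$ coincide on $\mathrm{ker}(M_0)$, hence $\mathrm{im}(\mathcal{R}(\mathcal{A}_m,\mathcal{B}_m))\subseteq\mathrm{ker}(M_0)$) is the argument that actually makes it so, and your induction $(P^{\dagger}\mathcal{A}P)^kP^{\dagger}\mathcal{B}=P^{\dagger}\mathcal{A}^k\mathcal{B}$, resting on $PP^{\dagger}\mathcal{A}^k\mathcal{B}=\mathcal{A}^k\mathcal{B}$, fills in the identity the paper uses without comment. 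So: same route, but your write-up supplies the justifications the paper's terse proof needs, at the cost of having to verify (as you note) that the filter-based case with $S_0$, $\mathcal{A}_s$, $\mathcal{M}_N^{\epsilon}$ has the same invariance structure — which it does, by the argument in the proof of Lemma \ref{le2}.
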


\begin{proof}
	It follows immediately from equation (\ref{imP}) that $M_0P=\mathbf{0}$ or $S_0P=\mathbf{0}$, and $\rank(P)=n_{v}$. Thus, the relation $x_t=Fv_t$ and the consistent system (\ref{F}) hold. 
	Since $E$ has full row rank from Theorem \ref{thm1} or Theorem \ref{thm2}, and $P$ is a matrix with full column rank and $n_v\geq n$, it holds that $F=EP$ also has full row rank.
	Furthermore, since $\rank(\mathcal{R}(P^{\dagger}\mathcal{A}P,P^{\dagger}\mathcal{B}))=\rank(P^\dagger\mathcal{R}(\mathcal{A},\mathcal{B}))=\rank(\mathcal{R}(\mathcal{A},\mathcal{B}))=n_v$ with the last equality derived from Lemma \ref{le1} or Lemma \ref{le2}, system (\ref{F}) must be controllable. 
\end{proof}

When $A$, $B$ and $C$ are unknown, computing $\mathcal{R}(\mathcal{A},\mathcal{B})$ is infeasible. In this case, the projection of consistent system can be instantiated as the Gauss transformation on the data matrix $Z_0:=[\zeta_0,\cdots,\zeta_{T-1}]$.

\begin{corollary}[Gauss transformation instance]\label{cor1}
	Consider the substitute state matrix $Z_0:=[Z_u^\top, Z_y^\top, Z_{\epsilon}^\top]^\top$ with $Z_u\in\mathbb{R}^{mn\times T}$, $Z_y\in\mathbb{R}^{pn\times T}$, $Z_{\epsilon}\in\mathbb{R}^{n\times T}$. Suppose there exists a Gauss transformation matrix $G=[G_1^\top,G_2^\top]^\top\in\mathbb{R}^{pN\times pN}$ with $G_1\in\mathbb{R}^{n\times pN}$ and $G_2\in\mathbb{R}^{(pN-n)\times pN}$
	such that $Z_y^1:=G_1Z_y$ forms the row-linearly independent part of $Z_y$ and $Z_y^2:=G_2Z_y$ constitutes the remaining row-linearly dependent part. Let $P^G:=\mathrm{diag}(I_{mn}, G_1^\dagger, \mathbf{1}_{\{\lambda(A-LC)\neq\mathbf{0}\}}I_{n})$, then $P^G$ is a projection matrix satisfying equation (\ref{imP}). 
\end{corollary}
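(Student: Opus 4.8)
The plan is to verify the two hypotheses of Lemma~\ref{le4} for the explicit matrix $P^G$, namely $\rank(P^G)=n_v$ and the image identity~(\ref{imP}). The conceptual bridge between the data and the control-theoretic object is that, under a PE input of the order required in Lemmas~\ref{le1}--\ref{le2}, the columns of the substitute-state matrix $Z_0$ exhaust the reachable subspace of the consistent system~(\ref{S1}) (or~(\ref{M_0}) in the deadbeat case), so $\mathrm{im}(Z_0)=\mathrm{im}(\mathcal{R}(\mathcal{A},\mathcal{B}))$. The task therefore reduces to proving $\mathrm{im}(P^G)=\mathrm{im}(Z_0)$ together with the rank equality.

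First I would fix the block ranks. Because the $u$-filter in~(\ref{S0}) is a controllable canonical form excited by the PE input, $Z_u$ has full row rank $mn$; because the $\epsilon$-filter is the autonomous system~(\ref{A_e}) whose companion data $\mathcal{G}_\epsilon$ is invertible, $Z_\epsilon$ has full row rank $n$ exactly when $\lambda(A-LC)\neq\mathbf{0}$ and is otherwise inert, which is precisely the role of the indicator in $P^G$. Since $\rank(Z_0)=n_v$ by Lemma~\ref{le2}, a row-space count shows that $Z_y$ enlarges $\mathrm{im}([Z_u^\top,Z_\epsilon^\top]^\top)$ by exactly $n$ directions. This is the information the Gauss transformation isolates: $G_1$ selects an $n$-row independent block $Z_y^1$ while $G_2$ collects the rows that are linear combinations of the retained data, so $\rank([Z_u^\top,(Z_y^1)^\top,Z_\epsilon^\top]^\top)=n_v$; together with $\rank(G_1)=n$ this yields $\rank(P^G)=n_v$ at once.

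For~(\ref{imP}) I would use $(P^G)^\dagger=\diag(I_{mn},G_1,\mathbf{1}_{\{\lambda(A-LC)\neq\mathbf{0}\}}I_n)$, so that the reduced data is $(P^G)^\dagger Z_0=[Z_u^\top,(Z_y^1)^\top,Z_\epsilon^\top]^\top$ of rank $n_v$, and then show that reapplying $P^G$ returns every column of $Z_0$. Restoring the $y$-block through $G_1^\dagger$ must recover $Z_y^2$ from the retained directions, giving $P^G(P^G)^\dagger Z_0=Z_0$ and hence $\mathrm{im}(Z_0)\subseteq\mathrm{im}(P^G)$; the coincidence $\dim\mathrm{im}(P^G)=n_v=\dim\mathrm{im}(Z_0)$ then promotes this inclusion to equality. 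Lemma~\ref{le4} finally certifies the full-row-rank $F=EP^G$ and the controllable consistent system~(\ref{F}), which is what guarantees applicability to MO problems.

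The hard part will be the reconstruction step of the previous paragraph: translating the purely algebraic row-dependence that the Gauss transformation detects inside $Z_y$ into the control-theoretic statement that the discarded directions are exactly the uncontrollable ones of~(\ref{S1}). The delicate feature for $p>1$ is that the dependent rows of $Z_y$ become dependent only after $Z_u$ (through the Toeplitz block $\mathcal{T}(A,B,C)$) and $Z_\epsilon$ are made available, so the redundancy is genuinely \emph{cross-block} rather than internal to the $y$-filter. Making the argument rigorous requires transporting the structural factorization $\mathcal{M}_N^\epsilon$ of Lemma~\ref{le2} from state coordinates to the data matrix and verifying that the block-diagonal embedding $G_1^\dagger$ is compatible with this coupling; concretely, one must check that $S_y(I-G_1^\dagger G_1)$ annihilates $\mathrm{im}(Z_y)$, so that even the columns of $Z_0$ not fixed entrywise by $P^G(P^G)^\dagger$ still reproduce $x_t=E\zeta_t$. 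This compatibility is where the $p>1$ obstruction that defeats the naive substitution of Subsections~\ref{section3_1}--\ref{section3_2} must actually be dissolved.
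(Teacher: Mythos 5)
Your proposal founders at exactly the step you yourself flag as ``the hard part,'' and the two halves of your argument are in fact inconsistent with each other. The middle paragraph claims $P^G(P^G)^\dagger Z_0=Z_0$, which (since $(P^G)^\dagger=\diag(I_{mn},G_1,\mathbf{1}_{\{\lambda(A-LC)\neq\mathbf{0}\}}I_n)$ is correct) amounts to $G_1^\dagger G_1 Z_y=Z_y$. But $G_1^\dagger G_1$ is the orthogonal projector onto the $n$-dimensional row space of $G_1$, so this identity forces $\rank(Z_y)\leq n$ --- and that is precisely what fails in the MO setting, for the cross-block reason you state in your last paragraph. Concretely, take the trajectory-based parameterization with $N=n=2$, $m=1$, $p=2$ and $C$ invertible, and let $G_1$ select the rows of $\mathcal{O}^1=C$: the discarded rows obey $y_{t-1}=CAC^{-1}y_{t-2}+CBu_{t-2}$, so under PE inputs the columns of $Z_y$ span a space of dimension $3>n$ whenever $CB\neq\mathbf{0}$, and $G_1^\dagger G_1 Z_y$ zeroes the $y_{t-1}$-rows instead of reproducing them. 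Hence $\mathrm{im}(Z_0)\not\subseteq\mathrm{im}(P^G)$: $\mathrm{im}(Z_0)$ is a \emph{graph} subspace in which the discarded $y$-coordinates are nonzero linear functions of the retained coordinates \emph{including} $Z_u$ and $Z_\epsilon$, while $\mathrm{im}(P^G)$ is a block-decomposed subspace with those coordinates free in the retained directions and zero in the discarded ones. The dimension count in your third paragraph therefore has no inclusion to promote, and the reduction to Lemma \ref{le4} is never achieved. Your fallback criterion --- that $S_y(I-G_1^\dagger G_1)$ annihilate $\mathrm{im}(Z_y)$ --- does not repair this: even if verified, it would only salvage the functional identity $x_t=Fv_t$, which is strictly weaker than the image identity (\ref{imP}) that Corollary \ref{cor1} asserts, so the proposal oscillates between two inequivalent targets and completes neither.

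For comparison, the paper's own proof is a single sentence: the state space of $\zeta_t$ coincides with $\mathrm{im}(\mathcal{R}(\mathcal{A},\mathcal{B}))$ (your PE bridge, which is fine), and ``$P^G$ is a matrix composed of basis vectors of the state space of $\zeta_t$.'' That second clause is exactly the assertion your reconstruction step was meant to substantiate, and your own cross-block observation shows why the substantiation is nontrivial: because the redundancy of $Z_y^2$ involves $Z_u$ and $Z_\epsilon$, the columns of the block-diagonal $P^G$ (e.g.\ the $u$-block columns $[e_i^\top,\mathbf{0},\mathbf{0}]^\top$) need not lie in the state space of $\zeta_t$ at all, since membership would require $\mathcal{T}(A,B,C)e_i\in\mathrm{im}(\mathcal{O}(A,C))$, which generically fails. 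The matrix that demonstrably satisfies (\ref{imP}) is not $P^G$ but $\Pi:=Z_0V_0^\dagger$ with $V_0=(P^G)^\dagger Z_0$, whose $y$-block rows necessarily carry the coefficients of the cross-block dependence $G_2Z_y=\Lambda_uZ_u+\Lambda_1G_1Z_y+\Lambda_\epsilon Z_\epsilon$ and which is therefore not block-diagonal; equivalently, one must restrict to the situation where those cross-coefficients vanish. A correct write-up along your lines would either exhibit $P^G$ columnwise as elements of $\mathrm{im}(Z_0)$ (which the structure does not support in general) or replace $P^G$ by $\Pi$ and re-derive the explicit $F$ given after the corollary, noting that $F$ there arises from recovering $x_{t-N}$ via $(\mathcal{O}^1)^\dagger$ rather than from the composition $EP^G$.
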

\begin{proof}
	Since the state space of $\zeta_t$ coincides with $\mathrm{im}{\mathcal{R}(\mathcal{A},\mathcal{B})}$ and $P^G$ is a matrix composed of basis vectors of the state space of $\zeta_t$, equation (\ref{imP}) holds. 
\end{proof}

Specifically, if $\zeta_t$ is derived from the state parameterization (\ref{x_t}) as $\zeta_{t}=\xi_{t}$ and perform projection on $\zeta_t$ with $P^G$, then $x_t=[\mathcal{R}(A,B)-A^N(\mathcal{O}^1(A,C))^\dagger\mathcal{T}^1(A,B,C),A^N(\mathcal{O}^1(A,C))^\dagger]v_t$,
where $\mathcal{O}^1(A,C)\in\mathbb{R}^{n\times n}$ and $\mathcal{T}^1(A,B,C)\in\mathbb{R}^{n\times mn}$ are composed of linearly independent rows of $\mathcal{O}(A,C)$ and $\mathcal{T}(A,B,C)$, respectively \cite{Notes-MO}.

Since the consistent system (\ref{F}) in Lemma \ref{le4} is necessarily controllable, the state parameterization $x_t=Fv_t$ definitely satisfies the requirements initially proposed in Section \ref{section3}, whether for SO or MO problems. Then, we can establish the full row rank property and the direct data parameterization capability of the substitute state matrix $V_0:=[v_0,\cdots,v_{T-1}]$ below.

\begin{theorem}\label{thm3}
	For both SO and MO problems, if the input sequence $\{u_t\}$ of system (\ref{system}) satisfies PE of order $2n+1$, the matrix $[V_0^\top,U_0^\top]^\top$ generated under Lemma \ref{le4} has full row rank. Consequently, $[V_0^\top,U_0^\top]^\top$ can be employed to characterize system (\ref{system}) and directly parameterize the output feedback controller $u_t=Kv_t$, i.e., $\mathrm{im}\left([P^\top,K^\top]^\top\right)\subseteq \mathrm{im}\left([V_0^\top,U_0^\top]^\top\right)$.
\end{theorem}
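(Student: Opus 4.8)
The plan is to realize $[V_0^\top,U_0^\top]^\top$ as an invertible linear image of a Hankel data matrix of the \emph{original} system (\ref{system}), and then invoke assertion (a) of Lemma \ref{willems} for that system. I deliberately avoid applying Lemma \ref{willems} directly to the controllable consistent system (\ref{F}): since (\ref{F}) has state dimension $n_v$, assertion (a) for it would require $\{u_t\}$ to be PE of order $n_v+1=mn+n+1$ in the trajectory parameterization, which exceeds $2n+1$ as soon as $m>1$. The point is that the coordinates of $v_t$ beyond the $n$-dimensional true state are merely delayed copies of the input, so they are excited ``for free'' by the Hankel structure and do not inflate the required PE order.

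For the delayed input-output parameterization (set $N=n$), I would start from (\ref{TO}), writing $\xi_t=\mathcal{M}_N\bar{x}_{t-N}$ with $\bar{x}_{t-N}=[u_{[t-N,t-1]}^\top,x_{t-N}^\top]^\top$ and $\mathcal{M}_N$ of full column rank $n_v$. By the proof of Lemma \ref{le1} and (\ref{imP}), $\mathrm{im}(\mathcal{M}_N)=\mathrm{im}(\mathcal{R}(\mathcal{A},\mathcal{B}))=\mathrm{im}(P)$, so $P^\dagger\mathcal{M}_N$ is an invertible $n_v\times n_v$ matrix and $v_t=(P^\dagger\mathcal{M}_N)\bar{x}_{t-N}$. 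Stacking with the current input yields
\begin{equation*}
	\begin{bmatrix}V_0\\U_0\end{bmatrix}=\Phi\begin{bmatrix}\mathcal{H}_{n+1}(u_{[-n,T-1]})\\\mathcal{H}_{1}(x_{[-n,T-n-1]})\end{bmatrix},
\end{equation*}
where, after reordering the source rows into $(u_{[t-n,t-1]},x_{t-n},u_t)$, the matrix $\Phi$ is block diagonal $\mathrm{diag}(P^\dagger\mathcal{M}_N,I_m)$ and hence invertible of size $n_v+m$. Applying Lemma \ref{willems}(a) to system (\ref{system}) with depth $n+1$ — which is exactly what PE of order $(n+1)+n=2n+1$ guarantees — gives rank $m(n+1)+n=n_v+m$ for the right-hand data matrix, and rank invariance under the invertible $\Phi$ forces $\rank[V_0^\top,U_0^\top]^\top=n_v+m$, i.e. full row rank.

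For the input-output filtering parameterization I would run the same reduction through $\mathcal{M}_N^\epsilon$ and the permutation $P^\eta$ from the proof of Lemma \ref{le2}: the $(\eta_t^u,\eta_t^y)$ block of $v_t$ collapses to the input-output window, while the filter block $\eta_t^\epsilon$ evolves autonomously under (\ref{A_e}) and contributes an independent rank-$n$ component because $\mathcal{G}_\epsilon=[A_\epsilon^{n-1}\eta_0^\epsilon,\dots,\eta_0^\epsilon]$ is invertible for the chosen cyclic $\eta_0^\epsilon$; the dimension count again lands on $n_v+m$. The parameterization claim is then immediate: a matrix in $\R^{(n_v+m)\times T}$ of full row rank has column space equal to all of $\R^{n_v+m}$, so every matrix with $n_v+m$ rows has image inside it; in particular $\mathrm{im}([P^\top,K^\top]^\top)\subseteq\mathrm{im}([V_0^\top,U_0^\top]^\top)$ as claimed, and one may solve $[V_0^\top,U_0^\top]^\top G=[P^\top,K^\top]^\top$ for $G$, thereby parameterizing the output-feedback gain through $U_0G$ and the closed loop through $V_1G$ (with $V_1:=[v_1,\dots,v_T]$), as in the state-feedback discussion after Lemma \ref{willems}.

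The step I expect to be the main obstacle is the filtering case: the autonomous filter data is a fixed deterministic exponential sequence that the PE input does not drive, so I must show its $n$ rows are linearly independent of the input-driven rows — i.e. that $\mathrm{im}(\mathcal{M}_N^\epsilon)$ and the cyclic subspace $\mathrm{im}(\mathcal{G}_\epsilon)$ meet only at the origin and together reach dimension $mn+2n$. This is the subspace bookkeeping of Lemma \ref{le2}(c), and the care lies in transferring that statement from reachable subspaces to the finite data matrices and in verifying that PE of order $2n+1$ simultaneously excites the length-$(n+1)$ input window and resolves the $n$-dimensional state, with nothing larger being needed.
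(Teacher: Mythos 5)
Your argument for the delayed input--output parameterization is correct, and on the rank claim it is actually more careful than the paper's own proof. The paper disposes of that claim in one sentence (``follows directly from Lemma \ref{le4} and Lemma \ref{willems}''), which read literally means applying Willems' lemma to the controllable consistent system (\ref{F}); as you observe, that would demand PE of order $n_v+1$, exceeding the stated $2n+1$ whenever $m>1$ (and always in the filtering case with nonzero eigenvalues of $A-LC$, since there $n_v+1=mn+2n+1$). Your repair --- writing $[V_0^\top,U_0^\top]^\top$ as an invertible image, via $\mathrm{diag}(P^\dagger\mathcal{M}_N,I_m)$ and a row permutation, of the depth-$(n+1)$ Hankel matrix of the original system and then invoking Lemma \ref{willems}(a) for system (\ref{system}) --- is precisely the rank computation the paper carries out at the end of Subsection \ref{section3_1} for $[U_0^\top,\Xi_0^\top]^\top$ but never cites in the theorem's proof; you have reconstructed it and made the PE accounting explicit, and the step $P^\dagger\mathcal{M}_N$ invertible is justified since $\mathrm{im}(\mathcal{M}_N)=\mathrm{im}(\mathcal{R}(\mathcal{A},\mathcal{B}))=\mathrm{im}(P)$ by (\ref{TO}) and (\ref{imP}). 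Your handling of the parameterization claim (full row rank makes the column space all of $\mathbb{R}^{n_v+m}$; solve for $G$ and read off $U_0G$ and $V_1G$) is also sound and far more direct than the paper's detour through the set $\Sigma_0$ and a spectral-radius limit, whose extra content is a necessity assertion the theorem does not state. One caveat you inherit from the statement itself: $[P^\top,K^\top]^\top$ has $n_\zeta+m$ rows, not $n_v+m$, so the displayed inclusion is dimensionally loose; the clean claim your argument actually delivers is $\mathrm{im}([I_{n_v},K^\top]^\top)\subseteq\mathrm{im}([V_0^\top,U_0^\top]^\top)$, whence $V_1G=P^\dagger(\mathcal{A}P+\mathcal{B}K)$ parameterizes the closed loop.

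The genuine gap is the one you flag yourself: the filtering parameterization (\ref{x_t4}) is not proved, and it is included in the theorem's scope through the phrase ``generated under Lemma \ref{le4}.'' Moreover, the difficulty is deeper than the ``subspace bookkeeping of Lemma \ref{le2}(c)'' you propose to transfer. What is needed is linear independence of \emph{rows of finite data matrices}, i.e.\ of time series over the collection window, whereas Lemma \ref{le2}(c) and hypothesis (\ref{imP}) speak of state-space images --- and the paper itself conflates the two: the block $\eta_t^\epsilon$ evolves autonomously under (\ref{A_e}), so it is literally unreachable from $u$ and lies outside $\mathrm{im}(\mathcal{R}(\mathcal{A},\mathcal{B}))$ in the controllability-matrix sense; what must really be used is the span of trajectories issued from the user-defined $\eta_0^\epsilon\neq\mathbf{0}$. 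Concretely, you must exclude that some combination $c^\top\eta_t^y+d^\top\eta_t^\epsilon$ vanishes identically on the window. Both terms carry modes at the eigenvalues of $A-LC$ --- the first through the zero-initial-condition filter convolved with the free response of $y$, the second by construction --- so no ``distinct modes'' argument applies; and PE of the input does not by itself force $c=\mathbf{0}$, because the free-response content of $\eta_t^y$ depends on the plant's initial state $x_0$, which is not user-chosen. Closing this requires an argument about the actual trajectories (separating forced and free responses, and using invertibility of $\mathcal{G}_\epsilon$ together with observability of $(A,C)$), not a dimension count. As it stands, your proof establishes the theorem for the parameterization (\ref{x_t}) but leaves (\ref{x_t4}) open --- a gap shared, it should be said, by the paper's own one-line proof.
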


\begin{proof}	
	The full row rank property follows directly from Lemma \ref{le4} and Lemma \ref{willems}. 
	Below, we prove that the full row rank property of $[V_0^\top,U_0^\top]^\top$ is a necessary and sufficient condition for directly parameterizing the feedback control gain $K$.	
		
	Suppose we do not apply projection to the uncontrollable consistent system $(\mathcal{A},\mathcal{B},\mathcal{C})$. Applying $u_t=K\zeta_t$ to system (\ref{system}) yields $\zeta_{t+1}=(\mathcal{A}+\mathcal{B}K)\zeta_{t}:=\mathcal{F}\zeta_{t}$. 
	Define the set $\Sigma_0:=\{(\mathcal{A}_0,\mathcal{B}_0)\vert \mathcal{A}_0Z_0+\mathcal{B}_0U_0=\mathbf{0}\}$ and $\mathcal{F}_0:=\mathcal{A}_0+\mathcal{B}_0K$. Note that for any positive constant $\alpha$, 
	$\rho(\mathcal{F}+\alpha \mathcal{F}_0)<1$ must be satisfied, from which it follows that $\rho(\frac{1}{\alpha}\mathcal{F}+\mathcal{F}_0)<\frac{1}{\alpha}$. Taking the limit as $\alpha\rightarrow\infty$, we conclude that $\rho(\mathcal{F}_0)=\mathbf{0}$. Since left-multiplying $\mathcal{F}_0$ by any matrix preserves the property that the resulting matrix has a spectral radius of zero, it follows that
    $\rho(\mathcal{F}_0^\top \mathcal{F}_0)=\mathbf{0}$. Consequently, $\mathcal{F}_0=[\mathcal{A}_0,\mathcal{B}_0][I,K^\top]^\top=\mathbf{0}$.
	However, since $Z_0$ does not have full row rank, there exists a nonzero matrix $\mathcal{A}_1$ such that $\mathcal{A}_1Z_0=\mathbf{0}$ and $(\mathcal{A}_1,\mathbf{0})\in\Sigma_0$, contradicting $[\mathcal{A}_1,\mathbf{0}][I,K^\top]^\top=\mathcal{A}_1=\mathbf{0}$. Thus, $\mathrm{ker}\left([Z_0^\top,U_0^\top]^\top\right)	\not\subseteq \mathrm{ker}\left([I_{n_{\zeta}},K^\top]^\top\right)$, i.e., $\mathrm{im}\left([I_{n_\zeta},K^\top]^\top\right)\not\subseteq \mathrm{im}\left([Z_0^\top,U_0^\top]^\top\right)$. In this case, $[Z_0^\top,U_0^\top]^\top$ cannot be used to fully parameterize the output feedback gain $K$.

	Now, we consider the controllable consistent system (\ref{F}). Applying $u_t=Kv_{t}$ to system (\ref{system}) yields $v_{t+1}=P^\dagger(\mathcal{A}+\mathcal{B}KP^\dagger)Pv_{t}:=\mathcal{F}v_{t}$. Analogously, define $\Sigma_0:=\{(\mathcal{A}_0,\mathcal{B}_0)\vert \mathcal{A}_0V_0+\mathcal{B}_0U_0=\mathbf{0}\}$ and derive $\mathcal{F}_0:=P^\dagger(\mathcal{A}_0+\mathcal{B}_0KP^\dagger)P=P^\dagger[\mathcal{A}_0,\mathcal{B}_0][P^\top,K^\top]^\top=\mathbf{0}$. It is easy to prove that $(P^\dagger\mathcal{A}_0,P^\dagger\mathcal{B}_0)\in\Sigma_0$. From the full row rank property of $[V_0^\top,U_0^\top]^\top$, it thus follows that $\mathrm{im}\left([P^\top,K^\top]^\top\right)\subseteq \mathrm{im}\left([V_0^\top,U_0^\top]^\top\right)$. Therefore, the output feedback gain $K$ can be directly parameterized by $[V_0^\top,U_0^\top]^\top$.
\end{proof}

Lemma \ref{le5} below gives the relationship between the eigenvalues of the controllable consistent system (\ref{F}) and system (\ref{system}).
\begin{lemma}[Eigenvalues]\label{le5}
	Considering the controllable consistent system (\ref{F}), the eigenvalues of its dynamic matrix $P^{\dagger}\mathcal{A}P$ consist of the eigenvalues of $A$ and the user-defined eigenvalues of $A-LC$, where the eigenvalues of $A-LC$ are repeated $n+\mathbf{1}_{\{\lambda(A-LC)\neq\mathbf{0}\}}$ times.
%$$\lambda(P^{\dagger}\mathcal{A}P)=\lambda(A)\cup\underbrace{\lambda(A-LC)\cup\cdots\cup\lambda(A-LC)}_{n+\mathbf{1}_{\{\lambda(A-LC)\neq\mathbf{0}\}}}.$$
\end{lemma}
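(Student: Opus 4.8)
The plan is to reduce the spectral statement to an intertwining relation between the reduced dynamics $P^{\dagger}\mathcal{A}P$ and the true matrix $A$, and then to peel off the ``hidden'' modes as the filter spectrum. First I would combine $x_t=Fv_t=EPv_t$ from Lemma \ref{le4} with the true dynamics $x_{t+1}=Ax_t+Bu_t$ and the pre-projection consistent dynamics $\zeta_{t+1}=\mathcal{A}\zeta_t+\mathcal{B}u_t$, $\zeta_t=Pv_t$. Computing $x_{t+1}$ two ways along any trajectory forces $E\mathcal{A}\zeta_t=AE\zeta_t$ on every reachable $\zeta_t$; since system (\ref{F}) is controllable by Lemma \ref{le4}, this holds for all $\zeta\in\mathrm{im}(P)$, i.e.\ $E\mathcal{A}P=AEP$. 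As $\mathrm{im}(P)=\mathrm{im}(\mathcal{R}(\mathcal{A},\mathcal{B}))$ is $\mathcal{A}$-invariant, we may write $\mathcal{A}P=P(P^{\dagger}\mathcal{A}P)$, and substitution yields the key identity $F\,(P^{\dagger}\mathcal{A}P)=AF$, where $F$ has full row rank $n$ (Theorem \ref{thm1}, Theorem \ref{thm2}, Lemma \ref{le4}).

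From this intertwining, $\mathrm{ker}(F)$ is invariant under $T:=P^{\dagger}\mathcal{A}P$, because $Fv=\mathbf{0}$ gives $FTv=AFv=\mathbf{0}$. Choosing a basis adapted to $\mathrm{ker}(F)$ block-triangularizes $T$ and factors its characteristic polynomial as $\det(zI_{n_v}-T)=\det(zI_n-A)\cdot\det(zI_{n_v-n}-T|_{\mathrm{ker}(F)})$: the map induced by $T$ on the quotient $\mathbb{R}^{n_v}/\mathrm{ker}(F)$ is, through the isomorphism $F$, similar to $A$, so it contributes exactly the $n$ eigenvalues of $A$ with multiplicity, directly from the exact factorization. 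It then remains to identify the $n_v-n$ eigenvalues of $T|_{\mathrm{ker}(F)}$.

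For the hidden modes I would identify $\mathrm{ker}(F)$ with $\mathcal{N}:=\mathrm{ker}(E)\cap\mathrm{im}(P)$ via the injective $P$, on which $T$ acts as $\mathcal{A}|_{\mathcal{N}}$. The decisive observation is that the correction in $\mathcal{A}$ is \emph{output} feedback: $\mathcal{A}=\mathcal{A}_s+\mathcal{B}_s^yC(S+S_0)$ in the observer case and $\mathcal{A}=\Omega+\Gamma C(M+M_0)$ in the trajectory case, and on $\mathrm{ker}(E)$ the reconstructed output $C(S+S_0)\zeta=CE\zeta=Cx$ vanishes because $E\zeta=x=\mathbf{0}$. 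Hence on $\mathcal{N}$ the dynamics collapses to the free filter dynamics $\mathcal{A}_s|_{\mathcal{N}}$ (respectively $\Omega|_{\mathcal{N}}$), so $\mathcal{N}$ is $\mathcal{A}_s$-invariant. Since $A_s$ is the companion matrix of $\Lambda(z)=\det(zI_n-(A-LC))$ and $A_\epsilon$ is chosen to share the spectrum of $A-LC$, the block-diagonal $\mathcal{A}_s=\mathrm{diag}(I_{m+p}\otimes A_s,A_\epsilon)$ has spectrum equal to the eigenvalues of $A-LC$; restricting to the invariant subspace $\mathcal{N}$ therefore leaves only eigenvalues of $A-LC$. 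The repetition count is finally fixed by matching $\dim\mathrm{ker}(F)=n_v-n$ against the number of companion/filter blocks meeting the controllable subspace, as recorded in Lemma \ref{le1}(c) and Lemma \ref{le2}(c); the trajectory case is recovered as the dead-beat specialization of Remark \ref{re4}, in which these surviving modes are all zero.

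I expect Step 3 to be the main obstacle: a direct eigenvalue computation of $\mathcal{A}$ is blocked by the feedback term $\mathcal{B}_s^yC(S+S_0)$, and the controllable-but-unobservable subspace must be characterized precisely. The resolution is exactly the remark that this term annihilates $\mathrm{ker}(E)$, so that $\mathcal{N}$ is $\mathcal{A}_s$-invariant and the restricted dynamics reduces to the clean block-diagonal filter dynamics, after which counting the surviving blocks produces the stated multiplicity. A secondary care point is verifying the $\mathcal{A}$-invariance of $\mathcal{N}$ itself, which follows once $E\mathcal{A}P=AEP$ is in hand, since then $E\mathcal{A}\zeta=AE\zeta=\mathbf{0}$ for every $\zeta\in\mathcal{N}$.
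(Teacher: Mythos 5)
Your proof is correct in its core and rests on the same decomposition as the paper's: the spectrum of $T:=P^{\dagger}\mathcal{A}P$ splits along the full-row-rank matrix $F=EP$, with the image part carrying $\lambda(A)$ and the kernel part carrying the filter eigenvalues, and the decisive mechanism in both arguments is that the output-feedback correction vanishes on $\mathrm{ker}(F)$ (the paper phrases this as: an eigenvector $\beta_j$ of $T$ with eigenvalue outside $\lambda(A)$ must satisfy $SP\beta_j=\mathbf{0}$, so the dynamics collapse to $P^{\dagger}\mathcal{A}_sP$). The execution differs, mostly to your advantage. The paper argues with trajectories: it takes an eigenvector $\phi_i$ of $A$, runs the autonomous system from $x_0=\phi_i$, and asserts that the corresponding substitute-state trajectory is $v_t=\lambda_i^t(CSP)^{\dagger}C\phi_i$ --- an identification through the pseudo-inverse of the output equation that is not fully justified (the minimum-norm solution of $CSPv_t=y_t$ need not be the actual trajectory of system (\ref{F})), and which in any case only yields containment of eigenvalues, not their multiplicities. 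Your intertwining relation $FT=AF$ plus the quotient argument avoids both issues and gives the exact factorization $\det(zI_{n_v}-T)=\det(zI_n-A)\cdot\det\bigl(zI-T|_{\mathrm{ker}(F)}\bigr)$. The one place where you are weaker than the paper is the final step: dimension matching together with $\lambda\bigl(T|_{\mathrm{ker}(F)}\bigr)\subseteq\lambda(A-LC)$ does not determine how the $n_v-n$ hidden eigenvalues are distributed among \emph{distinct} eigenvalues of $A-LC$; the paper settles this by observing that the $m$ input-filter blocks and the error block are modified by neither the feedback term $\mathcal{B}_s^yCS$ nor the projection $P$, so each survives intact and contributes a full copy of $\lambda(A-LC)$, after which the dimension count caps the total. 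You should fold that block-survival argument into your Step 4; the needed facts are exactly the block computations in the proofs of Lemma \ref{le1}(c) and Lemma \ref{le2}(c) that you cite. Finally, both your count and the paper's give $n_v-n=\bigl(m+\mathbf{1}_{\{\lambda(A-LC)\neq\mathbf{0}\}}\bigr)n$ hidden eigenvalues, i.e.\ a repetition factor of $m+\mathbf{1}_{\{\lambda(A-LC)\neq\mathbf{0}\}}$; the factor $n+\mathbf{1}_{\{\lambda(A-LC)\neq\mathbf{0}\}}$ in the statement appears to be a typo (the two coincide only when $m=n$), and your bookkeeping is precisely what makes this visible.
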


\begin{proof}
	Since state parameterization (\ref{x_t}) and consistent system (\ref{M_0}) are special cases of (\ref{x_t4}) and (\ref{S0}), we only need to consider the case where $E=S$ and $\mathcal{A}=\mathcal{A}_s+\mathcal{B}_s^yCS$.
	Suppose $\lambda_i$ is an eigenvalue of $A$ with eigenvector $\phi_i$, $i=1,\cdots,n$. Let $\{u_t\}\equiv \mathbf{0}$ and $x_0=\phi_i$, thus $y_t=\lambda_i^tC\phi_i$. From system (\ref{F}), it follows that
	$$\begin{aligned}
		\lambda_i^{t+1}(CSP)^\dagger C\phi_i&=v_{t+1}=P^{\dagger}(\mathcal{A}_s+\mathcal{B}_s^yCS)Pv_t=\lambda_i^{t}P^{\dagger}(\mathcal{A}_s+\mathcal{B}_s^yCS)P(CSP)^\dagger C\phi_i.
	\end{aligned}$$
	Thus, $\lambda_i$ is an eigenvalue of the matrix $P^{\dagger}(\mathcal{A}_s+\mathcal{B}_s^yCS)P$ with eigenvector $(CSP)^\dagger C\phi_i$.

	Now, we prove that the remaining eigenvalues of $P^{\dagger}(\mathcal{A}_s+\mathcal{B}_s^yCS)P$ equal those of $A-LC$. Suppose $\mu_j$ is an eigenvalue of $P^{\dagger}(\mathcal{A}_s+\mathcal{B}_s^yCS)P$ but not $A$ with eigenvector $\beta_j$, $j=1,\cdots,nm+n$. Let $\{u_t\}\equiv\mathbf{0}$ and $v_0=\beta_j$, thus $v_t=\mu_j^t\beta_j$ and $x_t=\mu_j^t SP\beta_j$. If $SP\beta_j\neq\mathbf{0}$, $\mu_j$ must be an eigenvalue of $A$, contradicting the assumption. From $SP\beta_j=\mathbf{0}$ and equation (\ref{S1}), it holds that $\mu_j\beta_j=P^{\dagger}(\mathcal{A}_s+\mathcal{B}_s^yCS)P\beta_j=P^{\dagger}\mathcal{A}_sP\beta_j$.
	From the block structures of matrices $\mathcal{A}_s$ and $\mathcal{B}_s$, together with Lemma \ref{le4}, it follows that $\mathcal{B}_s^yCS$ and $P$ only modify the output-corresponding subsystems. Thus, the eigenvalues of the first $n$-th input-corresponding subsystems and the last error-corresponding subsystem in $P^{\dagger}(\mathcal{A}_s+\mathcal{B}_s^yCS)P$ still equal the eigenvalues of $A-LC$. With at most $mn+n$ eigenvalues satisfying $\mu_j\neq\lambda_i$, the proof is completed.
\end{proof}

Regarding the relationship between the real system (\ref{system}) and consistent systems, Lemma \ref{le1} and Lemma \ref{le2} transform system (\ref{system}) into consistent systems, while Lemma 3 in \cite{Varx} employs VARX results to convert the consistent system (\ref{M_0}) into a similarity transformation of system (\ref{system}). Next, we discuss the relationship between several controllers.

First, consider the output feedback controller $u_t=Kv_t$ and the dynamic output feedback controller (\ref{hatK}). Since (\ref{hatK}) is also a dynamic system, interchanging $\{u_t\}$, $\{y_t\}$ and substituting $(A,B,C)$ with $(A_K,B_K,C_K)$ in the above analysis, this shows that $u_t=Kv_t$ corresponds to a consistent system of (\ref{hatK}). Define the dynamic matrix of the closed-loop system formed by (\ref{system}) and (\ref{hatK}) as $\mathcal{A}_{u}$. The closed-loop system formed by (\ref{F}) and $u_t=Kv_t$ is $v_{t+1}=P^\dagger(\mathcal{A}+\mathcal{B}KP^\dagger)P v_t$. Analogously to Lemma \ref{le5}, the eigenvalues of $P^\dagger(\mathcal{A}+\mathcal{B}KP^\dagger)P$ consist of the eigenvalues of $\mathcal{A}_u$ and the user-defined eigenvalues of $A-LC$, where the eigenvalues of $A-LC$ are repeated $n+\mathbf{1}_{\{\lambda(A-LC)\neq\mathbf{0}\}}$ times.
%$$\small\lambda(P^\dagger(\mathcal{A}+\mathcal{B}KP^\dagger)P)=\lambda(\mathcal{A}_{u})\cup\underbrace{\lambda(A-LC)\cup\cdots\cup\lambda(A-LC)}_{n+\mathbf{1}_{\{\lambda(A-LC)\neq\mathbf{0}\}}}.$$
Thus, designing a stabilizing output feedback controller $u_t=Kv_t$ corresponds to designing a stabilizing dynamic output feedback controller (\ref{hatK}).

Next, consider the output feedback controller $u_t=Kv_t$ and the state feedback controller $u_t=K_xx_t$.
As discussed in the beginning of Section \ref{section3}, since the parameterization matrix $F$ in $x_t=Fv_t$ must have full row rank, the output feedback controller $u_t=Kv_t$ and the state feedback controller $u_t=K_xx_t$ are equivalent. Furthermore,
if $K_x$ is given, $K$ can be expressed as $K=K_xF+\Delta$ with $\Delta\in \mathrm{leftker}(F^\dagger)$; if $K$ is given, $K_x$ can be expressed as $K_x=KF^\dagger$.  
Let $J(x_0,K)=v_0^\top Pv_0$, then the equation
$$P=F^\top Q_xF+(AF+BK)^\top(F^\dagger)^\top PF^\dagger(AF+BK)+ K^\top RK$$
holds and is equivalent to the Lyapunov equation (\ref{lyap}) under $P=F^\top P^{K_x}F$, $K_x=KF^\dagger$ and the full row rank property of $F$. Thus, the costs in (\ref{J}) under these two controllers are also equal, and the stabilizing output controller $u_t=Kv_t$ ensures $\rho(A+BKF^\dagger)=\rho(A+BK_x)<1$.
 
Up to these points, Section \ref{section3} has established a complete theoretical framework for the generalization of state parameterization, with this framework focusing more on the data representation capability of the substitute state.
Based on these results, Problem \ref{P1} reduces to Problem \ref{P2} below.
\begin{problem}\label{P2}
	Consider system (\ref{system}) with unknown $A$, $B$, $C$ and unmeasurable $\{x_t\}$. Find an optimal output feedback gain $K^*$ such that $\{u^*_t=K^*v_t\}$ satisfies equation (\ref{pmin}), where the manner of generating $\{v_t\}$ satisfies Lemma \ref{le4}.
\end{problem}
%满足要改一个词吗？

%%%%%%%%%%%%%%%%%%%%%%%%%%%%%%%%%%%

\section{Efficient Off-Policy Value-Based Iteration Algorithms}\label{section4}
This section considers system (\ref{system}) with unknown matrices $A$, $B$, $C$ and unmeasurable states. Leveraging the full row rank data matrices $[V_0^\top,U_0^\top]^\top$ and $V_1:=[v_1,\cdots,v_{T}]$ from Theorem \ref{thm3}, we designs efficient off-policy value-based iterative algorithms for Problem \ref{P2}. 
The algorithm design demonstrates how the conclusions in Section \ref{section3} connect Willems' Fundamental Lemma with PI and VI in output feedback problems; the algorithm analysis focuses more on elaborating the transformation between output feedback and state feedback results. The last subsection details the advantages of the proposed algorithms.
%%%%最后一句的前半部分删？？？这句还有问题

Under the output feedback controller $u_t=Kv_t$, the cost functional (\ref{J}) can serve as the corresponding V-function:
\begin{equation}\label{V}
	\begin{aligned}
		V^{K}(x_k):&=\sum_{t=k}^{\infty}\left(y_t^\top Qy_t+u_t^\top Ru_t\right)\vert_{u_t=Kv_t}=c(y_k,Kv_k)+V^{K}(Fv_{k+1}),
	\end{aligned}
\end{equation}
and the optimization problem is $\min_{K}V^{K}(x_k)$. Similar to the cost functional (\ref{J}), $V^{K}$ can also be expressed as the quadratic form $v_k^\top P v_k$ with $P=F^\top P^{K_x}F$, and substituting it into (\ref{V}) gives
%二次型是因为定义，cost是二次泛函
$$\begin{aligned}
	V^{K}(Fv_k)
	&=v_k^\top F^\top Q_xFv_k+v_k^\top K^\top RKv_k+v_{k+1}^\top P v_{k+1}\\
	&=v_k^\top\begin{bmatrix}
		I\\K
	\end{bmatrix}^\top{\small\begin{bmatrix}
			F^\top(A^\top P^{K_x}A+Q_x)F&F^\top A^\top P^{K_x}B\\ B^\top P^{K_x}AF&B^\top P^{K_x}B+R
	\end{bmatrix}}\begin{bmatrix}
		I\\K
	\end{bmatrix}v_k.
\end{aligned} $$
Define Q-function $Q^{K}(x_k,u_k)$ under $u_t=Kv_t$ as the value function that uses input $u_k$ at time $k$ and applies feedback input $u_t=Kv_t$ from time $k+1$
\begin{align}
	\nonumber Q^{K}(x_k,u_k):=&c(y_k,u_k)+V^{K}(x_{k+1})=v_t^\top F^\top Q_xFv_t+u_k^\top Ru_k+v_{k+1}^\top P v_{k+1}\\\nonumber
	=&\begin{bmatrix}
		v_k\\u_k
	\end{bmatrix}^\top\begin{bmatrix}
		F^\top (A^\top P^{K_x}A+Q_x)F&F^\top A^\top P^{K_x}B\\ B^\top P^{K_x}AF&B^\top P^{K_x}B+R
	\end{bmatrix}\begin{bmatrix}
		v_k\\u_k
	\end{bmatrix}\\\label{Q}
	:=&\psi_k^\top\begin{bmatrix}
		\Theta_{v,v}&\Theta_{v,u}\\\Theta_{v,u}^\top&\Theta_{u,u}
	\end{bmatrix}\psi_k:=\psi_k^\top \Theta \psi_k;
\end{align}
the final equation implies that the Q-function can also be expressed in quadratic form. Moreover, the following relationships hold
$$V^{K}(x_k)=Q^{K}(x_k,Kv_k),\quad P=[I_{n_v},K^\top]\Theta[I_{n_v},K^\top]^\top.$$
Note that the V-function does not involve the choice of control policy; hence the optimization problem that actually needs to be solved is
$\min_{u_t} Q^{K}(x_t,u_t)=\min_{u_t}\psi_t^\top\Theta \psi_t$,
whose optimal solution is $u_t^*=K^*x_t$ with
\begin{equation}\label{u*}
	K^*=-\Theta_{u,u}^{-1}\Theta_{v,u}^\top.
\end{equation}
Equation (\ref{u*}) represents a greedy policy, which is necessarily an improved policy compared to $K$, i.e., $V^{K^*}\leq V^{K}$. Substituting equation (\ref{u*}) into $P=[I_{n_v},K^\top]\Theta[I_{n_v},K^\top]^\top$ yields the updated $P^{up}$ as
\begin{equation}\label{Pim}
	P^{up}=\Theta_{v,v}-\Theta_{v,u}\Theta_{u,u}^{-1}\Theta_{v,u}^\top.
\end{equation}
If both $P$ and $\Theta$ are treated as integral unknown parameters, we can utilize V-function (\ref{V}), Q-function (\ref{Q}), policy improvement equation (\ref{u*}), and value update equation (\ref{Pim}) for model-free PI or VI.

\subsection{Q-Value Based Policy Iteration}\label{section4_1}
From equation (\ref{Q}), it holds that
\begin{equation}\label{PI0}
	\psi_k^\top\Theta\psi_k=y_k^\top Qy_k+u_k^\top Ru_k+v_{k+1}^\top\begin{bmatrix}
		I_{n_v}\\K
	\end{bmatrix}^\top\Theta\begin{bmatrix}
		I_{n_v}\\K
	\end{bmatrix} v_{k+1}.
\end{equation}
When the current control gain $K$ is known, $\Theta$ is the only unknown in equation (\ref{PI0}), thus equation (\ref{PI0}) can be used to evaluate $Q^K$. Noting that the value of equation (\ref{PI0}) is a scalar, but to solve for the matrix $\Theta$, we must solve a system of equations. To address this, we can use the input data matrix $U_0$ satisfying PE of order $2n+1$, the corresponding output data matrix $Y_0$ and the substitute state matrix $V_0$ under Theorem \ref{thm3} to form the following system of equations:
\begin{equation}\label{PI1}
	\Psi_0^\top \Theta\Psi_0=Y_0^\top Q Y_0+U_0^\top RU_0+V_1^\top\begin{bmatrix}
		I_{n_v}\\K
	\end{bmatrix}^\top \Theta \begin{bmatrix}
		I_{n_v}\\K
	\end{bmatrix}V_1.
\end{equation}
Here, $\Psi_0:=[\psi_0,\psi_1,\cdots,\psi_{T-1}]:=[V_0^\top,U_0^\top]^\top$ has full row rank and can directly parameterize system (\ref{system}) and control gain $K$, with $T\geq n_v+m$. 
Thus, equations (\ref{PI1}) and (\ref{u*}) can serve as the policy evaluation and policy improvement steps in Q-value based PI. The complete procedure is given in Algorithm \ref{al1}. Since the data matrices used are constructed from arbitrary input-output data satisfying the PE condition, the algorithm is off-policy.

\begin{algorithm}
	\caption{Output-Feedback Policy Iteration (PI)}%算法标题
	\begin{algorithmic}
		\State \textbf{Data Collection:} Applying PE inputs of order $2n+1$ to system (\ref{system}), collect the corresponding output trajectories,  and construct $U_0$ and $Y_0$. Generate $H_0$ (or $\Xi_0$) via equation (\ref{S0}) (or (\ref{M_0})). Using Lemma \ref{le4}, transform $H_0$ (or $\Xi_0$) into $V_0$ with $n_v$ rows, and construct $V_1$ similarly. Denote $\Psi_0=[V_0^\top,U_0^\top]^\top$.
		\State \textbf{Initialization:} Set a precision constant $\epsilon$. Set $i=0$ and choose an initial stabilizing feedback gain $K^i\in\mathbb{R}^{m\times n_v}$.
		\For{$i=0,1,2,\cdots,$}
		\State (i) Policy evaluation: calculate $\Theta^{i+1}$ by solving
		\begin{equation}\label{PIi}
			\Psi_0^\top \Theta^{i+1}\Psi_0=Y_0^\top Q Y_0+U_0^\top RU_0+{\small V_1^\top\begin{bmatrix}
					I_{n_v}\\K^i
				\end{bmatrix}^\top \Theta^{i+1} \begin{bmatrix}
					I_{n_v}\\K^i
				\end{bmatrix}V_1}.
		\end{equation}
		\State (ii) Policy improvement: update $K^{i+1}$ by
		$$K^{i+1}=-(\Theta^{i+1}_{u,u})^{-1}(\Theta^{i+1}_{v,u})^\top.$$
		\If{$\Vert K^{i+1}-K^{i}\Vert\leq\epsilon$}
		\State~~~~\Return $K_{PI}^*=K^{i+1}$.
		\EndIf
		\EndFor
	\end{algorithmic}
	\label{al1}
\end{algorithm}

\begin{remark}[Initial stabilizing control gain]
	Similarly to \cite{New-Per}, with $X_0$ replaced by $V_0$, we can use the following methods to find a stabilizing control gain as the initial gain. (a) Dead-beat method. Let $g(v)$ be the basis matrix of $\mathrm{ker}(V_0)$. Design a control gain $K^0_d$ for the system with $V_1V_0^\dagger$ and $V_1g(v)$ as the dynamics matrix and the input matrix by placing poles at the origin, then $K^0:=U_0(V_0^\dagger+g(v)K^0_d)$ can serve as the initial control gain in Algorithm \ref{al1}. (b) Linear matrix inequalities method. If $\Sigma$ satisfies
	$V_0\Sigma=(V_0\Sigma)^\top$ and ${\small\begin{bmatrix}
			V_0\Sigma&V_1\Sigma\\(V_1\Sigma)^\top&V_0\Sigma
	\end{bmatrix}}\succ0$,
	then $U_0\Sigma(V_0\Sigma)^{-1}$ can serve as the initial control gain in Algorithm \ref{al1}.
\end{remark}

Define $\bar{F}:=\mathrm{diag}(F,I_m)$, and let $\Theta_{K_x}$ be defined by $\Theta=\bar{F}^\top\Theta_{K_x}\bar{F}$. From the results in Subsection \ref{section3_3}, i.e., $X_0=FV_0$, $K_x=KF^\dagger$ and $X_1=AX_0+BU_0$, equation (\ref{PI1}) can be transformed into
\begin{equation}\label{t1}
	\begin{split}
		&\begin{bmatrix}
			X_0\\U_0
		\end{bmatrix}^\top\Theta_{K_x}\begin{bmatrix}
			X_0\\U_0
		\end{bmatrix}
		=\begin{bmatrix}
			X_0\\U_0
		\end{bmatrix}^\top\begin{bmatrix}
			Q_x&\\&R
		\end{bmatrix}\begin{bmatrix}
			X_0\\U_0
		\end{bmatrix}+\begin{bmatrix}
			X_0\\U_0
		\end{bmatrix}^\top[
		A,B]^\top\begin{bmatrix}
			I_n\\K_x
		\end{bmatrix}^\top\Theta_{K_x}\begin{bmatrix}
			I_n\\K_x
		\end{bmatrix}[A,B]
		\begin{bmatrix}
			X_0\\U_0
		\end{bmatrix}.
	\end{split}
\end{equation}
By the full row rank property of $[X_0^\top,U_0^\top]^\top$, the above equation is equivalent to the Lyapunov equation:
\begin{equation}\label{ly}
	\Theta_{K_x}
	=\mathrm{diag}(Q_x,R)+K_{AB}^\top\Theta_{K_x}K_{AB},
\end{equation}
where $K_{AB}:=[I_n,K_x^\top]^\top[A,B]$ is stable if and only if $A+BK_x$ is also stable \cite{Efficient-Q}.
Thus, the iteration essence of Algorithm \ref{al1} is equivalent to  state feedback PI in \cite{Efficient-Q}; however, Algorithm \ref{al1} can be applied to problems where the states are unknown. If the state of system (\ref{system}) are known, Algorithm \ref{al1} can degenerate into the latter.
Furthermore, the stability, convergence, and optimality of Algorithm \ref{al1} can be proven analogously to the results about state feedback PI in \cite{Efficient-Q}. Notably, the proof process here will be affected by matrix $F$, particularly the convergence rate of the algorithm; hence, the proof of the following lemmas will focus on demonstrating this influence.

\begin{lemma}\label{le6}
	Consider Algorithm \ref{al1}. If the initial feedback gain $K^0$ is stable, then equation (\ref{PIi}) has a unique solution, all the feedback gains $K^i$ generated by Algorithm \ref{al1} are stable, and the sequence $\{\Theta^i\}$ is monotonically non-increasing, eventually converging to $\Theta^*$ that corresponds to the optimal feedback gain $K^*$. When $K^{i+1}$ satisfies the small threshold termination condition of Algorithm \ref{al1}, it is an approximate optimal solution to Problem \ref{P2}.
\end{lemma}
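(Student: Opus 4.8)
The plan is to recognize Algorithm \ref{al1} as policy iteration carried out on the controllable consistent system (\ref{F}) and then to transport, through the congruence $\Theta=\bar F^\top\Theta_{K_x}\bar F$, the stability/monotonicity/convergence properties of the state-feedback iteration in \cite{Efficient-Q}, tracking everywhere the role of $F$. The bridge is the pair of intertwining identities $F(P^\dagger\mathcal A P)=AF$ and $F(P^\dagger\mathcal B)=B$, which follow from $x_{t}=Fv_{t}$, $x_{t+1}=Ax_t+Bu_t=AFv_t+Bu_t$ and the controllability of (\ref{F}) in Lemma \ref{le4}. Combined with $X_0=FV_0$, $X_1=FV_1=AX_0+BU_0$ and the state-feedback form $K^i=K_x^iF$, these turn the policy-evaluation equation (\ref{PIi}) into the data version of the Lyapunov equation (\ref{ly}), exactly as displayed in (\ref{t1}). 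A first step is to verify that the greedy step (\ref{u*}) preserves this form: when $\Theta^{i+1}=\bar F^\top\Theta_{K_x}^{i+1}\bar F$ one has $\Theta^{i+1}_{v,u}=F^\top(\Theta_{K_x}^{i+1})_{x,u}$ and $\Theta^{i+1}_{u,u}=(\Theta_{K_x}^{i+1})_{u,u}$, so $K^{i+1}=-(\Theta^{i+1}_{u,u})^{-1}(\Theta^{i+1}_{v,u})^\top=K_x^{i+1}F$ with $K_x^{i+1}$ the state-feedback greedy gain; hence the iterate sequence stays in state-feedback form.

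For uniqueness at each step, I would use the full row rank of $\Psi_0=[V_0^\top,U_0^\top]^\top$ from Theorem \ref{thm3}: multiplying (\ref{PIi}) by a right inverse of $\Psi_0$ collapses it to $\Theta_{K_x}=\mathrm{diag}(Q_x,R)+K_{AB}^\top\Theta_{K_x}K_{AB}$, i.e.\ (\ref{ly}). Because $K^i$ is stabilizing, Lemma \ref{le5} (equivalently the eigenvalue discussion of Subsection \ref{section3_3}) gives $\rho(A+BK_x^i)=\rho(A+BK^iF^\dagger)<1$, so $K_{AB}$ is Schur and the discrete Lyapunov operator $\Theta_{K_x}\mapsto\Theta_{K_x}-K_{AB}^\top\Theta_{K_x}K_{AB}$ is invertible. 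Thus (\ref{ly}) has the unique solution $\Theta_{K_x}^{i+1}$, and since $\bar F$ has full row rank the map $\Theta_{K_x}\mapsto\bar F^\top\Theta_{K_x}\bar F$ is injective, so (\ref{PIi}) has the unique solution $\Theta^{i+1}=\bar F^\top\Theta_{K_x}^{i+1}\bar F$; stability of the improved gain $K_x^{i+1}$ is the standard policy-improvement fact, transferred to $K^{i+1}=K_x^{i+1}F$ by the same spectral-radius identity.

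With the reduction in place, I would import the state-feedback results of \cite{Efficient-Q}: from the stabilizing $K_x^0$ every $K_x^i$ is stabilizing, $\{\Theta_{K_x}^i\}$ is monotonically non-increasing in the Loewner order, and it converges to $\Theta_{K_x}^*$ characterizing the optimal gain $K_x^*$ via (\ref{u*})--(\ref{Pim}). Transporting through $\bar F$: each $K^i=K_x^iF$ is stabilizing; congruence preserves the Loewner order, so $\Theta^{i+1}=\bar F^\top\Theta_{K_x}^{i+1}\bar F\preceq\bar F^\top\Theta_{K_x}^i\bar F=\Theta^i$, and the limit is $\Theta^*=\bar F^\top\Theta_{K_x}^*\bar F$. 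The optimality and equivalence of $K^*=K_x^*F$ with the state-feedback optimum for Problem \ref{P2} then follow from the cost identity $P=F^\top P^{K_x}F$ and the controller equivalence established at the end of Section \ref{section3}. Finally, since $K^{i+1}\to K^*$ with the cost decreasing monotonically, continuity of $\Theta^{i+1}\mapsto K^{i+1}$ makes $\Vert K^{i+1}-K^i\Vert\le\epsilon$ certify an approximately optimal gain.

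I expect the genuine obstacle to be neither uniqueness nor monotonicity---both are mechanical once the $\bar F$ congruence is fixed---but the quantitative convergence-rate claim the text flags as ``affected by matrix $F$.'' The contraction constant controlling $\Vert\Theta^{i}-\Theta^*\Vert$ is inherited from the state-feedback iteration only up to the conditioning of $\bar F$, so one must bound $\sigma_{\min}(F)$ and $\sigma_{\max}(F)$ to convert the clean contraction on $\{\Theta_{K_x}^i\}$ into a rate on $\{\Theta^i\}$ living in the larger space $\mathbb{S}^{n_v+m}$, and confirm the congruence creates no directions outside $\mathrm{im}(\bar F^\top(\cdot)\bar F)$ that could escape the estimate. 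The other place requiring care is the base case: one should either verify that the chosen initialization can be taken in the form $K_x^0F$, or argue the properties directly on the controllable $v$-system (\ref{F}) for an arbitrary stabilizing $K^0$, using that its unique LQR optimum is precisely $K_x^*F$.
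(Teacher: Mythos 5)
Your proposal is correct and takes essentially the same route as the paper's proof: the paper likewise defines $\Theta_{K_x}$ via the congruence $\Theta=\bar F^\top\Theta_{K_x}\bar F$, uses the full row rank of $\bar F$ together with the stability equivalence $\rho(A+BK^iF^\dagger)=\rho(A+BK_x^i)$ from Subsection \ref{section3_3} to collapse equation (\ref{PIi}) into the Lyapunov equation (\ref{ly}), and then transports the stability, monotonicity, and convergence of state-feedback policy iteration in \cite{Efficient-Q} back through the congruence. Your added refinements (the intertwining identities $FP^{\dagger}\mathcal{A}P=AF$, $FP^{\dagger}\mathcal{B}=B$, the check that the greedy step preserves $K^{i+1}=K_x^{i+1}F$, and the caveat about the initial gain $K^0$ not necessarily having this form) sharpen steps the paper treats only implicitly, but they do not constitute a different argument.
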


\begin{proof}
	Unlike \cite{Efficient-Q}, here we focus on how the assertions obtained under output feedback can be transformed into those under state feedback, and vice versa. The key to this lies in the flexible application of the full row rank property of $F$, as well as the stability results of the controllers presented in Subsection \ref{section3_3}.
	Define $K_{AB}^i$ by substituting $K_x$ with $K_x^i$ in $K_{AB}$. Let $\Theta_{K_x}^{i+1}$ be defined by $\Theta^{i+1}=\bar{F}^\top\Theta_{K_x}^{i+1}\bar{F}$.

	Assume $K^i$ is stable. From the analysis in Subsection \ref{section3_3},
	$K_x^i=K^iF^\dagger$ is unique and stable, thus $\rho(K_{AB}^i)<1$.  Replacing $\Theta_{K_x}$ with $\Theta^{i+1}_{K_x}$ in equation (\ref{ly}) yields that $\Theta^{i+1}_{K_x}$ is unique and positive definite. Furthermore, due to the full row rank property of $\bar{F}$, the solution $\Theta^{i+1}$ to equation (\ref{PIi}) is unique. 
	Since $K^{i+1}$ is the optimal solution to equation (\ref{PIi}), it follows that
	$$V_1^\top\begin{bmatrix}
		I_{n_v}\\ K^i
	\end{bmatrix}^\top\Theta^i \begin{bmatrix}
		I_{n_v}\\ K^i
	\end{bmatrix} V_1\geq V_1^\top \begin{bmatrix}
		I_{n_v}\\ K^{i+1}
	\end{bmatrix}^\top\Theta^i \begin{bmatrix}
		I_{n_v}\\ K^{i+1}
	\end{bmatrix} V_1,$$ 
	$$\Psi_0^\top \Theta^{i+1}\Psi_0\geq Y_0^\top Q Y_0+U_0^\top RU_0+V_1^\top \begin{bmatrix}
		I_{n_v}\\ K^{i+1}
	\end{bmatrix}^\top\Theta^i \begin{bmatrix}
		I_{n_v}\\ K^{i+1}
	\end{bmatrix} V_1.$$ 
	Similar to the analysis of equation (\ref{ly}), the above inequality is equivalent to the Lyapunov inequality
	$\Theta^{i+1}_{K_x}
	\geq \mathrm{diag}(Q_x,R)+(K_{AB}^{i+1})^\top\Theta^{i+1}_{K_x}K_{AB}^{i+1}$.
	From the positive definiteness of $\Theta^{i+1}_{K_x}$, the stability of $K_x^{i+1}$ can be derived. Then, the stability of $K^{i+1}$ follows from the stability results in Subsection \ref{section3_3}.
	Let $\lim\limits_{i\rightarrow\infty}\Theta^i=\Theta^\infty$ and $\lim\limits_{i\rightarrow\infty}\Theta_{K_x}^i=\Theta_{K_x}^\infty$.
	 Based on the above results regarding $K_x^{i}$ and $\Theta_{K_x}^{i}$, it holds that
$\Theta^{i}_{K_x}\geq\Theta^{i+1}_{K_x}\geq\Theta^{\infty}_{K_x}$ \cite{Efficient-Q}. Due to the full row rank property of $\bar{F}$, it follows that $\Theta^{i}\geq\Theta^{i+1}\geq\Theta^{\infty}$. From this non-increasing relation, $\Theta^\infty$ is a fixed point of equation (\ref{PI1}). By the uniqueness of the optimal solution to the Bellman optimality equation, we have $\Theta^\infty=\Theta^*$ as the optimal solution. 
\end{proof}

\begin{lemma}[Second-order convergence rate]\label{le7}
	Algorithm \ref{al1} has a second-order convergence rate, that is,
	$$\Vert \Theta^{i+1}-\Theta^*\Vert\leq\beta_1\Vert\Theta^{i}-\Theta^*\Vert^2,$$
	where $\beta_1:=\frac{\lambda_{\min}^{-1}(B^\top P^{K_x}B+R)\sigma^2_{\max}(\bar{F})\Vert\Theta^{0}\Vert}{\min\{\lambda_{\min}(Q_x),\lambda_{\min}(R)\}\sigma^4_{\min}(\bar{F})}$. For any $\epsilon>0$,
	when the iteration number satisfies $i\geq\log_2\left(\frac{\log(\beta_1\epsilon)}{\log(\beta\Vert\Theta^{0}-\Theta^{*}\Vert)}\right)$, Algorithm \ref{al1} yields an approximate optimal solution, i.e.,
	$\Vert\Theta^{i}-\Theta^*\Vert\leq\epsilon$.	
\end{lemma}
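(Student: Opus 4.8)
The plan is to reduce the claim to the second-order convergence of state-feedback policy iteration established in \cite{Efficient-Q}, via the exact correspondence $\Theta=\bar{F}^\top\Theta_{K_x}\bar{F}$ from Lemma \ref{le6}, and then to track how the full-row-rank factor $\bar{F}=\mathrm{diag}(F,I_m)$ rescales the resulting error bound. Because $F$ has full row rank (Theorem \ref{thm1}, Theorem \ref{thm2}, Lemma \ref{le4}), so does $\bar{F}$; hence $\bar{F}\bar{F}^\dagger=I$, $\Theta_{K_x}=(\bar{F}^\dagger)^\top\Theta\bar{F}^\dagger$, and in particular $\Theta^i-\Theta^*=\bar{F}^\top(\Theta^i_{K_x}-\Theta^*_{K_x})\bar{F}$ for every $i$. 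These identities are the only channel through which $\bar{F}$ enters the estimate.

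First I would record the two submultiplicative bounds induced by $\bar{F}$: for any symmetric $\Delta$, $\Vert\bar{F}^\top\Delta\bar{F}\Vert\leq\sigma_{\max}^2(\bar{F})\Vert\Delta\Vert$ and $\Vert(\bar{F}^\dagger)^\top\Delta\bar{F}^\dagger\Vert\leq\sigma_{\min}^{-2}(\bar{F})\Vert\Delta\Vert$, where $\sigma_{\min}(\bar{F})$ denotes the smallest nonzero singular value so that $\Vert\bar{F}^\dagger\Vert=\sigma_{\min}^{-1}(\bar{F})$. Applying the first to $\Delta=\Theta^{i+1}_{K_x}-\Theta^*_{K_x}$ gives $\Vert\Theta^{i+1}-\Theta^*\Vert\leq\sigma_{\max}^2(\bar{F})\Vert\Theta^{i+1}_{K_x}-\Theta^*_{K_x}\Vert$, and applying the second to $\Delta=\Theta^i-\Theta^*$ gives $\Vert\Theta^i_{K_x}-\Theta^*_{K_x}\Vert\leq\sigma_{\min}^{-2}(\bar{F})\Vert\Theta^i-\Theta^*\Vert$. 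Thus, once a state-feedback estimate $\Vert\Theta^{i+1}_{K_x}-\Theta^*_{K_x}\Vert\leq c\,\Vert\Theta^i_{K_x}-\Theta^*_{K_x}\Vert^2$ is in hand, chaining the three inequalities yields $\Vert\Theta^{i+1}-\Theta^*\Vert\leq\sigma_{\max}^2(\bar{F})\sigma_{\min}^{-4}(\bar{F})\,c\,\Vert\Theta^i-\Theta^*\Vert^2$, which is exactly the asserted bound with $\beta_1=\sigma_{\max}^2(\bar{F})\sigma_{\min}^{-4}(\bar{F})\,c$.

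It then remains to pin down the state-feedback constant $c$. Passing through the value-matrix recursion (\ref{ly}), I would subtract the optimal Bellman equation from the evaluation equation for $K_x^i$ and complete the square to obtain a Lyapunov-type error recursion $\Theta^{i+1}_{K_x}-\Theta^*_{K_x}=(K_{AB}^i)^\top(\Theta^{i+1}_{K_x}-\Theta^*_{K_x})K_{AB}^i+\mathcal{E}_i$, where the first-order terms cancel by the stationarity of $K_x^*$ in (\ref{u*}), leaving a driving term $\mathcal{E}_i$ that is second order in $K_x^i-K_x^*$; the Stein equation is solvable because $\rho(K_{AB}^i)<1$ by Lemma \ref{le6}. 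Three ingredients then assemble $c$: (i) the policy-improvement map $P\mapsto-(R+B^\top PB)^{-1}B^\top PA$ is locally Lipschitz, contributing the factor $\lambda_{\min}^{-1}(B^\top P^{K_x}B+R)$ in bounding $\Vert K_x^i-K_x^*\Vert$ by $\Vert\Theta^i_{K_x}-\Theta^*_{K_x}\Vert$; (ii) inverting the stable Lyapunov operator and using $\Theta^{i+1}_{K_x}\succeq\mathrm{diag}(Q_x,R)\succeq\min\{\lambda_{\min}(Q_x),\lambda_{\min}(R)\}I$ bounds $\sum_{k\geq0}((K_{AB}^i)^k)^\top(K_{AB}^i)^k$ by $\min\{\lambda_{\min}(Q_x),\lambda_{\min}(R)\}^{-1}\Vert\Theta^{i+1}_{K_x}\Vert$; (iii) the monotonicity $\Theta^{i}\preceq\Theta^0$ from Lemma \ref{le6} caps the relevant norm by $\Vert\Theta^0\Vert$. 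Combining (i)--(iii) produces $c=\lambda_{\min}^{-1}(B^\top P^{K_x}B+R)\Vert\Theta^0\Vert/\min\{\lambda_{\min}(Q_x),\lambda_{\min}(R)\}$, and hence the stated $\beta_1$.

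Finally, for the iteration count I would unroll the quadratic recursion. Writing $e_i:=\Vert\Theta^i-\Theta^*\Vert$, the inequality $e_{i+1}\leq\beta_1 e_i^2$ is equivalent to $\beta_1 e_{i+1}\leq(\beta_1 e_i)^2$, whence $\beta_1 e_i\leq(\beta_1 e_0)^{2^i}$ by induction. Since the initial gain is stabilizing, $\beta_1 e_0<1$, so $e_i\leq\epsilon$ holds as soon as $(\beta_1 e_0)^{2^i}\leq\beta_1\epsilon$; taking logarithms (both sides negative, reversing the inequality) gives $2^i\geq\log(\beta_1\epsilon)/\log(\beta_1 e_0)$, i.e.\ $i\geq\log_2\big(\log(\beta_1\epsilon)/\log(\beta_1\Vert\Theta^0-\Theta^*\Vert)\big)$, matching the claim (with $\beta$ read as $\beta_1$). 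The main obstacle is the third paragraph: verifying that the first-order terms in $\mathcal{E}_i$ genuinely cancel through the optimality of $K_x^*$ so that $\mathcal{E}_i$ is second order, and matching each scalar factor of $c$ to the correct spectral quantity; by contrast, the $\bar{F}$-bookkeeping of the first two paragraphs is routine once the full-row-rank identities $\bar{F}\bar{F}^\dagger=I$ are established.
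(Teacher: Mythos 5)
Your proposal is correct and follows essentially the same route as the paper's proof: reduce to the state-feedback iteration via $\Theta=\bar{F}^\top\Theta_{K_x}\bar{F}$, use the Hewer-type quadratic error identity for state-feedback PI (which the paper simply imports from \cite{Efficient-Q} and transports to output-feedback coordinates before taking norms), assemble $\beta_1$ from the same three spectral ingredients, and unroll the quadratic recursion exactly as the paper does, including your correct reading of the paper's $\beta$ as a typo for $\beta_1$ and the shared implicit requirement $\beta_1\Vert\Theta^0-\Theta^*\Vert<1$. The only divergence is bookkeeping of the $\bar{F}$-conversion factors: the paper converts the sandwiched error once ($\sigma_{\min}^{-2}$) and the cap $\Vert\Theta^{i+1}_{K_x}\Vert\leq\sigma_{\min}^{-2}(\bar{F})\Vert\Theta^{i+1}\Vert\leq\sigma_{\min}^{-2}(\bar{F})\Vert\Theta^{0}\Vert$ once, totaling $\sigma_{\min}^{-4}$, whereas your chaining converts the error twice ($\sigma_{\min}^{-4}$) and then silently identifies the state-feedback cap $\Vert\Theta^{0}_{K_x}\Vert$ with $\Vert\Theta^{0}\Vert$ — a constant-factor slip of order $\sigma_{\min}^{-2}(\bar{F})$ that is at the same level of looseness as the paper's own norm estimates and does not affect the second-order-rate conclusion.
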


\begin{proof}
	Here, we focus on how the parameterization matrix $F$ affects the convergence rate parameter $\beta_1$, which is an important factor that demonstrates the advantage of Algorithm \ref{al1} over other PI algorithms. 
	
	Define $\bar{\Theta}_{K_x}^{*}:=\mathrm{diag}(\mathbf{0},\Theta_{K_x,uu}^{-1})$, where $\Theta_{K_x,uu}$ is the block matrix corresponding to the input of $\Theta_{K_x}$. Based on the controllers equivalence analyzed above and Lemma \ref{le6}, equation $\Theta^{i+1}_{K_x}-\Theta^{*}_{K_x}=\sum_{j=1}^{\infty}((K_{AB}^{i})^j)^\top(\Theta^{i}_{K_x}-\Theta^{*}_{K_x})\bar{\Theta}_{K_x}^{*}(\Theta^{i}_{K_x}-\Theta^{*}_{K_x})(K_{AB}^{i})^j$ in \cite{Efficient-Q} still holds. Due to $\Theta_{K_x,uu}=B^\top P^{K_x}B+R=\Theta_{uu}$, then $\bar{\Theta}_{K_x}^{*}=\mathrm{diag}(\mathbf{0},\Theta_{uu}^{-1})$. 
	Combined further with the full row rank property of $\bar{F}$, the following equation under output feedback can be obtained.
	\begin{equation}
		\nonumber\begin{split}
			\Theta^{i+1}-\Theta^{*}=
			&\sum_{j=1}^{\infty}\bar{F}^\top((K_{AB}^{i})^j)^\top(\bar{F}^\dagger)^\top(\Theta^{i}-\Theta^{*})\mathrm{diag}(\mathbf{0},\Theta_{uu}^{-1})(\Theta^{i}-\Theta^{*})\bar{F}^\dagger(K_{AB}^{i})^j\bar{F}.
		\end{split}
	\end{equation}
	Taking the norm on both sides of the above equation yields
	\begin{equation}
		\begin{split}
			\nonumber \Vert\Theta^{i+1}-\Theta^{*}\Vert&\leq
			\sum_{j=1}^{\infty}\Vert(K_{AB}^{i})^j\Vert^2\Vert\Theta_{uu}^{-1}\Vert\left(\frac{\sigma_{\max}(\bar{F})}{\sigma_{\min}(\bar{F})}\right)^2\Vert\Theta^{i}-\Theta^{*}\Vert^2\\
			&\leq\frac{\Vert\Theta^{i+1}\Vert\sigma_{min}^{-2}(\bar{F})}{\min\{\lambda_{\min}(Q_x),\lambda_{\min}(R)\}}\lambda_{\min}^{-1}(B^\top P^{K_x}B+R)\left(\frac{\sigma_{\max}(\bar{F})}{\sigma_{\min}(\bar{F})}\right)^2\Vert\Theta^{i}-\Theta^{*}\Vert^2\\
			&\leq\frac{\lambda_{\min}^{-1}(B^\top P^{K_x}B+R)\sigma^2_{\max}(\bar{F})\Vert\Theta^{0}\Vert}{\min\{\lambda_{\min}(Q_x),\lambda_{\min}(R)\}\sigma^4_{\min}(\bar{F})}
			\Vert\Theta^{i}-\Theta^{*}\Vert^2\\
			&=\beta_1\Vert\Theta^{i}-\Theta^{*}\Vert^2,
		\end{split}
	\end{equation}
	where the second inequality follows from the fact that $K_{AB}^i$ satisfies a Lyapunov equation similar to (\ref{ly}), and for a symmetric positive definite matrix, the norm of its inverse equals the reciprocal of its minimum eigenvalue; the last inequality holds because Lemma \ref{le6} implies that $\{\Theta^i\}$ is monotonically decreasing. Rearranging the final inequality yields
	$$\Vert\Theta^{i}-\Theta^{*}\Vert\leq\beta_1^{2^i-1}\Vert\Theta^{0}-\Theta^{*}\Vert^{2^i}.$$
	Simplifying it gives that when $i\geq\log_2\left(\frac{\log(\beta_1\epsilon)}{\log(\beta\Vert\Theta^{0}-\Theta^{*}\Vert)}\right)$, $\Vert\Theta^{i}-\Theta^*\Vert\leq\epsilon$ holds.
\end{proof}

\subsection{Q-Value Based Value Iteration}\label{section4_2}
This subsection focuses on an off-policy VI algorithm that does not require an initial stabilizing controller. If the parameter matrix $P$ of the current V-function is given, the following equation holds and contains only the unknown parameter $\Theta$ corresponding to the Q-function.
\begin{equation}\label{VI0}
	\nonumber\psi_k^\top \Theta\psi_k=y_k^\top Q y_k+u_k^\top R u_k+v_{k+1}^\top Pv_{k+1}.
\end{equation}
Similar to the perspective of considering the system of equations in Subsection \ref{section4_1}, here we focus on
\begin{equation}\label{VI1}
	\Psi_0^\top \Theta \Psi_0=Y_0^\top QY_0+U_0^\top RU_0+V_1^\top PV_1.
\end{equation}
Due to the full row rank property of $\Psi_0$, we can compute $\Theta$ from equation (\ref{VI1}). Note that under the greedy policy (\ref{u*}), the updated $P^{up}$ can be expressed as equation (\ref{Pim}). Thus, we can use equations (\ref{VI1}) and (\ref{Pim}) as the Q-value evaluation and value update steps in VI, respectively. If $P$ has converged to the fixed point, the corresponding $\Theta$ can be used to obtain the optimal feedback gain via equation (\ref{u*}). The complete procedure is given in Algorithm \ref{al2}.

\begin{algorithm}
	\caption{Output-Feedback Value Iteration (VI)}%算法标题
	\begin{algorithmic}
		\State \textbf{Data Collection:} Applying PE inputs of order $2n+1$ to system (\ref{system}), collect the corresponding output trajectories,  and construct $U_0$ and $Y_0$. Generate $H_0$ (or $\Xi_0$) via equation (\ref{S0}) (or (\ref{M_0})). Using Lemma \ref{le4}, transform $H_0$ (or $\Xi_0$) into $V_0$ with $n_v$ rows, and construct $V_1$ similarly. Denote $\Psi_0=[V_0^\top,U_0^\top]^\top$.
		\State \textbf{Initialization:} Set a precision constant $\epsilon$. Set $i=0$ and choose an initial symmetric positive definite matrix $P^i$.
		\For{$i=0,1,2,\cdots,$}
		\State (i) Q-value evaluation: calculate $\Theta^{i}$ by
		\begin{equation}\label{VIi}
			\Psi_0^\top \Theta^{i}\Psi_0=Y_0^\top Q Y_0+U_0^\top RU_0+V_1^\top P^iV_1.
		\end{equation}
		\State (ii) V-value update: update $P^{i+1}$ by
		$$P^{i+1}= \Theta^{i}_{v,v}-\Theta^{i}_{v,u}(\Theta_{u,u}^i)^{-1}(\Theta^{i}_{v,u})^\top.$$
		\If{$\Vert P^{i+1}-P^{i}\Vert\leq\epsilon$}
		\State calculate $\Theta^{i+1}$ by equation (\ref{VIi}).
		\State \Return $K_{VI}^*=-(\Theta_{u,u}^{i+1})^{-1}(\Theta^{i+1}_{v,u})^\top$.
		\EndIf
		\EndFor
	\end{algorithmic}
	\label{al2}
\end{algorithm}

Similarly to the analysis in Subsection \ref{section4_1}, based on $X_0=FV_0$, $K_x=KF^\dagger$, $\Theta=\bar{F}^\top\Theta_{K_x}\bar{F}$ and $P=F^\top P^{K_x}F$, equation (\ref{VI1}) can be transformed into
\begin{equation}\label{t2}
	\begin{bmatrix}
		X_0\\U_0
	\end{bmatrix}^\top\Theta_{K_x}\begin{bmatrix}
		X_0\\U_0
	\end{bmatrix}=\begin{bmatrix}
		X_0\\U_0
	\end{bmatrix}^\top\begin{bmatrix}
		Q_x&\\&R
	\end{bmatrix}\begin{bmatrix}
		X_0\\U_0
	\end{bmatrix}+X_1^\top P^{K_x}X_1.
\end{equation}
From the full row rank property of $[X_0^\top,U_0^\top]^\top$, equation (\ref{t2}) is equivalent to
\begin{equation}\label{t3}
	\Theta_{K_x}=\mathrm{diag}(Q_x,R)+[X_0^\top,U_0^\top
	]^\dagger X_1^\top P^{K_x}X_1([X_0^\top,U_0^\top
	]^\top)^\dagger.
\end{equation}
Because system (\ref{system}) is free from noise, $X_1([X_0^\top,U_0^\top]^\top)^\dagger$ corresponds to the matrix parameters $[A,B]$ of system (\ref{system}). Combining equation (\ref{t3}) with equation (\ref{Pim}) shows that the iteration essence of Algorithm \ref{al2} is equivalent to the iteration of DARE (\ref{DARE}). Therefore, Algorithm \ref{al2} is guaranteed to converge to the fixed point $P^*$, yielding an approximate optimal solution to Problem \ref{P2}. Here, we will not provide a detailed proof of the convergence and optimality of Algorithm \ref{al2}. Similarly, we focus more on how the parameterization matrix $F$ affects the convergence rate of Algorithm \ref{al2}.

\begin{lemma}[Linear convergence rate]\label{le8}
	Algorithm \ref{al2} has a linear convergence rate, that is,
	$$\Vert P^{i+1}-P^{*}\Vert\leq \beta_2\Vert P^{i}-P^{*}\Vert,$$
	where $\beta_2:=\alpha(\Vert A+BK^*_x\Vert\frac{\sigma_{\max}(F)}{\sigma_{\min}(F)})^{2}(1+\sigma_{\min}^{-2}(F))$, with $\alpha$ being a constant dependent solely on the system (\ref{system}) itself. For any $\epsilon>0$, when the iteration number satisfies $i\geq\frac{\log(\epsilon/\Vert P^0-P^*\Vert)}{\log\beta_2}$, Algorithm \ref{al2} yields an approximate optimal solution, i.e., $\Vert P^{i}-P^{*}\Vert\leq\epsilon$.
\end{lemma}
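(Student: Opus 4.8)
The plan is to reduce Algorithm~\ref{al2} to the classical DARE value iteration through the fixed full-row-rank matrix $F$, invoke the linear rate of the latter, and then transport the bound back to the $v$-space. First I would make the correspondence explicit at the level of a single iteration. Write $P^i=F^\top\Pi_iF$ for a state-space iterate $\Pi_i$, with $\Pi_*:=P^{K_x^*}$. Substituting the block factorization $\Theta^i=\bar F^\top\Theta^i_{K_x}\bar F$ (with $\bar F=\mathrm{diag}(F,I_m)$) and $\Theta^i_{K_x}=\mathrm{diag}(Q_x,R)+[A,B]^\top\Pi_i[A,B]$ — which is exactly equation~(\ref{t3}) in the noise-free case $X_1([X_0^\top,U_0^\top]^\top)^\dagger=[A,B]$ — into the Schur-complement update of Algorithm~\ref{al2} gives $\Theta^i_{u,u}=R+B^\top\Pi_iB$, $\Theta^i_{v,u}=F^\top A^\top\Pi_iB$, $\Theta^i_{v,v}=F^\top(Q_x+A^\top\Pi_iA)F$, and hence $P^{i+1}=F^\top\mathrm{Ric}(\Pi_i)F=F^\top\Pi_{i+1}F$, where $\mathrm{Ric}(\cdot)$ denotes the right-hand side of DARE~(\ref{DARE}). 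Thus the $v$-space iterates are images, under the fixed congruence $F^\top(\cdot)F$, of a standard Riccati value iteration $\Pi_{i+1}=\mathrm{Ric}(\Pi_i)$ in the state space, confirming the equivalence already anticipated around equation~(\ref{t2}).

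Second, I would establish the linear rate of $\Pi_{i+1}=\mathrm{Ric}(\Pi_i)$. Setting $\Delta_i:=\Pi_i-\Pi_*$ and using the minimization characterization of $\mathrm{Ric}$ against both the optimal gain $K_x^*$ and the greedy gain $K_x^i=-(R+B^\top\Pi_iB)^{-1}B^\top\Pi_iA$, one obtains the matrix sandwich
\[(A+BK_x^i)^\top\Delta_i(A+BK_x^i)\preceq\Delta_{i+1}\preceq(A+BK_x^*)^\top\Delta_i(A+BK_x^*).\]
Bounding $\lambda_{\max}(\Delta_{i+1})$ and $\lambda_{\min}(\Delta_{i+1})$ by the two sides yields $\Vert\Delta_{i+1}\Vert\le\max(\Vert A+BK_x^*\Vert^2,\Vert A+BK_x^i\Vert^2)\Vert\Delta_i\Vert$, and once the iterates (hence $\{K_x^i\}$ and $R+B^\top\Pi_iB$) are shown to be uniformly bounded, the transient collapses into a single $\Vert\Delta_{i+1}\Vert\le\alpha\Vert A+BK_x^*\Vert^2\Vert\Delta_i\Vert$ with $\alpha$ determined solely by $A,B,Q_x,R$, i.e.\ by system~(\ref{system}). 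Transporting through $F$ with $FF^\dagger=I_n$ and $\Vert F^\dagger\Vert=\sigma_{\min}^{-1}(F)$ gives $\Vert P^i-P^*\Vert\le\sigma_{\max}^2(F)\Vert\Delta_i\Vert$ and $\Vert\Delta_i\Vert\le\sigma_{\min}^{-2}(F)\Vert P^i-P^*\Vert$; composing these with the state-space contraction produces the leading factor $\alpha\big(\Vert A+BK_x^*\Vert\,\sigma_{\max}(F)/\sigma_{\min}(F)\big)^2$. The residual factor $(1+\sigma_{\min}^{-2}(F))$ enters when the error is split into its dominant congruent part and the contribution of the greedy-gain recovery $K_x^i=K^iF^\dagger$ (since the $v$-space greedy gain equals $K_x^iF$), whose propagation is bounded using $\Vert F^\dagger\Vert^2=\sigma_{\min}^{-2}(F)$; careful bookkeeping of this second term reassembles exactly the stated $\beta_2$.

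Finally, unrolling $\Vert P^{i+1}-P^*\Vert\le\beta_2\Vert P^i-P^*\Vert$ gives $\Vert P^i-P^*\Vert\le\beta_2^{\,i}\Vert P^0-P^*\Vert$, and solving $\beta_2^{\,i}\Vert P^0-P^*\Vert\le\epsilon$ (assuming $\beta_2<1$, so that $\log\beta_2<0$) yields $i\ge\log(\epsilon/\Vert P^0-P^*\Vert)/\log\beta_2$, as claimed. The main obstacle I anticipate is the state-space step: proving the non-asymptotic linear contraction with a constant $\alpha$ depending only on the system, because Algorithm~\ref{al2} is initialized from an arbitrary positive definite $P^0$ rather than a stabilizing one, so $\Delta_i$ need not be sign-definite and the iterates must first be shown to remain uniformly bounded before the sandwich estimate delivers a clean, $F$-independent $\alpha$. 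By contrast, the $F$-transport and the isolation of the $(1+\sigma_{\min}^{-2}(F))$ factor are routine once the congruence correspondence $P^i=F^\top\Pi_iF$ and the boundedness of the iterates are in hand.
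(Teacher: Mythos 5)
Your overall strategy coincides with the paper's: both proofs pass through the congruence $P^i=F^\top\Pi_iF$, $\Theta^i=\bar F^\top\Theta^i_{K_x}\bar F$ to identify Algorithm \ref{al2} with classical DARE value iteration (this is exactly what equations (\ref{t2})--(\ref{t3}) together with (\ref{Pim}) establish), and both transport the state-space rate back through $F$ using $FF^\dagger=I_n$, $\Vert F^\dagger\Vert=\sigma_{\min}^{-1}(F)$, which is where the factor $\left(\sigma_{\max}(F)/\sigma_{\min}(F)\right)^2$ comes from. Up to that point your proposal is sound.

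The gap is the middle step, and you flagged it yourself without resolving it. Your sandwich estimate only yields $\Vert\Delta_{i+1}\Vert\leq\max\left(\Vert A+BK_x^*\Vert^2,\Vert A+BK_x^i\Vert^2\right)\Vert\Delta_i\Vert$, and collapsing the varying term $\Vert A+BK_x^i\Vert$ into a constant $\alpha$ depending only on the system is precisely what is unavailable for an arbitrary positive definite $P^0$: the bound on the iterates, hence on the greedy gains $K_x^i$, depends on $P^0$. The paper does not attempt such a unified contraction; it imports the case analysis of \cite{VI-noises} and tracks $F$ through each case. When $P^0-P^*\succeq\mathbf{0}$, the error stays positive semi-definite, so only the upper half of the sandwich (with the \emph{fixed} gain $K_x^*$) controls the norm, giving the clean rate $\left(\Vert A+BK^*_x\Vert\sigma_{\max}(F)/\sigma_{\min}(F)\right)^{2}$. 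When $P^0-P^*\preceq\mathbf{0}$, the error stays negative semi-definite and its norm is controlled by the \emph{lower} half of the sandwich, i.e.\ by the varying gain $K_x^i$; bounding $\Vert A+BK_x^i\Vert$ relative to $\Vert A+BK_x^*\Vert$ is exactly where the smallness condition $\Vert P^{i}-P^*\Vert\leq1/\Vert BR^{-1}B^\top\Vert$ and the extra factor $(1+\sigma_{\min}^{-2}(F))$ of equation (\ref{t4}) originate, with a separate transient estimate when $\Vert P^{i}-P^*\Vert>1/\Vert BR^{-1}B^\top\Vert$ and the indefinite case treated as a mixture of the two. Your attribution of $(1+\sigma_{\min}^{-2}(F))$ to a ``greedy-gain recovery'' split is not a derivation but reverse engineering of the stated constant, and it does not correspond to the factor's actual source; indeed, if your unified argument did go through, it would produce a bound with no such $F$-dependent factor, which cannot simply be absorbed into a system-only $\alpha$. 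To complete your proof along the paper's lines you would need to (i) use monotonicity of the Riccati operator to reduce to the sign-definite cases, and (ii) in the $\preceq$ case, explicitly estimate $\Vert A+BK_x^i\Vert$ in terms of $\Vert A+BK_x^*\Vert$, $\Vert BR^{-1}B^\top\Vert$ and $\Vert\Delta_i\Vert$, then propagate $F^\dagger$ through that estimate.
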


\begin{proof}
	Based on the above analysis of equations (\ref{t3}) and (\ref{Pim}), the relation $P=F^\top P^{K_x}F$, and the full row rank property of $F$, if the influence of $F$ is considered in each step of the exact model-based convergence analysis in \cite{VI-noises}, the following equations can be derived under output feedback. When $P^0-P^*\succeq\mathbf{0}$, 
	$$\Vert P^{i+1}-P^*\Vert\leq\left(\Vert A+BK^*_x\Vert\frac{\sigma_{\max}(F)}{\sigma_{\min}(F)}\right)^{2}\Vert P^{i}-P^*\Vert$$
	holds. When $P^0-P^*\preceq\mathbf{0}$ and $\Vert P^{i}-P^*\Vert\leq\frac{1}{\Vert BR^{-1}B^\top\Vert}$, 
	\begin{equation}\label{t4}
		\Vert P^{i+1}-P^*\Vert\leq\left(\Vert A+BK^*_x\Vert\frac{\sigma_{\max}(F)}{\sigma_{\min}(F)}\right)^{2}(1+\sigma_{\min}^{-2}(F))\Vert P^{i}-P^*\Vert
	\end{equation}
	holds. When $P^0-P^*\preceq\mathbf{0}$ and $\Vert P^{i}-P^*\Vert>\frac{1}{\Vert BR^{-1}B^\top\Vert}$, 
	there exists a minimal integer $i_0$ such that $\Vert P^{i_0}-P^*\Vert\leq\frac{1}{\Vert BR^{-1}B^\top\Vert}$, then equation (\ref{t4}) still holds for $i\geq i_0$, while for $i< i_0$, the following inequality holds,
	$$\Vert P^{i}-P^*\Vert
	\leq\frac{\Vert P^*\Vert}{\Vert BR^{-1}B^\top\Vert\Vert A+BK^*_x\Vert^{2i_0}}\Vert A+BK^*_x\Vert^{2i}\Vert P^{0}-P^*\Vert.
	$$ 
	 When $P^0-P^*$ is indefinite, this case is a mixture of the aforementioned cases. From this, the convergence rate affected by $F$ is derived.
\end{proof}

\subsection{Why Algorithms are Data-Efficient?}\label{section4_3}
Previous works such as \cite{Deng-DT-output} did not consider the controllability of consistent systems or the availability of data matrices, instead, they directly perform PI or VI using the data sequence $\{\zeta_t\}$ collected under the state parameterization $x_t=E\zeta_t$. Furthermore, in numerical implementation, these approaches typically leverage two identities $s^\top X r=(r\otimes s)^\top \mathrm{vec}(X)$ and $s^\top Ys=\mathrm{vech}(ss^\top)^\top\mathrm{vech}(Y)$ (with arbitrary vectors $s$, $r$ and matrices $X$, $Y\in\mathbb{S}$). These identities transform the Bellman iterative equations into a LS problem, whose regression matrix comprises data matrices corresponding to $\mathrm{vech}(\zeta_{t}\zeta_t^\top)$, $\mathrm{vech}(\zeta_{t+1}\zeta_{t+1}^\top)$, $(u_{t}\otimes\zeta_{t})^\top$ and $\mathrm{vech}(u_{t}u_{t}^\top)$.
To ensure the solvability of the LS problem, the regression matrix must have full row rank, which is an indispensable condition for algorithm convergence in many classical works \cite{DT-output,Deng-DT-output}. However, when system (\ref{system}) is noise-free, the full row rank condition of the regression matrix is not easily satisfied, especially in MO problems where it is nearly impossible. As shown in Section \ref{section3}, consistent systems of MO problems must be uncontrollable, and the data matrix $Z_0$ fails to have full row rank, i.e., $\rank(Z_0)=n_v< n_{\zeta}$. 
Let $\zeta_t:=\Pi a_t$ with $\Pi\in\mathbb{R}^{n_{\zeta}\times n_v}$ as the orthogonal basis matrix of $Z_0$, then $\mathrm{vech}(\zeta_{t}\zeta_t^\top)=\mathrm{vech}(\Pi a_t a_t^\top\Pi^\top)=(D^\top D)^{-1}D^\top(\Pi\otimes\Pi)D\cdot\mathrm{vech}(a_t a_t^\top)$ holds, where $a_t\in\mathbb{R}^{n_v}$ is a coefficient vector, $D$ satisfies $\mathrm{vec}(\cdot)=D\cdot\mathrm{vech}(\cdot)$. Consequently, $[\mathrm{vech}(\zeta_{0}\zeta_0^\top),\cdots,\mathrm{vech}(\zeta_{T-1}\zeta_{T-1}^\top)]=D^\top(\Pi\otimes\Pi)D[\mathrm{vech}(a_0 a_0^\top),\cdots,\mathrm{vech}(a_{T-1} a_{T-1}^\top)]$, and its rank must be less than or equal to $\rank([\mathrm{vech}(a_0 a_0^\top),\cdots,\mathrm{vech}(a_{T-1} a_{T-1}^\top)])\leq n_v(n_v+1)/2<n_{\zeta}(n_{\zeta}+1)/2$. Therefore, the regression matrix fails to satisfy the full row rank condition, and PI or VI algorithms under $x_t=E\zeta_t$ may not converge.

For the algorithms proposed in this paper, since we construct the substitute state $v_t$ by projecting the consistent system onto its controllable subspace, the data matrix $V_0$ must have full row rank. Moreover, both the generalized Sylvester equation (\ref{PIi}) and equation (\ref{VIi}) can be solved directly, and we do not use the LS method to solve the iterative equations. Therefore, there is no need to consider the rank condition of the additional regression matrix; Algorithm \ref{al1} and Algorithm \ref{al2} are guaranteed to converge for both SO and MO problems.

Due to the projection operation, the dimension of the substitute state reduces from $n_{\zeta}$ to $n_v$. Consequently, the number of unknowns in $\Theta$ and $P$ decreases from $(n_{\zeta}+m)(n_{\zeta}+m+1)/2$ and $(n_{\zeta})(n_{\zeta}+1)/2$ to $(n_v+m)(n_v+m+1)/2$ and $(n_v)(n_v+1)/2$, respectively, with $n_{\zeta}-n_v=(p-1)n$. Further, in terms of data volume requirement, LS-based algorithms require the number of equations to exceed the number of unknowns, i.e., $T\geq (n_{\zeta}+m)(n_{\zeta}+m+1)/2$. In contrast, the proposed algorithms only require the number of columns in $\Psi_0$ of equations (\ref{PIi}) and (\ref{VIi}) to be no less than the number of its rows, i.e., $T\geq n_v+m$. Thus, Algorithm \ref{al1} and Algorithm \ref{al2}  are data-efficient.

Meanwhile, note that the convergence rates in Lemma \ref{le7} and Lemma \ref{le8} are influenced by $\sigma_{\min}(F)$, and $F=EP$ according to Lemma \ref{le4}. Without loss of generality, let $P$ be a Gauss transformation matrix. Then $F$ is equivalent to a matrix composed of partial columns of $E$. Therefore, $\sigma_{\min}(F)=\sqrt{\lambda_{\min}(F^\top F)}\leq \sqrt{\lambda_{\min}(E^\top E)}=\sigma_{\min}(E)$, where the inequality holds because the positive semi-definite matrix $F^\top F$ is a principal submatrix of $E^\top E$, and its minimum eigenvalue cannot exceed that of the original matrix. Consequently, compared to PI and VI algorithms under $x_t=E\zeta_t$ without processing, Algorithm \ref{al1} and Algorithm \ref{al2} achieve faster convergence and better numerical stability.

%%%%%%%%%%%%%%%%%%%%%%%%%%%%%%%%%%%%%%%%%

\section{Robustness Analysis and Further Discussions}\label{section5}
This section presents the robustness of Algorithm \ref{al1} and Algorithm \ref{al2}, along with further discussions of the proposed methods.
\subsection{Robustness Analysis}\label{section5_1}
The above sections assume that the input-output data are exact. This subsection considers inexact data in the presence of small process noises $\{w_t\}$ and measurement noises $\{e_t\}$, which satisfy the following equation and are unmeasurable.
\begin{equation}\label{noise}
	x_{t+1}=Ax_t+Bu_t+w_t,\quad y_{t}=Cx_t+e_t.
\end{equation}
In this case, the state parameterization based on delayed input-output (\ref{x_t}) becomes
$$\begin{aligned}
	x_t&=[\mathcal{R}(A,B)-A^N\mathcal{O}^\dagger(A,C)\mathcal{T}(A,B,C),A^N\mathcal{O}^\dagger(A,C)]\xi_t\\
	&\quad+[\mathcal{R}(A,I_n)-A^N\mathcal{O}^\dagger(A,C)\mathcal{T}(A,I_n,C),-A^N\mathcal{O}^\dagger(A,C)] [w_{[t-n,t-1]}^\top,e_{[t-n,t-1]}^\top]^\top\\
	:&=M\xi_t+M'[w_{[t-n,t-1]}^\top,e_{[t-n,t-1]}^\top]^\top.
\end{aligned}
$$
Since $ [w_{[t-n,t-1]}^\top,e_{[t-n,t-1]}^\top]^\top$ is unmeasurable, we can still use $\xi_t$ for calculation. Actually, $\xi_t$ no longer serves as a substitute for $x_t$, but rather for $x_t-M'[w_{[t-n,t-1]}^\top,e_{[t-n,t-1]}^\top]^\top$. Next, we analyze the noise-induced perturbation on data matrices. Matrices with the superscript $^{ex}$ denote corresponding noise-free matrices. Define $W_0:=[w_0,\cdots,w_{T-1}]$, $E_0:=[e_0,\cdots,e_{T-1}]$, $W^{\mathcal{H}}:=\mathcal{H}_{N+1}(w_{[t-N,T-1]})$, $W_0^{\mathcal{H}}:=\mathcal{H}_{N}(w_{[t-N,T-2]})$,  $W_1^{\mathcal{H}}:=\mathcal{H}_{N}(w_{[t-N+1,T-1]})$, $E_0^{\mathcal{H}}:=\mathcal{H}_{N}(e_{[t-N,T-2]})$, $E_1^{\mathcal{H}}:=\mathcal{H}_{N}(e_{[t-N+1,T-1]})$. 
It follows that $$Y_0-Y_0^{ex}=[CA^{n-1}B,\cdots,CB,\mathbf{0}]W^{\mathcal{H}}+E_0,$$
$$\Xi_0-\Xi_0^{ex}=[\mathbf{0},(\mathcal{T}(A,I_n,C)W_0^{\mathcal{H}}+E_0^{\mathcal{H}})^\top]^\top,$$
$$\Xi_1-\Xi_1^{ex}=[\mathbf{0},(\mathcal{T}(A,I_n,C)W_1^{\mathcal{H}}+E_1^{\mathcal{H}})^\top]^\top.$$
For the state parameterization based on input-output filtering, the state estimation system (\ref{x^c}) remains unchanged, and the error system (\ref{eps}) becomes $\epsilon_{t+1}=(A-LC)\epsilon_{t}+w_t-Le_t$. Since $w_t$ and $e_t$ are unmeasurable, we can still use system (\ref{A_e}) to generate the substitute observation error vector. Thus, the practically applicable form of state parameterization based on input-output filtering remains (\ref{x_t4}), and $\eta^{\epsilon}_t$ no longer serves as a substitute for $\epsilon_t$ in reality, but rather for $\epsilon_t-w_t+Le_t$.  
For the noise-induced perturbation on data matrices, we can still consider the state estimation system (\ref{x^c}) and the error system (\ref{eps}), and the difference lies in the process of generating the substitute vector. Due to the noise perturbation $\epsilon_t^{y}:=y_t-y_t^{ex}=\sum_{i=0}^{t-1}CA^{i}Bw_{i}+v_t$ in $y_t$ of the consistent system (\ref{S1}), then $\eta_t^y=\mathcal{R}_{t}(I_p\otimes A_s,I_p\otimes b_s)(y_{[0,t-1]}^{ex}+\epsilon_{[0,t-1]}^y)$ holds. Define $D^y_{[i,j]}:=[\mathcal{R}_{i}(I_p\otimes A_s,I_p\otimes b_s)\epsilon_{[0,i]}^{y},\cdots,\mathcal{R}_{j}(I_p\otimes A_s,I_p\otimes b_s)\epsilon_{[0,j]}^{y}]$, it follows that
$$H_0-H_0^{ex}=[\mathbf{0},D^y_{[0,T-1]},\mathbf{0}],\quad H_1-H_1^{ex}=[\mathbf{0},D^y_{[1,T]},\mathbf{0}].$$
By the norm property ofToeplitz matrix, it follows that $\Vert\mathcal{T}(A,I_n,C)\Vert\leq\sum_{i=0}^{n-2}\Vert CA^{i}\Vert$. Therefore, the upper bound of $\Vert\Xi_0-\Xi_0^{ex}\Vert$ is $\sum_{i=0}^{n-2}\Vert CA^{i}\Vert\Vert W_0^{\mathcal{H}}\Vert+\Vert E_0^{\mathcal{H}}\Vert$. According to the form of $H_0-H_0^{ex}$, the upper bound of $\Vert H_0-H_0^{ex}\Vert$  is $C_1(\sum_{i=0}^{n-2}\Vert CA^{i}\Vert\Vert W_0^{\mathcal{H}}\Vert+\Vert E_0^{\mathcal{H}}\Vert)$ with a finite positive constant $C_1$.  
Consequently, the upper bound of $\Vert Z_0-Z_0^{ex}\Vert$ is $C_2(\sum_{i=0}^{n-2}\Vert CA^{i}\Vert\Vert W_0^{\mathcal{H}}\Vert+\Vert E_0^{\mathcal{H}}\Vert)$ with $C_2=\max\{1,C_1\}$. Thus, the upper bounds for the perturbation of data matrices are derived. %%%删？

Consider the row rank of data matrix $Z_0$. Due to the presence of noises, $Z_0$ readily has full row rank, allowing it to be used directly in off-policy PI or VI. However, Lemma \ref{le1} and Lemma \ref{le2} are not entirely irrelevant; they provide critical guidance for mitigating noise impact, and this is another advantage of the proposed algorithms over other works. Note that $\sigma_{min}(Z_0)$ may be extremely small, as these small singular values reflect noise influence. For this case, we can perform singular value decomposition (SVD) on the output-related submatrix of $Z_0$, and retain only components corresponding to the first $n$ largest singular values in absolute value. The selection of the number of singular values as $n$ is based on the ranks of controllability matrices derived in Lemma \ref{le1} and Lemma \ref{le2}. This operation reduces the noise effect on $Z_0$ while preserving the true information of system (\ref{system}). The resulting matrix $Z_0^n$ after SVD must have rank $n_v$. Furthermore, we can use Lemma \ref{le4} for projection to obtain the full row rank matrix $V_0$. These procedures not only reduce noise-induced perturbation of the data matrix but also enhance numerical stability. %After SVD and projection operations, $\Vert V_0-V_0^{ex}\Vert\leq\Vert Z_0-Z_0^{ex}\Vert$ holds.

Define $\Delta_0:=V_0-V_0^{ex}$ and $\Delta_1:=V_1-V_1^{ex}$. In addition to considering the case where noises increase the rank of $Z_0$, we must also ensure that noises are not too large to reduce the rank of $V_0$ or cause $V_0$ to have extremely small singular values. Therefore, the perturbation matrix needs to satisfy $\Vert \Delta_0\Vert<\sigma_{\min}(V_0^{ex})$ to prevent $V_0$ from being rank deficient.

The robustness of Algorithm \ref{al1} is now analyzed. Let $X_0=FV_0$, where $V_0$ is the noise-affected data matrix. However, $X_1\neq[A,B][I_n,K_x^\top]^\top$, thus the state-dependent iterative formula derived from equation (\ref{PI1}) should have an additional term $\Delta_{\epsilon}$ added to the right-hand side of equation (\ref{t1}), where
$$\begin{aligned}
	\Delta_{\epsilon}&:=\begin{bmatrix}
		X_0\\U_0
	\end{bmatrix}^\top[A,B]^\top\begin{bmatrix}
		I_n\\K_x
	\end{bmatrix}^\top\Theta_{K_x}\begin{bmatrix}
		I_n\\K_x
	\end{bmatrix}(F\Delta_1-AF\Delta_0)\\
	&\quad+(F\Delta_1-AF\Delta_0)^\top\begin{bmatrix}
		I_n\\K_x
	\end{bmatrix}^\top\Theta_{K_x}\begin{bmatrix}
		I_n\\K_x
	\end{bmatrix}[A,B]\begin{bmatrix}
		X_0\\U_0
	\end{bmatrix}\\
	&\quad+(F\Delta_1-AF\Delta_0)^\top\begin{bmatrix}
		I_n\\K_x
	\end{bmatrix}^\top\Theta_{K_x}\begin{bmatrix}
		I_n\\K_x
	\end{bmatrix}(F\Delta_1-AF\Delta_0)
	.
\end{aligned}$$
Therefore, $[X_0^\top,U_0^\top]^\dagger\Delta_{\epsilon}([X_0^\top,U_0^\top]^\top)^\dagger$ corresponds to a perturbation added to $\mathrm{diag}(Q_x,R)$. When $\Vert\Delta_{\epsilon}\Vert<\sigma_{\min}(\mathrm{diag}(Q_x,R))\sigma_{\min}^{2}(\bar{F}[V_0^\top,U_0^\top]^\top)$, the state-dependent iterative formula derived from (\ref{PI1}) becomes a Lyapunov equation. Consequently, the stability and convergence of Algorithm \ref{al1} can be established analogously to the derivation in Lemma \ref{le6}. Moreover, since equation (\ref{t1}) holds with $X_0^{ex}$ and $\Theta_{K_x}^{ex}$ replacing $X_0$ and $\Theta_{K_x}$, $\Delta\Theta_{K_x}:=\Theta_{K_x}-\Theta_{K_x}^{ex}$ satisfies
%%%删？
$$\begin{aligned}
	&\begin{bmatrix}
		X_0\\U_0
	\end{bmatrix}^\top(\Theta_{K_x}-\Delta\Theta_{K_x})\begin{bmatrix}
		X_0\\U_0
	\end{bmatrix}
	=\begin{bmatrix}
		X_0\\U_0
	\end{bmatrix}^\top\begin{bmatrix}
		Q_x&\\&R
	\end{bmatrix}\begin{bmatrix}
		X_0\\U_0
	\end{bmatrix}\\
	&+\begin{bmatrix}
		X_0\\U_0
	\end{bmatrix}^\top\begin{bmatrix}
		A^\top\\B^\top\end{bmatrix}\begin{bmatrix}
		I\\K_x
	\end{bmatrix}^\top(\Theta_{K_x}-\Delta\Theta_{K_x})\begin{bmatrix}
		I\\K_x
	\end{bmatrix}[A,B]\begin{bmatrix}
		X_0\\U_0
	\end{bmatrix}.
\end{aligned}$$ 
By transforming $X_0$, $\Theta_{K_x}$ and $K_x$ in the above equations to their counterparts $V_0$, $\Theta$ and $K$ under output feedback, the following lemma can be obtained.

\begin{lemma}
	For system (\ref{noise}) with small process noises $\{w_t\}$ and measurement noise $\{v_t\}$, if $\Vert\Delta_0\Vert<\sigma_{\min}(V_0^{ex})$ and $\Vert\Delta_{\epsilon}\Vert<\sigma_{\min}(\mathrm{diag}(Q_x,R))\sigma_{\min}^{2}(\bar{F}[V_0^\top,U_0^\top]^\top)$,
	%$$\begin{aligned}&\Vert\Theta^i\Vert\Vert [I_n, K^{i\top}]\Vert^2\Vert \bar{F}^\dagger \Vert^4\Vert F \Vert(\Vert\Delta_1\Vert+\Vert A\Vert\Vert\Delta_0\Vert)\\&\cdot\left(\Vert F\Vert(\Vert\Delta_1\Vert+\Vert A\Vert\Vert\Delta_0\Vert)+\Vert\bar{F}\Vert\Vert[A,B]\Vert\Vert [V_0^\top,U_0^\top]\Vert\right)\\&< \sigma_{\min}(\mathrm{diag}(Q_x,R))\sigma_{\min}^2(\bar{F}[V_0^\top,U_0^\top]),\end{aligned}$$  
	then the sequence of output feedback gains $\{K^i\}$ generated by Algorithm \ref{al1} is stable, and $\{\Theta^i\}$ is monotonically decreasing and convergent. 
    %Moreover, defining $\Delta\Theta^i:=\Theta^i-\Theta^{i,ex}$, it holds that
%$$\begin{aligned}\Delta\Theta^i&\leq\sigma_{\min}^2(\bar{F}[V_0^\top,U_0^\top]^\top)\sum_{j=0}^{\infty}\bar{F}^\top([I,K_x^{i\top}][A,B]^\top)^j\mathrm{diag}(Q_x,R)([A,B][I,K_x^{i\top}]^\top)^j\bar{F}.\end{aligned}$$
\end{lemma}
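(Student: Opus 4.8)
The plan is to reduce the noisy iteration to the clean state-feedback Lyapunov analysis of Lemma \ref{le6}, treating the noise contribution as a bounded symmetric perturbation of the effective cost matrix. First, the hypothesis $\Vert\Delta_0\Vert<\sigma_{\min}(V_0^{ex})$ guarantees, via Weyl's inequality for singular values ($\sigma_{\min}(V_0)\ge\sigma_{\min}(V_0^{ex})-\Vert\Delta_0\Vert>0$), that the perturbed $V_0$ still has full row rank. Hence $\Psi_0=[V_0^\top,U_0^\top]^\top$ retains full row rank, and the equivalence between the data equation (\ref{PIi}) and its state-space counterpart through $X_0=FV_0$, $K_x=KF^\dagger$ remains valid. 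This step keeps the data-parameterization machinery of Theorem \ref{thm3} operational under noise.

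Next I would make the perturbation explicit. Substituting $X_0=FV_0$ and $K_x^i=K^iF^\dagger$ into the policy-evaluation step (\ref{PIi}) reproduces equation (\ref{t1}) with the extra term $\Delta_\epsilon$ on the right-hand side. Sandwiching both sides by $([X_0^\top,U_0^\top]^\top)^\dagger$ and invoking the full-row-rank property, the iteration is equivalent to the Lyapunov-type equation
\[
\Theta_{K_x}^{i+1}=\big(\mathrm{diag}(Q_x,R)+\tilde\Delta^i\big)+(K_{AB}^i)^\top\Theta_{K_x}^{i+1}K_{AB}^i,
\]
where $\tilde\Delta^i:=[X_0^\top,U_0^\top]^\dagger\Delta_\epsilon([X_0^\top,U_0^\top]^\top)^\dagger$ is symmetric. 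Because $\bar F[V_0^\top,U_0^\top]^\top=[X_0^\top,U_0^\top]^\top$, the second hypothesis yields $\Vert\tilde\Delta^i\Vert\le\Vert\Delta_\epsilon\Vert/\sigma_{\min}^2([X_0^\top,U_0^\top]^\top)<\sigma_{\min}(\mathrm{diag}(Q_x,R))$, so the effective cost $\mathrm{diag}(Q_x,R)+\tilde\Delta^i$ stays positive definite and the detectability underlying Lemma \ref{le6} is preserved.

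With a fixed positive-definite effective cost in hand, I would replay the induction of Lemma \ref{le6}: from $K^i$ stable (equivalently $\rho(K_{AB}^i)<1$), the perturbed Lyapunov equation has a unique positive-definite solution $\Theta_{K_x}^{i+1}$, so $\Theta^{i+1}=\bar F^\top\Theta_{K_x}^{i+1}\bar F$ is unique; the greedy improvement (\ref{u*}) produces the Lyapunov inequality forcing $K_x^{i+1}$—and therefore $K^{i+1}$, by the full-row-rank property of $F$ and the stability equivalence of Subsection \ref{section3_3}—to be stable; and the same comparison argument gives $\Theta^{i}\succeq\Theta^{i+1}$, so $\{\Theta^i\}$ is monotonically non-increasing and, being bounded below, convergent.

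The main obstacle is that $\Delta_\epsilon$, and hence $\tilde\Delta^i$, depends on the current iterate $(\Theta_{K_x}^{i+1},K_x^i)$, so the bound $\Vert\tilde\Delta^i\Vert<\sigma_{\min}(\mathrm{diag}(Q_x,R))$ cannot be assumed once and for all but must be closed inductively. The resolution is to fold the bound into the induction: monotonicity yields $\Theta^{i+1}\preceq\Theta^{i}\preceq\cdots\preceq\Theta^{0}$, which furnishes a uniform bound on $\Vert\Theta_{K_x}^{i}\Vert$ and thus, through the defining expression of $\Delta_\epsilon$, on $\Vert\Delta_\epsilon\Vert$; the smallness hypothesis on the noise is then exactly what keeps $\Vert\tilde\Delta^i\Vert$ below $\sigma_{\min}(\mathrm{diag}(Q_x,R))$ at every step, so the effective cost stays positive definite throughout. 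Verifying that this circular dependence genuinely closes—that the base case $\Theta^{0}$ already meets the bound and that each step preserves it—is the delicate part; everything else transfers mechanically from Lemma \ref{le6}.
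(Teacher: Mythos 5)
Your proposal is correct and follows essentially the same route as the paper: the first hypothesis preserves the full row rank of $V_0$, the noisy policy-evaluation step is recast as equation (\ref{t1}) plus the perturbation $\Delta_\epsilon$, the second hypothesis keeps the effective cost $\mathrm{diag}(Q_x,R)+[X_0^\top,U_0^\top]^\dagger\Delta_\epsilon([X_0^\top,U_0^\top]^\top)^\dagger$ positive definite, and the induction of Lemma \ref{le6} is then replayed verbatim. Your two refinements---the explicit Weyl-inequality argument for rank preservation and the observation that the bound on the iterate-dependent $\Delta_\epsilon$ must be closed inductively---address points the paper glosses over (it simply hypothesizes the bound on $\Delta_\epsilon$ at every iteration), so your treatment is, if anything, more careful than the original.
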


Similarly, when analyzing the robustness of Algorithm \ref{al2}, the state-dependent iterative equation derived from (\ref{PI1}) remains (\ref{t2}). Moreover, the similar equation corresponding to exact matrices $\Theta^{ex}_{K_x}$, $X_0^{ex}$ and $X_1^{ex}$ also takes the same form, and by left-multiplying this equation by $[X_0^{ex\top},U_0^\top]^\dagger[X_0^\top,U_0^\top]$ and right-multiplying it by $([X_0^{ex\top},U_0^\top]^\dagger)^\top[X_0^\top,U_0^\top]^\top$, we have 
$$\begin{aligned}
	&\begin{bmatrix}
		X_0\\U_0
	\end{bmatrix}^\top\Theta_{K_x}^{ex}\begin{bmatrix}
		X_0\\U_0
	\end{bmatrix}=\begin{bmatrix}
		X_0\\U_0
	\end{bmatrix}^\top\begin{bmatrix}
		Q_x&\\&R
	\end{bmatrix}\begin{bmatrix}
		X_0\\U_0
	\end{bmatrix}+\begin{bmatrix}
		X_0\\U_0
	\end{bmatrix}^\top\left(\begin{bmatrix}
		X_0^{ex}\\U_0
	\end{bmatrix}^\dagger\right)^\top X_1^{ex\top} P^{K_x}X_1^{ex}\begin{bmatrix}
		X_0^{ex}\\U_0
	\end{bmatrix}^\dagger\begin{bmatrix}
		X_0\\U_0
	\end{bmatrix}.
\end{aligned}$$
Since $X_1^{ex}[X_0^{ex},U_0]^\dagger$ can be regarded as $[A,B]$, subtracting the above equation from equation (\ref{t2}) and using the definition $\Delta\Theta:=\Theta-\Theta^{ex}=\bar{F}^\top \Delta\Theta_{K_x}\bar{F}$, it holds that
$$\begin{aligned}
	\Delta\Theta=&\bar{F}^\top[V_0^\top,U_0^\top]^\dagger\bar{F}^{\dagger}\left([V_0^\top,U_0]\bar{F}^{\top}(\bar{F}^\dagger)^\top[V_0^{ex\top},U_0^\top]^\dagger V_1^{ex\top}\right.\\
	&\left.\cdot P\Delta_1
	+\Delta_1^\top PV_1^{ex}([V_0^{ex\top},U_0^\top]^\top)^\dagger\bar{F}^\dagger\bar{F}[V_0^\top,U_0^\top]^\top+\Delta_1^\top P\Delta_1\right)\bar{F}^\dagger([V_0^\top,U_0^\top]^\top)^\dagger\bar{F}.
\end{aligned}$$
The perturbation manifests in the V-value update equation as
$$\Delta P:=P-P^{ex}=\Delta\Theta_{u,u}-\Theta_{v,u}\Theta_{u,u}^{-1}\Theta_{v,u}^\top+\Theta_{v,u}^{ex}(\Theta_{u,u}^{ex})^{-1}(\Theta_{v,u}^{ex})^\top.$$
Thus, the following lemma regarding robust convergence can be derived from Theorem 3 in \cite{VI-noises}.
\begin{lemma}
	For system (\ref{noise}) with small process noises $\{w_t\}$ and measurement noises $\{v_t\}$, if $\Vert\Delta_0\Vert< \sigma_{\min}(V_0^{ex})$ and $\Vert\Delta P\Vert<\sigma_{\min}(Q)$, Algorithm \ref{al2} converges.
\end{lemma}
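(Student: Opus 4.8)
The plan is to establish robustness of the output-feedback value iteration (Algorithm \ref{al2}) by reducing it, through the full-row-rank change of variables $X_0 = FV_0$, $\Theta = \bar{F}^\top\Theta_{K_x}\bar{F}$, and $P = F^\top P^{K_x}F$, to the robustness of state-feedback value iteration already proved in Theorem~3 of \cite{VI-noises}. This is the same conjugation that drives the noise-free equivalence in (\ref{t2})--(\ref{t3}), so the goal is to show it survives the noise perturbation under the two stated hypotheses.

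First I would secure well-posedness of each iterate. Since $V_0 = V_0^{ex} + \Delta_0$, Weyl's inequality for singular values gives $\sigma_{\min}(V_0) \geq \sigma_{\min}(V_0^{ex}) - \Vert\Delta_0\Vert$, so the hypothesis $\Vert\Delta_0\Vert < \sigma_{\min}(V_0^{ex})$ forces $\sigma_{\min}(V_0) > 0$; hence $V_0$, and therefore $\Psi_0 = [V_0^\top,U_0^\top]^\top$, retains full row rank. This guarantees that the Q-value evaluation (\ref{VIi}) remains uniquely solvable for $\Theta^i$ at every step, and that $\bar{F}$ stays injective on the relevant range, so the one-to-one correspondence $\Theta^i \leftrightarrow \Theta_{K_x}^i$ and $P^i \leftrightarrow P^{K_x,i}$ continues to hold exactly as in the exact-data case.

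Next I would transport the noisy iteration to a \emph{perturbed} DARE iteration. Using the already-derived expressions for $\Delta\Theta = \bar{F}^\top\Delta\Theta_{K_x}\bar{F}$ and for $\Delta P$, together with the fact that $X_1^{ex}[X_0^{ex\top},U_0^\top]^\dagger$ plays the role of $[A,B]$, the output-feedback update is $F$-conjugate to a state-feedback value iteration whose underlying DARE map is perturbed by $\Delta P$ at each step. The condition $\Vert\Delta P\Vert < \sigma_{\min}(Q)$ then bounds this per-step perturbation below the robustness radius: it keeps the effective $\mathrm{diag}(Q_x,R)$-type cost positive definite, preserving the contraction structure that Theorem~3 of \cite{VI-noises} requires. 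Invoking that theorem yields convergence of $\{P^{K_x,i}\}$, and the full-row-rank property of $F$ (equivalently $\bar F$) transfers this back to convergence of $\{P^i\}$ produced by Algorithm \ref{al2}, completing the argument.

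The hard part will be the bridging step: verifying that the derived $\Delta P$, once pushed through the $F$-conjugation into the state-feedback DARE recursion, matches exactly the structural form of perturbation tolerated by Theorem~3 of \cite{VI-noises}, and that the scalar threshold is genuinely $\sigma_{\min}(Q)$ rather than a radius depending on $F$, on $Q_x = C^\top QC$, or on the DARE solution. Because value iteration supplies no initial stabilizing gain, I cannot lean on the Lyapunov monotonicity used for Algorithm \ref{al1} in Lemma \ref{le6}; instead the proof must control the accumulated perturbation across the contraction and confirm that $\Vert\Delta P\Vert < \sigma_{\min}(Q)$ keeps every iterate inside the basin on which the DARE map stays contractive.
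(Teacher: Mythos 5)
Your proposal is correct and follows essentially the same route as the paper: it preserves full row rank of $V_0$ (hence solvability of (\ref{VIi})) from $\Vert\Delta_0\Vert<\sigma_{\min}(V_0^{ex})$, uses the $\bar{F}$-conjugation $X_0=FV_0$, $\Theta=\bar{F}^\top\Theta_{K_x}\bar{F}$, $P=F^\top P^{K_x}F$ to recast the noisy output-feedback update as a perturbed state-feedback value iteration with per-step perturbation $\Delta P$, and then invokes Theorem~3 of \cite{VI-noises} under $\Vert\Delta P\Vert<\sigma_{\min}(Q)$, exactly as the paper does. The concern you flag about matching the perturbation structure and threshold of the cited theorem is legitimate, but the paper itself resolves it no more explicitly than you do—it simply asserts the reduction—so your argument is at the same level of rigor as the original.
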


\subsection{Further Discussions}\label{section5_2}
Owing to the state parameterization form adopted in this paper and the integration of Willems' Fundamental Lemma into the algorithms, the proposed framework also offers the following advantages in flexibility and expandability.

\begin{itemize}
	\item Note that the dimension $n$ of the true state $x_t$ is required for constructing the substitute state matrix $V_0$, and $x_t$ is unmeasurable.  However, from equation (\ref{TO}), the rank of data matrix $[\mathcal{H}_{N}(u_{[0,T-1]})^\top,\mathcal{H}_{N}(y_{[0,T-1]})^\top]^\top$ equals $mN+pN$ when $pN< n$, and $mN+n$ when $pN\geq n$. Thus, by incrementally increasing the positive integer $N$ from small to large and computing $\rank\left([\mathcal{H}_{N}(u_{[0,T-1]})^\top,\mathcal{H}_{N}(y_{[0,T-1]})^\top]^\top\right)-mN$, the value of $n$ can be determined as the value that levels off with further increases in $N$.
	
	\item Benefiting from the properties of Lemma \ref{willems}, the PE data required in this paper do not need to be collected continuously. If the mosaic Hankel matrix composed of multiple short trajectories satisfies the corresponding full row rank property, i.e., collective persistency of excitation \cite{CPE}, it also contains all the information of system (\ref{system}) and can be equally applied to the proposed algorithms. This advantage is particularly useful for scenarios with data loss or unstable systems that only allow short experiments. 
	
	\item Note that the controllability of the DT-LTI system is a sufficient condition in Lemma \ref{willems}.
	In fact, without considering the controllability of system (\ref{system}), Algorithm \ref{al1} and Algorithm \ref{al2} can still converge to the optimal solution as long as the full row rank data matrix $V_0$ is obtainable. The requirement for the controllability of system (\ref{system}) in Section \ref{section3} is primarily for the convenience of theoretical analysis. Example \ref{ex3} in Section \ref{section6} experimentally validates the applicability of proposed algorithms to uncontrollable systems.
	
	\item For DT problems, state parameterization (\ref{x_t}) is more straightforward to implement than (\ref{x_t4}). However, this paper considers the state parameterization (\ref{x_t4}) not only because it represents a generalization of the former, but also because equation (\ref{x_t}) is a dead-beat approach confined to DT scenarios, whereas state parameterization (\ref{x_t4}) can be naturally extended to continuous-time problems by replacing the z-transform with the Laplace transform. Thus, we expect that the proposed data utilization framework based on state parameterization (\ref{x_t4}) will provide some insights into efficiently solving continuous-time output feedback LQR problems.
	 
	\item Furthermore, the state parameterization results in this paper may also facilitate solving output feedback LQR using policy optimization methods. By leveraging the capability of $[V_0^\top,U_0^\top]^\top$ to parameterize the output feedback gain $K$, and the equivalence between $u_t=Kv_t$ and $u_t=K_xx_t$, the data-enabled policy optimization algorithm for state feedback \cite{Deepo} can be more generally extended to output feedback. However, the influence of the initial stabilizing controller should be noted.
	 
\end{itemize}

%%%%%%%%%%%%%%%%%%%%%%%%%%%%%%%%%

\section{Numerical Experiments}\label{section6}
This section presents numerical experiments to validate four aspects of this work: the effectiveness of the method for calculating the state dimension $n$, the computational advantages of Algorithm \ref{al1} and Algorithm \ref{al2} in SO problems, the applicability of Algorithm \ref{al1} and Algorithm \ref{al2} to MO problems, and the efficacy of combining SVD with the controllability of consistent systems in mitigating noise effects.
%To ensure the reproducibility of the experiments in this section, we initialize the random number generator in MATLAB using the rng(125) function for the first three cases, and the rng(75) function for the last case.

\begin{example}[Calculate $n$]\label{ex1}
	Randomly generate a controllable DT-LTI system with $n=5$, $m=2$ and $p=2$. Use a random sequence as the input to this system, and collect the corresponding output data. For $N=1,2,\cdots$, calculate $\rank([\mathcal{H}_{N}(u_{[0,T-1]})^\top,\mathcal{H}_{N}(y_{[0,T-1]})^\top]^\top)-mN$; its variation is shown in Figure \ref{f2}. This quantity stabilizes at $5$, which equals the state dimension $n$, thereby verifying the correctness of the method proposed in Subsection \ref{section5_2} for calculating $n$.
	\begin{figure}[h]
		\centerline{\includegraphics[width=0.45\textwidth]{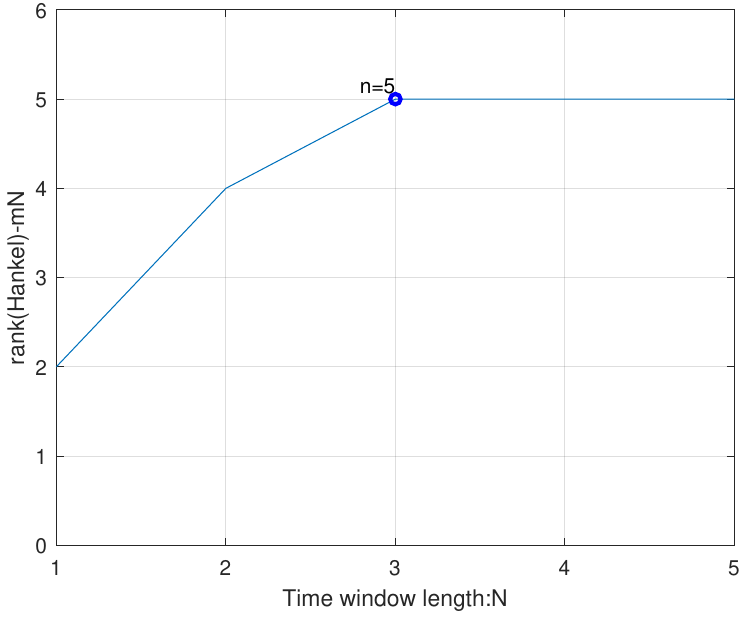}}
		\caption{Calculate the state dimension $n$.}
		\label{f2}
	\end{figure}
\end{example}

\begin{example}[SO comparison problems]\label{ex2}
	First, consider the third-order single-input single-output (SISO) aircraft system in \cite{Deng-DT-output}, with system matrices given by
	$$A=\begin{bmatrix}
	0.906488&0.0816012&-0.0005\\0.0741349&0.90121&-0.0007083\\0&0&0.132655
	\end{bmatrix}, \quad B=\begin{bmatrix}
		-0.00150808\\-0.0096\\0.867345
	\end{bmatrix},$$
	$$C=\begin{bmatrix}
		1&0&0
	\end{bmatrix},\quad Q=100,\quad R=1.$$
	The initial state $x_0$, the eigenvalues of $A-LC$, $A_{\epsilon}$ and the initial state $\eta_{0}^{\epsilon}$ of system (\ref{A_e}) are chosen identical to those in \cite{Deng-DT-output}. We initialize the random number generator in MATLAB using the rng(3) function. The excitation input to the system is selected as $u_t=\sum_{i=1}^{100}c_i\sin(a_it+b_i)$, where $a_i$, $b_i$ are generated from a uniform random distribution over $(0,2\pi)$, and $c_i$ is generated from a uniform random distribution over $(0,1)$. Available data are obtained under this setup. Similarly, the initial stabilizing controller for Algorithm \ref{al1} is set as $K^0=\mathbf{0}$, the initial matrix parameter for Algorithm \ref{al2} is set as $P^0=10^5I_9$, and the small threshold precision for both algorithms is set as $\epsilon=1$. The optimal output feedback gain obtained via Algorithm \ref{al1} is
	$$\begin{aligned}
		K_{PI}^*=[-0.1029,-0.2900,0.0377,-30.0794,264.5387,-286.4849,-37.1708,39.9982,32.9480].
	\end{aligned}$$
	It is very close to the true optimal output feedback gain given in \cite{Deng-DT-output}.
	Compared with the LS-based PI algorithm in \cite{Deng-DT-output}, which requires $5$ iterations and a running time of $0.017086$ seconds, Algorithm \ref{al1} only needs $4$ iterations to reduce the control gain difference $\Vert K^{i+1}-K^{i}\Vert$ to $9.2385\times 10^{-5}$, with a running time of $0.004024$ seconds. The optimal output feedback gain obtained via Algorithm \ref{al2} is
	$$\begin{aligned}
		K_{VI}^*=[-0.1029,-0.2900,0.0377,-30.0765,264.5137,-286.4590,-37.1673,39.9946,32.9448].
	\end{aligned}$$
	Compared with the LS-based VI algorithm in \cite{Deng-DT-output}, which requires $217$ iterations and a running time of $0.363406$ seconds, Algorithm \ref{al2} requires $185$ iterations to reduce the matrix parameter difference $\Vert P^{i+1}-P^{i}\Vert$ to $0.95764$, with a running time of $0.033872$ seconds. Thus, Algorithm \ref{al1} and Algorithm \ref{al2} are indeed more efficient.

	Next, randomly generate 50 third-order SISO-LTI systems with $Q=2$, $R=1$. Choose the eigenvalues of $A-LC$ as $[-0.7,0.6,0.8]^\top$. $A_{\epsilon}$ is taken as the controllable canonical form of $A-LC$. We initialize the random number generator in MATLAB using the rng(3) function. $x_0$ and $\eta_{0}^{\epsilon}$ follow uniform distributions over $[-0.5,0.5]$. The excitation input is selected as 
	$u_t=\sum_{i=1}^{100}c_i\sin(a_it+b_i)$, where $a_i$, $b_i$ are generated from a uniform random distribution over $(0,2\pi)$ and $c_i$ is generated from a uniform random distribution over $(0,1)$. Available data are obtained under this setup.
	Set $K^0=\mathbf{0}$ for PI algorithms, $P^0=10^3I_9$ for VI algorithms, and $\epsilon=0.01$. 
	Note that LS-based algorithms in \cite{Deng-DT-output} require regression matrices to have full row rank (where the full row rank property of the PI regression matrix can be derived from that of the VI regression matrix), while Algorithm \ref{al1} and Algorithm \ref{al2} focus on the full row rank property of $V_0$. Therefore, we first compare the minimum absolute singular value of the regression matrix in the LS-based VI algorithm and that of $V_0$, as shown in Figure \ref{f3}.
	\begin{figure}[h]
		\centerline{\includegraphics[width=0.45\textwidth]{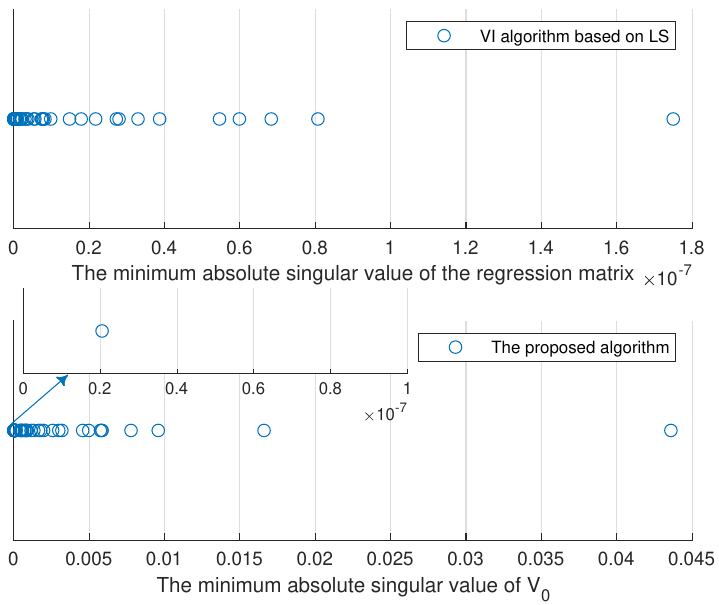}}
		\caption{Comparison of minimum absolute singular value.}
		\label{f3}
	\end{figure}	
	All the orders of magnitude corresponding to LS-based VI regression matrices are less than or equal to $10^{-7}$, whereas only $1$ value among those corresponding to $V_0$ have an order of magnitude less than $10^{-7}$. This indicates that regression matrices of LS-based algorithms may be closer to singularity, leading to slow convergence or even non-convergence of algorithms, and causing numerical instability issues. In contrast, the proposed Algorithm \ref{al1} and Algorithm \ref{al2} alleviate this problem to a certain extent. The average running times and average numbers of iterations for LS-based algorithms, Algorithm \ref{al1}, and Algorithm \ref{al2} are shown in Table \ref{ta1} and Table \ref{ta2}. These results
   further verify the complete data parameterization capability and high efficiency of the data matrix under the state parameterization constructed in Subsection \ref{section4_3}.
	\begin{table}[h]
		\centering
		\begin{tabular}{|c|c|c|}
			\hline
			 & LS-based PI & Algorithm \ref{al1} \\ \hline
			Average running time (s) &  0.0095 & 0.0022\\\hline
			Average number of iterations & 3.04 & 3.04\\\hline
		\end{tabular}
		\caption{Efficiency comparison of PI Algorithms.}
		\label{ta1}
	\end{table}
	\begin{table}[h]
		\centering
		\begin{tabular}{|c|c|c|}
			\hline
			 & LS-based VI & Algorithm \ref{al2} \\ \hline
			Average running time (s) & 0.2994 & 0.0068\\\hline
			Average number of iterations & 73.08 & 36.68 \\\hline
		\end{tabular}
		\caption{Efficiency comparison of VI Algorithms.}
		\label{ta2}
	\end{table}
\end{example}

\begin{example}[MO problems]\label{ex3}
	Consider the controllable and observable LTI system with $n=4$, $m=2$ and $p=2$, where
	\begin{equation}\label{con}
		\begin{split}
			&A=\begin{bmatrix}
				0.90031&-0.00015&0.09048&-0.00452\\
				-0.00015&0.90031&0.00452&-0.09048\\
				-0.09048&-0.00452&0.90483&-0.09033\\
				0.00452&0.09048&-0.09033&0.90483
			\end{bmatrix},\quad
		B=\begin{bmatrix}
			0.00468&-0.00015\\0.00015&-0.00468\\0.09516&-0.00467\\-0.00467&0.09516
		\end{bmatrix},\\
	&C=\begin{bmatrix}
		1&1&0&0\\
		0&1&0&0
	\end{bmatrix},\quad Q=I_2, \quad R=I_2.
		\end{split}
	\end{equation}
	Choose the eigenvalues of $A-LC$ as $[0.8994,-0.6,0.7,0]^\top$. $A_{\epsilon}$ is taken as the controllable canonical form of $A-LC$. We initialize the random number generator in MATLAB using the rng(1) function. $x_0$ and $\eta_{0}^{\epsilon}$ follow uniform distributions over $[-0.5,0.5]$. The excitation input is set as $u_t=\sum_{i=1}^{100}c_i\sin(a_it+b_i)$, where $a_i$, $b_i$ are generated from a uniform random distribution over $(0,2\pi)$ and $c_i$ is generated from a uniform random distribution over $(0,1)$. Available data are obtained under this setup. Then $n_{\zeta}=20$, $n_v=18$, and $Z_0$ does not have full row rank. The rank of the LS-based VI regression matrix in \cite{Deng-DT-output} is $152$, which is smaller than its number of rows $253$, thus the corresponding LS-based algorithms may not converge. Furthermore, for algorithms in \cite{Deng-DT-output}, the number of unknowns is $253$, and the required data amount is $T\geq253$. By contrast, for Algorithm \ref{al1} and Algorithm \ref{al2}, the projected $V_0$ must have full row rank, with the number of unknowns being $210$ and the required data amount $T\geq20$. 
	
	The small threshold precision of algorithms is set as $\epsilon=0.001$. For Algorithm \ref{al1}, set $K^0=\mathbf{0}$; the convergence and the input-output trajectories under the resulting output feedback gain $K_{PI}^*$ are shown in Figure \ref{f4} and Figure \ref{f5}. Algorithm \ref{al1} achieves a residual of $2.7537\times 10^{-6}$ after $5$ iterations. Noting the true optimal cost $x_0^\top P^{K_x^*}x_0=0.5506$ and the calculated optimal cost $\eta_0^\top[I_{n_v}, K_{PI}^{*\top}]\Theta^{K_{PI}^{*}}[I_{n_v},K_{PI}^{*\top}]^\top\eta_0=0.5506$, with a difference of $-3.9426\times 10^{-7}$, $K_{PI}^*$ is thus a near-optimal solution.
	\begin{figure}[H]
		\centerline{\includegraphics[width=0.45\textwidth]{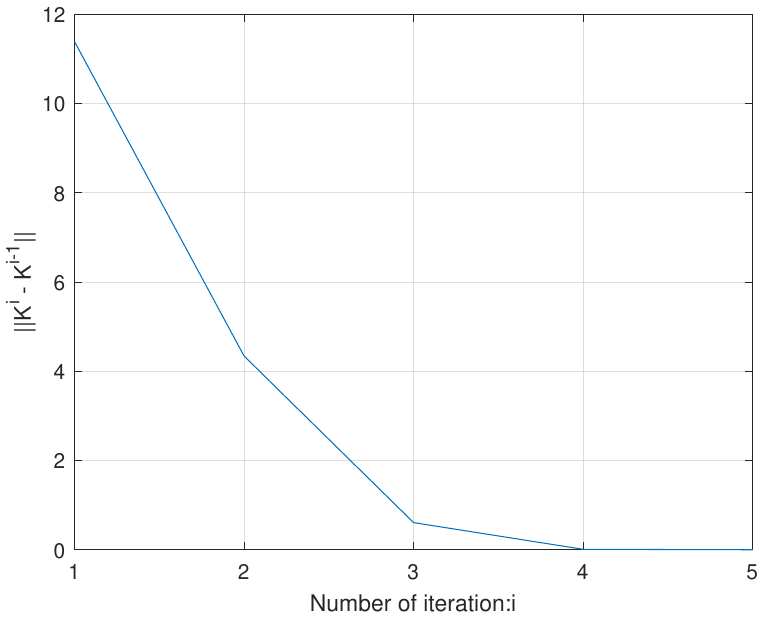}}
		\caption{Convergence of Algorithm \ref{al1}.}
		\label{f4}
	\end{figure}
	\begin{figure}[H]
		\centerline{\includegraphics[width=0.45\textwidth]{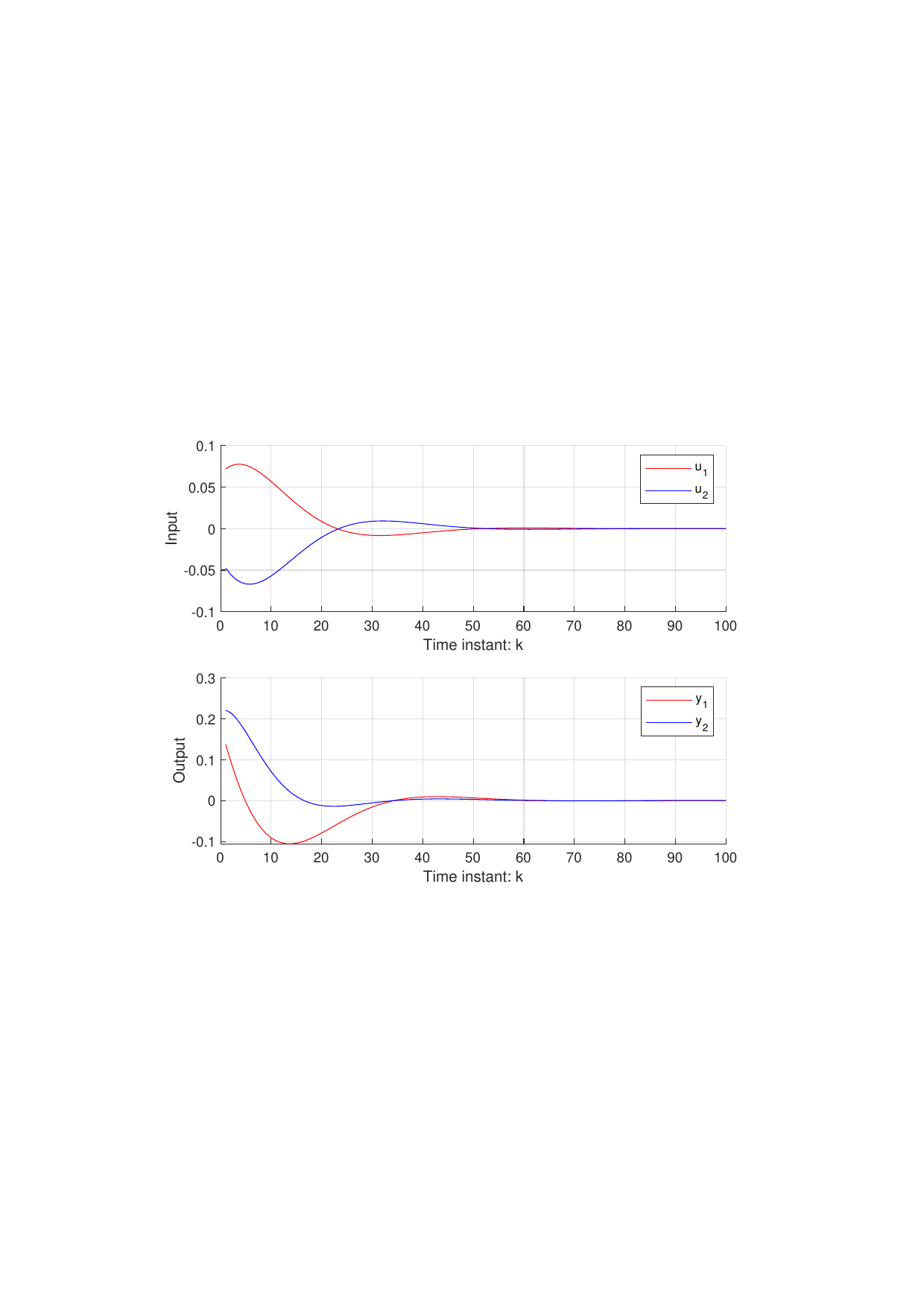}}
		\caption{ Input-output trajectories of Algorithm \ref{al1}.}
		\label{f5}
	\end{figure}
	For Algorithm \ref{al2}, set $P^0=10^3I_{18}$; the convergence and the input-output trajectories under the resulting output feedback gain $K_{VI}^*$ are shown in Figure \ref{f6} and Figure \ref{f7}. Algorithm \ref{al2} achieves a residual of $9.9408\times 10^{-5}$ after $105$ iterations. Noting the true optimal cost $x_0^\top P^{K_x^*}x_0=0.5506$ and the calculated optimal cost $\eta_0^\top[I_{n_v}, K_{VI}^{*\top}]\Theta^{K_{VI}^{*}}[I_{n_v},K_{VI}^{*\top}]^\top\eta_0=0.5506$ with a difference of $-5.0548\times 10^{-8}$, $K_{VI}^*$ is thus a near-optimal solution.
	\begin{figure}[H]
		\centerline{\includegraphics[width=0.45\textwidth]{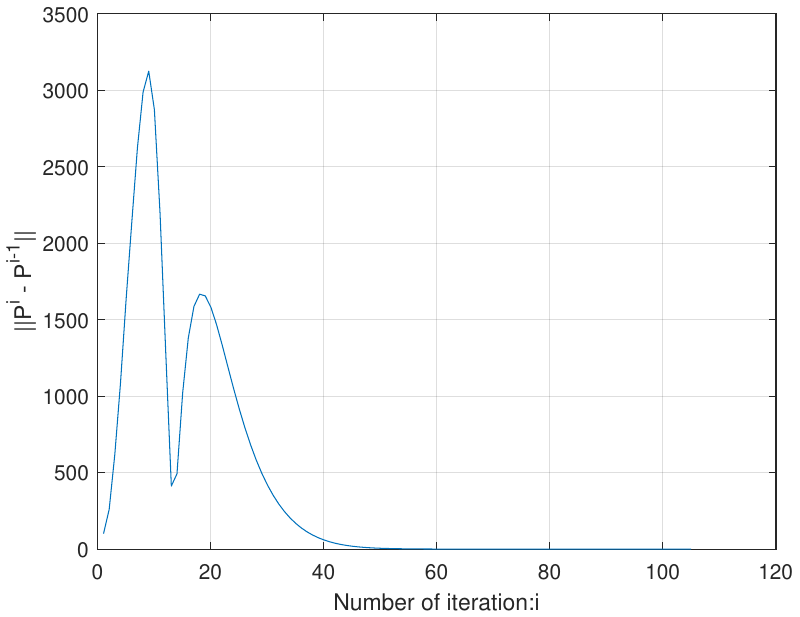}}
		\caption{Convergence of Algorithm \ref{al2}.}
		\label{f6}
	\end{figure}
	\begin{figure}[H]
		\centerline{\includegraphics[width=0.45\textwidth]{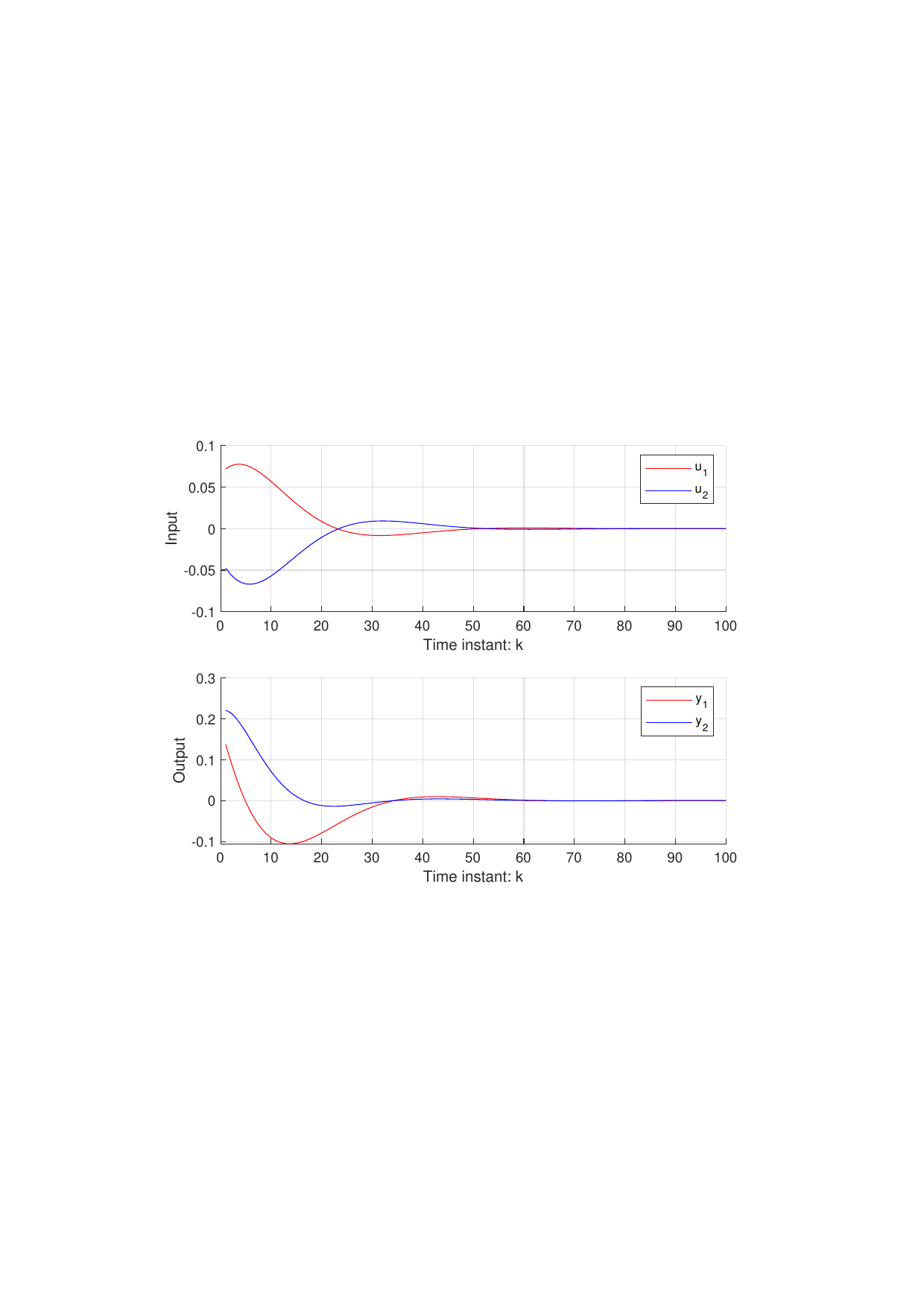}}
		\caption{ Input-output trajectories of Algorithm \ref{al2}.}
		\label{f7}
	\end{figure}

	Next, consider the uncontrollable but stabilizable and observable LTI system with $n=4$, $m=2$ and $p=2$, where
	$$A=\begin{bmatrix}
		0.3706&0.1537&0&0\\0.5123&0.3739&0&0\\0&0&0.5443&0\\0&0&0&0.7685
	\end{bmatrix},\quad B=\begin{bmatrix}
	0.1174&0.5487\\0.8643&0.8189\\0.3159&0.9594\\0&0
	\end{bmatrix},\quad C=\begin{bmatrix}
		1&1&1&0\\
		0&1&0&1
	\end{bmatrix}.$$
	The controllability matrix of this system has a rank of $3$. The remaining settings are the same as those in the aforementioned controllable MO example.  
	This setup ensures that $F$ and $V_0$ in this example have full row rank; thus, Algorithm \ref{al1} and Algorithm \ref{al2} are feasible even if this system is uncontrollable.
	Algorithm \ref{al1} achieves a residual of $2.7834\times 10^{-4}$ after $3$ iterations. Noting the true optimal cost $x_0^\top P^{K_x^*}x_0=0.1468$ and the calculated optimal cost $\eta_0^\top[I_{n_v}, K_{PI}^{*\top}]\Theta^{K_{PI}^{*}}[I_{n_v},K_{PI}^{*\top}]^\top\eta_0=0.1468$ with a difference of $2.1284\times 10^{-10}$, $K_{PI}^*$ is thus a near-optimal solution.
	Algorithm \ref{al2} achieves a residual of $9.2075\times 10^{-5}$ after $68$ iterations. Noting the true optimal cost $x_0^\top P^{K_x^*}x_0=0.1468$ and the calculated optimal cost $\eta_0^\top[I_{n_v}, K_{VI}^{*\top}]\Theta^{K_{VI}^{*}}[I_{n_v},K_{VI}^{*\top}]^\top\eta_0=0.1468$ with a difference of $-4.3531\times 10^{-6}$, $K_{VI}^*$ is thus a near-optimal solution. These results validate the conclusion regarding controllability presented in Subsection \ref{section5_2}.
    %%%有点重复？
\end{example}

\begin{example}[Robustness Problem]
Consider the same system (\ref{con}) as in the controllable MO problem in Example \ref{ex3} with identical user-defined parameters. Let the absolute upper bounds of noises
	$\{w_t\}$ and $\{e_t\}$ both be $W_{\max}$. When $W_{\max}=10^{-4}$, the data matrix $Z_0$ has full row rank. If $100$ trials are conducted to construct the regression matrices of LS-based VI in \cite{Deng-DT-output}, the average row rank is $221.58$, and none of them achieves full row rank. Thus, under the influence of noise, LS-based PI and VI algorithms also barely satisfy the feasibility conditions. In this example, we initialize the random number generator in MATLAB using the rng(75) function.
	If Algorithm \ref{al1} is directly run with $Z_0$, a residual of $4.5349\times 10^{-4}$ is achieved after $4$ iterations. Noting the true optimal cost $x_0^\top P^{K_x^*}x_0=1.9109$ and the calculated optimal cost $\eta_0^\top[I_{n_v}, K_{PI}^{*\top}]\Theta^{K_{PI}^{*}}[I_{n_v},K_{PI}^{*\top}]^\top\eta_0=1.9550$, the difference is $-0.0441$. If Algorithm \ref{al1} is run with $V_0$ (constructed via SVD and projection as described in Subsection \ref{section5_1}), a residual of $6.7282
\times 10^{-4}$ is achieved after $4$ iterations. Noting the calculated optimal cost $\eta_0^\top[I_{n_v}, K_{PI}^{*\top}]\Theta^{K_{PI}^{*}}[I_{n_v},K_{PI}^{*\top}]^\top\eta_0=1.9115$, the difference is $-6.1031\times10^{-4}$ from $x_0^\top P^{K_x^*}x_0$, which is a clear improvement. The convergence of Algorithm \ref{al1} corresponding to $Z_0$ and $V_0$ is shown in Figure \ref{f8}. If Algorithm \ref{al2} is directly run with $Z_0$, it may fail to converge. However, when run with $V_0$, Algorithm \ref{al2} converges. 
    
	\begin{figure}[h]
		\centerline{\includegraphics[width=0.45\textwidth]{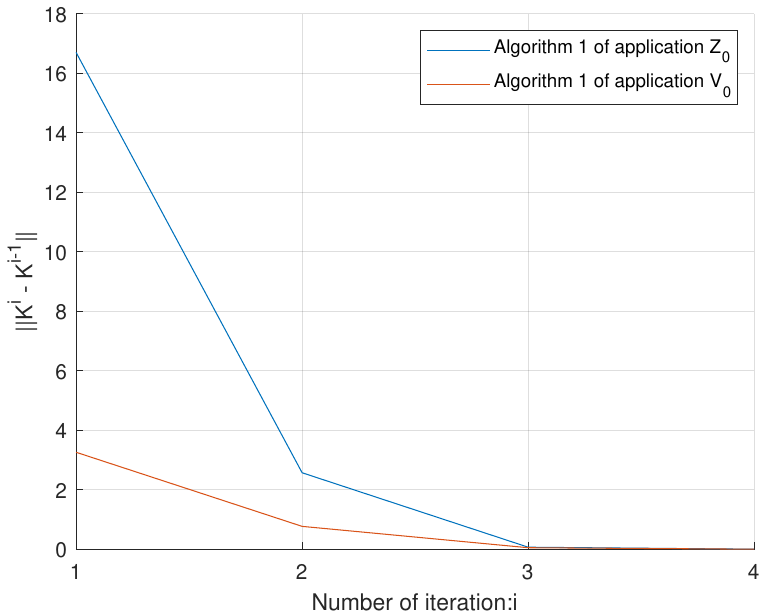}}
		\caption{The convergence comparison of Algorithm \ref{al1}.}
		\label{f8}
	\end{figure}

	Next, set $W_{\max}$ to $10^{-3}$, $10^{-4}$ and $10^{-6}$, respectively. Let Algorithm \ref{al1} with $V_0$ be run for $6$ iterations, and Algorithm \ref{al2} with $V_0$ be run $150$ iterations. Define $\Delta_{K}$ as the value of $\Vert K^{i+1}-K^{i}\Vert$ and $\Delta_{xPx}$ as the value of $x_0^\top P^{K_x^*}x_0-\eta_0^\top[I_{n_v}, K^{i+1\top}]\Theta^{i+1}[I_{n_v},K^{i+1\top}]^\top\eta_0$ obtained at the final iteration. The error results are shown in Table \ref{ta3} and Table \ref{ta4}.
	When $W_{\max}=10^{-3}$, the results generated by Algorithm \ref{al1} and Algorithm \ref{al2} have a certain small gap from those under exact data. However, when $W_{\max}$ is small, the results of Algorithm \ref{al1} and Algorithm \ref{al2} are very close to those under exact data.

\begin{table}[h]
	\centering
	\begin{tabular}{|c|c|c|c|}
		\hline
		$W_{\max}$ &$10^{-3}$  &  $10^{-4}$ & $10^{-6}$\\ \hline
		$\Delta_{K}$ & $2.5521\times 10^{-7}$ &$6.7282\times 10^{-4}$ &$6.8279\times 10^{-4}$\\\hline
		$\Delta_{xPx}$ & $-0.1323$ &  $-6.1031\times 10^{-4}$ &$6.2246\times 10^{-6}$\\\hline
	\end{tabular}
	\caption{Robustness of Algorithm \ref{al1} to different $W_{\max}$.}
	\label{ta3}
\end{table}
\begin{table}[h]
	\centering
	\begin{tabular}{|c|c|c|c|}
		\hline
		$W_{\max}$ &$10^{-3}$  &  $10^{-4}$ & $10^{-6}$\\ \hline
		$\Delta_{K}$  &$8.0905\times 10^{-9}$ & $8.835\times 10^{-9}$ &$4.3409\times 10^{-8}$\\\hline
		$\Delta_{xPx}$  &  $0.1498$& $0.0162$ &$1.6355\times 10^{-4}$\\\hline
	\end{tabular}
	\caption{Robustness of Algorithm \ref{al2} to different $W_{\max}$.}
	\label{ta4}
\end{table}
    
\end{example}

\section{Conclusion}\label{section7}	
%This work develops a generalized and efficient data-driven framework for solving the optimal output feedback control problem for LQR, with a focus on the theoretical foundations of data-parameterized closed-loop systems and controllers under state parameterization. The framework is amenable to extension to various optimal control problems, and its parameterization theory can be integrated with multiple data-driven approaches. Solving large-scale output feedback optimal control problems represents a future research direction.
This work develops a general and efficient data-driven framework for designing the optimal output feedback controllers for LQR problems in a model-free manner. Specifically, we focus on state parameterization, providing solid theoretical foundations for the complete data-parameterized closed-loop systems and output feedback controllers. The proposed algorithms fully exploit the inherent information and structure of data, exhibiting distinct advantages in data usage conditions and data volume requirements.
This framework is amenable to extension to various optimal control problems; meanwhile, the data parameterization theory presented in this paper can be deeply integrated with multiple data-driven approaches, offering new insights for research in related fields. However, it should be noted that state parameterization is essentially a redundant representation of unmeasurable states, and its application to high-dimensional systems may lead to increased computational complexity. Therefore, how to achieve model-free optimal output feedback control for LQR problems in high-dimensional scenarios remains a key research topic to be further explored in the future. Additionally, extending the proposed algorithms to non-restricted small-noise cases is also one of our core research directions for subsequent work.

\bibliographystyle{unsrt}
\bibliography{ref}           % and a bib file to produce the

\end{document}